\documentclass[11pt]{article}
\usepackage[top=1in, bottom=1in, left=1in, right=1in]{geometry}

\usepackage[linktocpage,colorlinks,linkcolor=blue,anchorcolor=blue,citecolor=blue,urlcolor=blue,pagebackref]{hyperref}
\usepackage{euscript,mdframed}
\usepackage{microtype,todonotes,relsize}
\usepackage{amsmath,amsthm}
\usepackage{algpseudocode}

\usepackage{accents}
\usepackage{comment,url,graphicx,relsize}
\usepackage{amssymb,amsfonts,amsmath,amsthm,amscd,dsfont,mathrsfs,mathtools,nicefrac, bm}
\usepackage{float,psfrag,epsfig,color,xcolor,url}
\usepackage{epstopdf,bbm,mathtools,enumitem}
\usepackage{subfigure}
\usepackage[ruled,vlined]{algorithm2e}
\usepackage{tablefootnote}
\graphicspath{{Plots/}}
\usepackage{diagbox}
\usepackage{tikz}
\usetikzlibrary{calc,patterns,angles,quotes}
\usepackage{makecell}
\usepackage{multirow}
\usepackage{booktabs}
\usepackage{cleveref}
\usepackage[normalem]{ulem} 

\def\R{\mathbb{R}} 

\newcommand{\E}{\mathbb{E}}

\providecommand{\argmin}{\mathop\mathrm{arg min}}
\providecommand{\dom}{\mathop\mathrm{dom}}

\newcommand{\prox}{\text{prox}}

\ifdefined\nonewproofenvironments\else
\ifdefined\ispres\else
\renewenvironment{proof}{\noindent\textbf{Proof.}\hspace*{.3em}}{\qed\\}
\newenvironment{proof-sketch}{\noindent\textbf{Proof Sketch}
  \hspace*{0.em}}{\qed\bigskip\\}
\newenvironment{proof-idea}{\noindent\textbf{Proof Idea}
  \hspace*{0.em}}{\qed\bigskip\\}
\newenvironment{proof-of-lemma}[1][{}]{\noindent\textbf{Proof of Lemma {#1}.}
  \hspace*{0.em}}{\qed\\}
\newenvironment{proof-of-corollary}[1][{}]{\noindent\textbf{Proof of Corollary {#1}.}
  \hspace*{0.em}}{\qed\\}
\newenvironment{proof-of-theorem}[1][{}]{\noindent\textbf{Proof of Theorem {#1}.}
  \hspace*{0.em}}{\qed\\}
\newenvironment{proof-attempt}{\noindent\textbf{Proof Attempt}
  \hspace*{0.em}}{\qed\bigskip\\}

\fi
\newtheorem{theorem}{Theorem}[section]
\newtheorem{lemma}{Lemma}[section]

\newtheorem{proposition}{Proposition}[section]
\newtheorem{assumption}{Assumption}[section]
\newtheorem{remark}{Remark}[section]

\newtheorem{definition}{Definition}[section]

\usetikzlibrary{calc}

\allowdisplaybreaks
\usepackage[authoryear,round]{natbib}
\renewcommand*{\backref}[1]{\ifx#1\relax \else Page #1 \fi}
\renewcommand*{\backrefalt}[4]{%
  \ifcase #1 \footnotesize{(Not cited.)}%
  \or        \footnotesize{(Cited on page~#2.)}%
  \else      \footnotesize{(Cited on pages~#2.)}%
  \fi
}

\definecolor{orangePumpkin}{RGB}{211,84,0}
\definecolor{forestgreen}{rgb}{0.13, 0.55, 0.13}

\newcommand*{\colorboxed}{}
\def\colorboxed#1#{%
  \colorboxedAux{#1}%
}
\newcommand*{\colorboxedAux}[3]{%
  \begingroup
    \colorlet{cb@saved}{.}%
    \color#1{#2}%
    \boxed{%
      \color{cb@saved}%
      #3%
    }%
  \endgroup
}
\numberwithin{equation}{section}
\usepackage{xcolor} 
\usepackage{marginnote}

\newcommand{\todol}[2][]{{%
 \let\marginpar\marginnote
 \reversemarginpar
 \renewcommand{\baselinestretch}{0.8}%
 \todo[color=yellow]{#2}}}

\newcommand{\xhat}{\hat{x}}
\newcommand{\xkp}{x^{k+1}}
\newcommand{\xk}{x^k}
\newcommand{\xhk}{\xhat^k}

\newcommand{\xik}{\xi_k}

\newcommand{\rr}{\mathbb{R}}

\renewcommand{\(}{\left(}
\renewcommand{\)}{\right)}
\renewcommand{\[}{\left[}
\renewcommand{\]}{\right]}

\newcommand{\la}{\left\langle}
\newcommand{\ra}{\right\rangle}

\newcommand{\Cbar}{\bar{C}}

\newcommand{\benv}[1]{#1_\lambda^\omega}
\newcommand{\bprox}[1]{\prox_{\lambda,#1}^\omega}

\newcommand{\sfx}{\widetilde{f_x}}
\newcommand{\sfxk}{\widetilde{f}_{\xk}}
\newcommand{\sfxkp}{\widetilde{f}_{\xkp}}

\newcommand{\sFx}{\widetilde{F_x}}
\newcommand{\sFxk}{\widetilde{F}_{\xk}}

\newcommand{\benvF}{\benv{F}}
\newcommand{\bproxF}{\bprox{F}}

\newcommand{\benvvp}{\benv{\varphi}}
\newcommand{\bproxvp}{\bprox{\varphi}}

\newcommand{\dsum}{\sum\limits}
\newcommand{\dsymomelam}{D^{\hbox{\scriptsize sym}}_{\omega, \lambda}}
\newcommand{\dsymphic}[1]{D^{\hbox{\scriptsize sym}}_{\omega, #1}}
\newcommand{\sdome}{\widetilde{D}_\omega}
\newcommand{\some}{\widetilde{\omega}}

\newcommand{\sml}{\widetilde{L}}

\newcommand{\xbar}{\bar{x}}

\newcommand{\smf}[1]{\widetilde{f}_{#1}}

\newcommand{\kbar}{\bar{k}}
\newcommand{\xkbar}{x^{\kbar}}

\newcommand{\snome}{\widetilde{\nabla}\omega}
\newcommand{\Bcal}{\mathcal{B}}

\newcommand{\bcalk}{\Bcal_k}
\newcommand{\xiki}{\xi_{k,i}}

\title{On Nonsmooth and Relatively Weakly Convex Minimization}

\author{
{Chen Jiang} \thanks{Department of Industrial and Systems Engineering, University of Minnesota.  \texttt{jian0649@umn.edu}}
\and
{Jiawen Bi} \thanks{Department of Industrial and Systems Engineering, University of Minnesota.  \texttt{bi000050@umn.edu}}
\and
{Shuzhong Zhang} \thanks{Department of Industrial and Systems Engineering, University of Minnesota.   \texttt{zhangs@umn.edu}}
}
\date{}
\begin{document}

\maketitle
\begin{abstract}
    Composite optimization 
    plays a central role in 
    modern machine learning and signal processing, as it offers 
    a natural balance between data fidelity and structural properties. In this paper, we study composite optimization
    in the setting where both components are nonsmooth and nonconvex. We start with a deterministic Bregman proximal subgradient method that converges under subgradient upper-bound conditions. This approach relaxes the standard requirement on the convexity of the regularization term, thus accommodating a broader range of applications. To extend this to the stochastic regime, we develop a model-based minimization method under a relative Lipschitz condition and establish a convergence rate of $\mathcal{O}(\varepsilon^{-4})$. We also extend the framework with convergence guarantees to the setting where the distance generating function and its gradient are accessible only through a stochastic oracle.
\end{abstract}


\section{Introduction}\label{sec:1}

In this work, we study composite optimization problems of the form
\begin{align}
\label{eq:pro}
\min_{x\in\R^d} F(x):=f(x)+r(x),
\end{align}
where $f$ and $r$ are both relatively weakly convex (to be specified later), possibly nonsmooth. 
Such problems appear in machine learning, signal and image processing, where $f$ usually represents a loss or data fidelity term, while $r$ induces additional structures on the solution, such as sparsity, low-rankedness, robustness, or feasibility constraints \citep{shen2018nonconvex,chen2014convergence,huo2023minimization}. A standard approach to solving such composite problems is 
the so-called proximal gradient method 
when $f$ is differentiable, which updates the iterate as follows: 
\begin{align}
x_{k+1} = \argmin_{x} \left\{\langle \nabla f(x_k),x-x_k\rangle+r(x)
+\frac{1}{2\eta_k}\|x-x_k\|^2\right\}.
\label{eq:prox_grad_formu}
\end{align} Classical convergence theory for this approach has primarily been developed under the assumption that $f$ is smooth and $r$ is convex \citep{beck2009fast,nesterov2013gradient,parikh2014proximal}. However, many real-world models of interest violate these assumptions. In particular, the loss term $f$ may be nonsmooth, so the gradient $\nabla f(x_k)$ in \eqref{eq:prox_grad_formu} is not available, and it must be replaced by a subgradient $g_k\in \partial f(x_k)$ \citep{clarke1990optimization,rockafellar1997convex}. Furthermore, the structural term $r$ can be nonconvex in sparse optimization and statistical learning, where penalties such as $\ell_p$ regularization with $0<p<1$ and SCAD are used to reduce the bias of convex regularization and promote stronger sparsity \citep{wang2010sparse,fan2001variable}. Difference-of-convex (DC) regularization provides another application of nonconvex structural terms \citep{esser2013method,yin2015minimization,huo2023minimization}. 

Furthermore, the Euclidean proximal term in \eqref{eq:prox_grad_formu} may not reflect the geometry of some structured problems. In these settings, Bregman distances offer a naturally extended proximity measure. Let $\omega:\R^d\to\R$ be a continuously differentiable function that is $1$-strongly convex with respect to a norm $\|\cdot\|$. The Bregman distance generated by $\omega$ is defined as
\begin{align*}
D_\omega(y,x)
:=
\omega(y)-\omega(x)-\langle \nabla\omega(x),y-x\rangle.
\end{align*}
Replacing the Euclidean proximal term in \eqref{eq:prox_grad_formu} by $D_\omega$ leads to the Bregman proximal gradient or subgradient update
\begin{align}
x_{k+1}
\in
\argmin_{x\in\R^d}
\left\{
\langle g_k,x-x_k\rangle
+r(x)
+\frac{1}{\eta_k}D_\omega(x,x_k)
\right\},\nonumber
\end{align}
where $g_k=\nabla f(x_k)$ in the differentiable case and $g_k\in\partial f(x_k)$ in the nonsmooth case. Bregman proximal methods have been studied under several structural assumptions, including relative smoothness and smooth adaptability \citep{bauschke2017descent,lu2018relatively,hanzely2021accelerated,bolte2018first}. 

For nonsmooth and nonconvex objectives, we use the Bregman Moreau envelope and the associated Bregman proximal mapping of the full objective $F$. For a function $F:\R^d\to\R\cup\{+\infty\}$ and a parameter $\lambda>0$, these are given by
\begin{align}
F_\lambda^\omega(x):=
\min_{y\in\R^d}
\left\{
F(y)+\frac{1}{\lambda}D_\omega(y,x)
\right\},\qquad 
\label{eq:def_of_me}
\operatorname{prox}^{\omega}_{\lambda F}(x)
:=\argmin_{y\in\R^d}\left\{F(y)+\frac{1}{\lambda}D_\omega(y,x)\right\}.
\end{align}
For a general nonconvex $F$, the proximal subproblem in \eqref{eq:def_of_me} need not have a unique minimizer. Relative weak convexity as defined below, provides a sufficient condition for uniqueness.
\begin{definition}[Relative Weak Convexity \citep{zhang2018convergence}]
\label{def:rwc}
Let $\omega:\R^d\to\R$ be a continuously differentiable and strongly convex distance generating function. A proper lower-semicontinuous function $\varphi:\R^d\to\R$ is said to be $\rho\omega$-relatively weakly convex if the function $\varphi+\rho\omega$ is convex. Equivalently, $\varphi$ is $\rho\omega$-relatively weakly convex if for every $x,z\in\R^d$ and every subgradient $g\in\partial\varphi(z)$, the following inequality holds:
\begin{align}
\varphi(x) \geq \varphi(z)+\langle g,x-z\rangle-\rho D_\omega(x,z).\nonumber
\end{align}
\end{definition}

If $F$ is $\rho\omega$-relatively weakly convex and $\lambda\in(0,1/\rho)$, then the objective in \eqref{eq:def_of_me} is strongly convex in $y$, and so $\operatorname{prox}^{\omega}_{\lambda F}(x)$ is well-defined and is in fact single-valued. For fixed $\lambda>0$, 
denote $\hat x^k:=\operatorname{prox}_{\lambda F}^{\omega}(x^k)$.
Following \cite{zhang2018convergence}, we measure the stationarity by the symmetrized Bregman distance between $x$ and its Bregman proximal point in the following definition.
\begin{definition}[Bregman Stationarity Measure \citep{zhang2018convergence}]
\label{def:bregman_stationarity}
For a given parameter $\lambda \in (0, 1/\rho)$, the stationarity of $F$ at a point $x \in \R^d$ is measured by the symmetrized Bregman distance between $x$ and its Bregman proximal point:
\begin{align}
\dsymomelam(\operatorname{prox}^{\omega}_{\lambda F}(x),x):=
\frac{1}{\lambda^2}\bigl(D_\omega(\operatorname{prox}^{\omega}_{\lambda F}(x),x)
+D_\omega(x,\operatorname{prox}^{\omega}_{\lambda F}(x))\bigr).
\label{eq:def_of_bsm}
\end{align}
\end{definition}
To align with standard stationary measure, we call $x$ an $\varepsilon$-stationary point if $\dsymomelam(\operatorname{prox}^{\omega}_{\lambda F}(x),x)\leq\varepsilon^2.$
This quantity vanishes exactly when $x=\operatorname{prox}^{\omega}_{\lambda F}(x)$, which gives stationarity of $F$ through the optimality condition of \eqref{eq:def_of_me}. Moreover, as discussed in \cite{zhang2018convergence},  this measure dominates the squared proximal residual
\begin{align}
\dsymomelam(\operatorname{prox}^{\omega}_{\lambda F}(x),x)\geq\frac{1}{\lambda^2}
\left\|x-\operatorname{prox}^{\omega}_{\lambda F}(x)\right\|^2.
\nonumber
\end{align}

Existing analyses cover two closely related settings. The
stochastic Bregman model-based framework of
\citet{davis2018stochastic} allows the full composite model
$f_x(\cdot;\xi)+r(\cdot)$ to be relatively weakly convex and
establishes convergence in a one-sided Bregman proximal
distance. On the other hand, \citet{zhang2018convergence}
analyzes the symmetrized Bregman stationarity measure
for stochastic mirror descent with a convex regularizer. Our
contribution is to establish guarantees in the symmetrized
Bregman stationarity measure for general stochastic
model-based updates while retaining a relatively weakly convex
regularizer in the subproblem. We also develop alternative
subgradient and model-growth conditions and extend the analysis
to the setting where the distance generating function itself is
accessed through stochastic oracles.

\subsection{Contributions}\label{subsec:contrib}
This paper develops Bregman proximal methods for composite problems $F=f+r$, where both components may be nonsmooth and nonconvex and are relatively weakly convex with respect to the same distance generating function. Our contributions are as follows.

\begin{itemize}
\item We study a deterministic Bregman proximal subgradient method that retains the relatively weakly convex regularizer $r$ in the proximal subproblem. Under three alternative subgradient growth conditions, we derive a nonasymptotic convergence guarantee for the Bregman stationarity measure and an
$\mathcal O(\varepsilon^{-4})$ iteration complexity to locate an
$\varepsilon$-stationary point.

\item We extend the deterministic analysis to a stochastic model-based method with exact Bregman geometry. The method replaces $f$ by a stochastic model while preserving the composite structure of the subproblem. Under relative weak convexity and a relative Lipschitz condition, we establish an $\mathcal O(\varepsilon^{-4})$ iteration complexity to locate an expected $\varepsilon$-stationary point.

\item We further consider the setting in which the distance generating function and its gradient are also accessed through stochastic oracles. Using minibatch estimates of both the stochastic model and the Bregman distance, we derive an explicit stationarity bound and an $\mathcal O(\varepsilon^{-4})$ iteration complexity to locate an expected $\varepsilon$-stationary point under the additional minibatch condition in \Cref{thm:model-sbmm}.
\end{itemize}

\subsection{Literature Review and Related Work}

\paragraph{Proximal gradient and proximal subgradient methods.}
Proximal gradient methods are classical for composite optimization problems where the objective is written as the sum of a loss term and a structural regularizer. They originate from forward backward splitting methods for monotone operator problems \citep{passty1979ergodic,bruck1977weak} and were later developed for convex optimization \citep{fukushima1981generalized,combettes2005signal}. For convex composite optimization with a smooth loss and convex regularizer, proximal gradient methods achieve the standard $\mathcal O(\varepsilon^{-1})$ complexity in objective accuracy, while accelerated proximal gradient methods achieve the optimal $\mathcal O(\varepsilon^{-1/2})$ complexity \citep{nesterov2013gradient,beck2009fast,parikh2014proximal}. When the loss term is nonsmooth but convex, the gradient can be replaced by a subgradient, leading to proximal subgradient methods \citep{rockafellar1997convex}. For nonsmooth nonconvex functions, generalized subdifferentials provide the corresponding first order objects \citep{clarke1990optimization}. Recent work has further studied proximal subgradient type methods for weakly convex objectives and established convergence guarantees in Moreau envelope based stationarity measures \citep{davis2019stochastic,asi2019importance,deng2021minibatch,zhu2023unified,gao2024stochastic}. 

\paragraph{Bregman proximal methods.}
A natural extension of proximal gradient methods replaces the Euclidean distance in the proximal mapping by a Bregman distance \citep{bregman1967relaxation}. The early Bregman proximal methods were studied in \citet{censor1992proximal,teboulle1992entropic,chen1993convergence,eckstein1993nonlinear}. More recent Bregman proximal gradient methods show that the global Lipschitz gradient assumption on the differentiable term can be replaced by assumptions stated relative to the Bregman distance \citep{bauschke2017descent,bolte2018first}. Several variants have also been proposed, including Bregman proximal gradient methods with extrapolation and inertial terms \citep{zhang2019bregman,mukkamala2020convex}, as well as other methods that further weaken Euclidean smoothness requirements \citep{reem2019telescopic,zhao2022two,zhu2021level}. 

\paragraph{Stochastic proximal and model based methods.}
The stochastic approximation scheme dates back to \citet{robbins1951stochastic}, and stochastic proximal subgradient methods combine stochastic subgradient steps with proximal updates for the regularizer. Classical analyses usually assume either convexity or smoothness \citep{xiao2014proximal,defazio2014saga,lan2018optimal,allen2018katyusha,allen2016variance,ghadimi2013stochastic,lei2017non,reddi2016stochastic}. More recent work establishes stationarity guarantees for weakly convex stochastic objectives and relaxes global Lipschitz assumptions \citep{davis2019stochastic,asi2019importance,deng2021minibatch,zhu2023unified,gao2024stochastic}. A related framework is stochastic model based minimization, where each iteration minimizes a stochastic local model plus a proximal term \citep{davis2019stochastic}. This framework covers stochastic proximal point, stochastic proximal subgradient, and stochastic prox linear methods \citep{asi2019stochastic,davis2019proximally,drusvyatskiy2018error,zhang2022stochastic}. In the Bregman stochastic setting, \citet{davis2018stochastic} allows the full composite model to be relatively weakly convex and establishes a one-sided Bregman proximal bound, whereas \citet{zhang2018convergence} analyzes a symmetrized Bregman stationarity measure for stochastic mirror descent with a convex regularizer. Other recent work has studied differentiable relatively smooth objectives and related stochastic Bregman methods \citep{ding2025nonconvex,fatkhullin2024taming, wang2024bregman}.

\subsection{Notation}

Throughout, $\|\cdot\|$ and $\langle \cdot,\cdot\rangle$ denote the Euclidean norm and inner product on $\R^d$. For a proper function $h:\R^d\to\R\cup\{+\infty\}$, its effective domain is denoted by
$\dom h:=\{x\in\R^d:h(x)<\infty\}$.
A function $\omega:\R^d\to\R$ is $\sigma$-strongly convex if
$\omega-\frac{\sigma}{2}\|\cdot\|^2$ is convex, and is $L$-smooth if $\nabla\omega$ is $L$-Lipschitz continuous. We write $D_\omega(y,x)$ for the Bregman distance generated by $\omega$. For a function $h$, $\partial h(x)$ denotes its subdifferential.  If $h$ is $\rho\omega$-relatively weakly convex, then $\partial h(x)=\partial(h+\rho\omega)(x)-\rho\nabla\omega(x).$ We use $\E[\cdot]$ for expectation and $\E_k[\cdot]$ for conditional expectation with respect to the history up to iteration $k$.


\section{Bregman Proximal Subgradient Method}\label{sec:BPG}

In this section, we study the deterministic composite problem
\begin{align*}
\min_{x\in\R^d}\ F(x):=f(x)+r(x).
\end{align*}
We focus on the case where both $f$ and $r$ may be nonsmooth and nonconvex. This setting is not covered by the standard Bregman proximal subgradient theory, which usually relies on the convexity of the regularizer term $r$.
We consider the Bregman proximal subgradient update
\begin{align}
\label{eq:bpg}
x^{k+1}
\in
\argmin_{x\in\R^d}
\left\{
\langle g_k,x-x^k\rangle
+r(x)
+\frac{1}{t_k}D_\omega(x,x^k)
\right\},
\end{align}
where $g_k\in\partial f(x^k)$. The method uses only a subgradient of $f$, while the possibly nonconvex regularizer $r$ is handled by the Bregman proximal step. The goal of this section is to prove convergence of this method under relative weak convexity assumptions.
Specifically, we assume that $f$ and $r$ are $\rho_f\omega$- and $\rho_r\omega$-relatively weakly convex with respect to the same distance generating function $\omega$.
If the two components are initially relatively weakly convex with respect to different distance generating functions $\omega_1$ and $\omega_2$, respectively, one may take $\omega:=\omega_1+\omega_2$, since both components remain relatively weakly convex with respect to this common distance generating function. Consequently, the full objective $F=f+r$ is $\rho_F\omega$-relatively weakly convex with $\rho_F:=\rho_f+\rho_r$. 
Under this assumption and the subgradient conditions stated below, we prove convergence in the Bregman stationarity measure defined in \eqref{eq:def_of_bsm}. Under additional subgradient growth conditions, we prove that the Bregman proximal subgradient method requires $\mathcal O(\varepsilon^{-4})$ iterations to locate an $\varepsilon$-stationary point.

The rest of this section is organized as follows. We first introduce the assumptions used in the deterministic analysis, then give the formal Bregman proximal subgradient algorithm, and finally state the convergence theorem.

\subsection{Preliminaries and Assumptions}

We state here the assumptions used in the deterministic analysis. The first assumption is the relative weak convexity condition on the two components of the objective.

\begin{assumption}\label{2assum:rwc}
The functions $f:\R^d\to\R$ and
$r:\R^d\to\R$ are proper and lower semicontinuous.
Moreover, $f$ and $r$ are $\rho_f\omega$-RWC and
$\rho_r\omega$-RWC, respectively, where
$\rho_f,\rho_r>0$ and $\omega:\R^d\to\R$ is continuously
differentiable and $1$-strongly convex. We denote $\rho_F:=\rho_f+\rho_r$, then the full objective $F=f+r$ is $\rho_F\omega$-relatively weakly convex. Also assume that $F^*:=\inf_{x\in\R^d}F(x)>-\infty.$
\end{assumption}

Requiring $\omega$ to be $1$-strongly convex does not reduce relative weak convexity to ordinary weak convexity. Indeed, every $\rho$-weakly convex function is $\rho\omega$-relatively weakly convex, since $\omega-\frac12\|\cdot\|^2$ is convex, while the converse need not hold. For example, on $\R$, let $\omega(x)=\frac12x^2+x^4$ and $\varphi(x)=-x^4$. Then $\varphi+\omega=\frac12x^2$ is convex, so $\varphi$ is $\omega$-relatively weakly convex, whereas $\varphi''(x)=-12x^2$ is unbounded below, and hence $\varphi$ is not weakly convex in the Euclidean sense. This framework also includes DC functions, since if $h_1$ and $h_2$ are convex and $L\omega-h_2$ is convex, then $h_1-h_2$ is $L\omega$-relatively weakly convex. A representative example covered by \Cref{2assum:rwc} is the SCAD-regularized sparse learning model discussed in the introduction \citep{fan2001variable},
\begin{align*}
\min_{x\in\R^d}\ f(x)+\sum_{j=1}^d p_{\mathrm{SCAD}}(x_j),
\end{align*}
where $f$ is a convex, possibly nonsmooth, loss. With the Euclidean distance generating function $\omega(x)=\frac12\|x\|^2$, the function $f$ is $\rho_f\omega$-RWC for any $\rho_f>0$. Moreover, for the standard SCAD shape parameter $a>2$,
\begin{align*}
\sum_{j=1}^d p_{\mathrm{SCAD}}(x_j)+\frac{1}{2(a-1)}\|x\|^2
\end{align*}
is convex, and hence the SCAD regularizer is $\frac{1}{a-1}\omega$-RWC. Therefore, this model falls directly within the setting of \Cref{2assum:rwc}, while allowing a nonsmooth loss and a nonconvex regularizer.

The convergence analysis also requires an upper bound on the subgradient terms. We include three alternatives in the following \Cref{2assum:ub}. The first bounds the subgradients of the full objective $F$, the second allows the subgradient bound to grow with the Bregman Moreau envelope value, and the third bounds the subgradients of $f$.

\begin{assumption}\label{2assum:ub}
    Let $g_f(x) \in \partial f(x)$ and $g_F(x)\in\partial F(x)$ denote the subgradients of $f$ and $F$, respectively. We assume at least one of the following conditions holds for all $x \in \R^d$:
\begin{enumerate}[label={\textbf{A\arabic*:}}, ref={Assumption \ref{2assum:ub}.A\arabic*}]
    \item \label{2assump:comp-ubF} \textbf{Bounded subgradient of $F$:} There exists a constant $L_F>0$ such that $\|g_F(x)\|\leq L_F$ for all $g_F(x)\in \partial F(x)$. 
    \item \label{2assump:comp-ubme} \textbf{Moreau Envelope bound:} For any given $\lambda$, there exist positive scalars $\alpha_1$ and $\beta_1$ such that for all $g_F(x)\in \partial F(x)$,
	\begin{align*}
		\|g_F(x)\|\leq \sqrt{\alpha_1 [F_\lambda^\omega(x)-F^*]+\beta_1},
	\end{align*}
	where $F^*:=\inf_{x\in\R^d}F(x)$ and $F_\lambda^\omega$ is the Bregman Moreau envelope defined in \eqref{eq:def_of_me}.
    \item \label{2assump:comp-ubf} \textbf{Bounded subgradient of $f$:} There exists a constant $L_f>0$ such that $\|g_f(x)\|\leq L_f$ for all $g_f(x)\in \partial f(x)$.  
\end{enumerate}
\end{assumption}

Each alternative in \Cref{2assum:ub}, together with the additional condition stated in the main body, is sufficient for the convergence result. In the next subsection, we give the Bregman proximal subgradient method and then prove its convergence under \Cref{2assum:rwc} together with any one of the three alternatives in \Cref{2assum:ub}.

\subsection{Algorithm BPG}

We present here the deterministic Bregman proximal subgradient method. At each iteration, the method chooses a subgradient of $f$ at the current point and then solves a Bregman proximal subproblem involving the regularizer term $r$.

\begin{algorithm}[ht]
\caption{\textbf{B}regman \textbf{P}roximal sub\textbf{G}radient (BPG) method} \label{alg:comp_bpg}
{\bf Input:} Initial point $x^0\in\R^d$, step sizes $\{t_k\}_{k\geq0}$, number of iterations $K$.
\begin{algorithmic}[1]
\item {\bf for} {$k=0,1,\ldots, K-1$} {\bf do}
\item \quad Choose $g_k\in\partial f(x^k)$.
\item \quad Set
\begin{align}
\label{eq:comp-alg-update}
x^{k+1}
\in
\argmin_{x\in\R^d}
\left\{
\langle g_k,x-x^k\rangle
+r(x)
+\frac{1}{t_k}D_\omega(x,x^k)
\right\}.
\end{align}
\item {\bf end for}
\end{algorithmic}
{\bf Return} $\{x^k\}_{k=0}^{K}$.
\end{algorithm}

With \Cref{alg:comp_bpg}, we now study its convergence under \Cref{2assum:rwc} and \Cref{2assum:ub}. The convergence guarantee is formulated in terms of the Bregman stationarity measure defined in \eqref{eq:def_of_bsm}. Since \Cref{2assum:ub} contains three alternative conditions, each condition leads to a corresponding form of the convergence bound. The next subsection states these bounds together and introduces the one-step estimates used in their analysis.

\subsection{Convergence Analysis}

We now state the deterministic convergence result in the following \Cref{2thm:main_cov} for \Cref{alg:comp_bpg}. The first two cases rely on conditions that control subgradients of the full objective $F$, whereas the third case requires only bounded subgradients of $f$. This difference leads to distinct convergence bounds under the three alternatives in \Cref{2assum:ub}, but all three bounds yield an $\mathcal O(\varepsilon^{-4})$ iteration complexity for finding an $\varepsilon$-stationary point measured by \eqref{eq:def_of_bsm}.

\begin{theorem}\label{2thm:main_cov}
    Let $\{x^k\}_{k\geq0}$ be the iterates generated by \Cref{alg:comp_bpg} and let $K$ be a positive integer. Suppose that \Cref{2assum:rwc} holds. Then we have the following convergence result:
	\begin{enumerate}[label=(\alph*)]
		\item \textbf{Convergence under \ref{2assump:comp-ubF}:} For any $\lambda\in (0, \frac1{\rho_F})$ and step size sequence $\{t_k\}_{k\geq 0}\subset(0, \lambda]$, we have
		\begin{align}\label{eq:comp-estimate-ubF}
				\min_{0\le k \le K-1}\dsymomelam(\xhk, \xk) \leq \dfrac{\benvF(x^0)-F^*+\dfrac{L_F^2}{2\lambda(1-\lambda\rho_r)^2}\dsum_{k=0}^{K-1}t_k^2}{(1-\lambda\rho_F)\dsum_{k=0}^{K-1}t_k}.
		\end{align}
		In particular, if we use a constant step size $t_k= \min\left\{\dfrac{1}{2\rho_F}, \dfrac{1-\rho_r/(2\rho_F)}{L_F}\sqrt{\dfrac{\delta}{\rho_FK}}\right\}$ for some $\delta\geq F_{1/(2\rho_F)}^\omega(x^0)-F^*$, then it holds that
		\begin{align}\label{eq:comp-complexity-ubF}
				\min_{0\le k\le K-1}\dsymphic{\frac1{2\rho_F}}(\xhk, \xk) \leq 8\max\left\{\dfrac{\rho_F\delta}{K}, L_F\sqrt{\dfrac{\rho_F\delta}{K}}\right\}.
		\end{align}

        \item \textbf{Convergence under \ref{2assump:comp-ubme}:} For any $\lambda\in (0, \frac1{\rho_F})$ and $\{t_k\}_{k\geq 0}\subset(0, \lambda]$ such that $\sum_{k=0}^{K-1} t_k^2\leq C$ for some $C>0$, we have 
		\begin{align}\label{eq:comp-estimate-ubME}
				\min_{0\le k\le K-1}\dsymomelam(\xhk, \xk) \leq \dfrac{\benvF(x^0)-F^*+\dfrac{(\alpha_1C_F+\beta_1)}{2\lambda(1-\lambda\rho_r)^2}\dsum_{k=0}^{K-1}t_k^2}{(1-\lambda\rho_F)\dsum_{k=0}^{K-1}t_k},
		\end{align}
		where $C_F = \left(\benvF(x^0) - F^* + \dfrac{C\beta_1}{2(1-\lambda\rho_r)^2\lambda}\right)e^{\frac{C\alpha_1}{2(1-\lambda\rho_r)^2\lambda}}.$
		In particular, for a fixed $\Cbar>0$, using a constant step size $t_k= \min\left\{\dfrac{1}{2\rho_F}, \sqrt{\frac{\left(1-\frac{\rho_r}{2\rho_F}\right)^2\delta}{\rho_F(\alpha_1\Cbar_F+\beta_1)K}}, \sqrt{\frac{\Cbar}{K}}\right\}$ for some $\delta\geq F_{1/(2\rho_F)}^\omega(x^0)-F^*$, it holds that
		\begin{align}
				\min_{0\le k\le K-1}\dsymphic{1/(2\rho_F)}(\xhk, \xk) \leq 8\max\left\{\dfrac{\rho_F\delta}{K}, \sqrt{\dfrac{(\alpha_1\Cbar_F+\beta_1)\rho_F\delta}{K}}, \frac{\delta}{2\sqrt{\Cbar K}}\right\},\label{eq:comp-complexity-ubME}
		\end{align}
		where $\Cbar_F = (\delta + 4\Cbar\beta_1\rho_F)e^{4\Cbar\alpha_1\rho_F}$.

		\item \textbf{Convergence under \ref{2assump:comp-ubf}:} Assume $x^0\in\dom r$. For any $\lambda\in(0,\frac1{\rho_F})$ and any nonincreasing step size sequence $\{t_k\}_{k\geq0}$ satisfying $1-t_k\rho_r>0$, we have
		\begin{align}\label{eq:comp-estimate-ubf}
		\min_{0\le k\le K-1}\dsymomelam(\xhk,\xk)\leq\dfrac{\benvF(x^0)-F^*+\dfrac{t_0}{\lambda(1-t_0\rho_r)}\left(F(x^0)-F^*\right)+\dfrac{2L_f^2}{\lambda}\sum_{k=0}^{K-1}\dfrac{t_k^2}{1-t_k\rho_r}}
        {(1-\lambda\rho_F)\sum_{k=0}^{K-1}\dfrac{t_k}{1-t_k\rho_r}}.
		\end{align}
		In particular, let $\lambda=\dfrac{1}{2\rho_F}$ and use a constant step size $t_k= \min\left\{\dfrac{1}{2\rho_F}, \dfrac1{2L_f}\sqrt{\dfrac{\delta}{\rho_FK}}\right\}$ for some $\delta$ satisfying
		\begin{align*}
		\delta\geq\max\left\{F_{1/(2\rho_F)}^\omega(x^0)-F^*,F(x^0)-F^*\right\}.
		\end{align*}
		Then it holds that
		\begin{align}\label{eq:comp-complexity-ubf}
				\min_{0\le k\le K-1}\dsymphic{1/(2\rho_F)}(\xhk,\xk) \leq 8\max\left\{\dfrac{3\rho_F\delta}{K}, 4L_f\sqrt{\dfrac{\rho_F\delta}{K}}\right\}.
		\end{align}
	\end{enumerate}
\end{theorem}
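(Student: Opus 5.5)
The plan is to use the Bregman Moreau envelope $\benvF$ as a Lyapunov function, following \cite{zhang2018convergence} and \cite{davis2019stochastic}. The target is a one-step descent inequality of the form
\begin{align*}
\benvF(x^{k+1})\le \benvF(x^k) - t_k(1-\lambda\rho_F)\,\dsymomelam(\xhk,\xk) + \frac{t_k^2}{2\lambda(1-\lambda\rho_r)^2}\,G_k^2 ,
\end{align*}
where $G_k$ bounds the relevant subgradient norm. Summing over $k$, using $\benvF(x^K)\ge F^*$, and bounding the minimum by the $t_k$-weighted average gives \eqref{eq:comp-estimate-ubF}.

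\textbf{Step 1 (envelope comparison).} By definition of the envelope, $\benvF(x^{k+1})\le F(\xhk)+\frac1\lambda D_\omega(\xhk,x^{k+1})$, and $\benvF(x^k)=F(\xhk)+\frac1\lambda D_\omega(\xhk,x^k)$. It therefore suffices to bound $D_\omega(\xhk,x^{k+1})-D_\omega(\xhk,x^k)$.

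\textbf{Step 2 (three-point inequality).} The subproblem \eqref{eq:comp-alg-update} has objective $r+\frac1{t_k}\omega+\text{linear}$. Since $t_k\le\lambda<1/\rho_r$, this objective is $(\frac1{t_k}-\rho_r)\omega$-relatively strongly convex. Comparing its value at $\xhk$ with its value at the minimizer $x^{k+1}$ gives
\begin{align*}
\langle g_k,x^{k+1}-\xhk\rangle + r(x^{k+1})-r(\xhk)+\tfrac1{t_k}D_\omega(x^{k+1},x^k)+(\tfrac1{t_k}-\rho_r)D_\omega(\xhk,x^{k+1})\le \tfrac1{t_k}D_\omega(\xhk,x^k).
\end{align*}
Relative weak convexity of $f$ gives $f(\xhk)\ge f(x^k)+\langle g_k,\xhk-x^k\rangle-\rho_fD_\omega(\xhk,x^k)$.

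In case (a), I will instead pass through a subgradient $g\in\partial F(x^{k+1})$, which exists by the optimality condition of the subproblem. This lets me write the linear term as $\langle g,x^{k+1}-\xhk\rangle$ and bound it by Young's inequality together with $1$-strong convexity of $\omega$, so that $\|\cdot\|^2\le 2D_\omega$. The optimality of $\xhk$ supplies $\frac1\lambda(\nabla\omega(x^k)-\nabla\omega(\xhk))\in\partial F(\xhk)$. Applying relative weak convexity of $F$ at $\xhk$, the gap $F(x^k)-F(\xhk)$ combines with the Bregman terms into $(1-\lambda\rho_F)\lambda\,\dsymomelam$, because $D_\omega(\xhk,x^k)+D_\omega(x^k,\xhk)=\langle\nabla\omega(x^k)-\nabla\omega(\xhk),x^k-\xhk\rangle$. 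The factor $(1-\lambda\rho_r)^{-2}$ arises from dividing by the coefficient $\frac1{t_k}-\rho_r\ge\frac1{t_k}(1-\lambda\rho_r)$ when absorbing the quadratic. I expect the main obstacle to be this bookkeeping: arranging the Young step so that exactly $\frac{t_k^2L_F^2}{2\lambda(1-\lambda\rho_r)^2}$ remains while the symmetric measure retains coefficient $t_k(1-\lambda\rho_F)$.

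\textbf{Step 3 (the three cases).}

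\emph{Case (a).} Sum the one-step inequality directly. For the constant step, substitute $\lambda=1/(2\rho_F)$, so that $1-\lambda\rho_F=\tfrac12$ and $1-\lambda\rho_r=1-\rho_r/(2\rho_F)$. The bound becomes $\frac{2\delta}{Kt}+\frac{2\rho_FL_F^2t}{(1-\rho_r/(2\rho_F))^2}$. Splitting according to which term of the minimum defines $t$ yields the factor-$8$ maximum in \eqref{eq:comp-complexity-ubF}.

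\emph{Case (b).} Here $G_k^2\le\alpha_1(\benvF(x^k)-F^*)+\beta_1$, so the recursion reads $a_{k+1}\le(1+c\,t_k^2)a_k+c'\,t_k^2$ with $a_k=\benvF(x^k)-F^*$. A discrete Gronwall argument with $\sum t_k^2\le C$ and $1+x\le e^x$ gives $a_k\le C_F$ uniformly. Plugging this uniform bound back in reduces the problem to case (a) with $L_F^2$ replaced by $\alpha_1C_F+\beta_1$. The third term in the step-size minimum enforces $\sum t_k^2\le\Cbar$.

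\emph{Case (c).} Only $\|g_k\|\le L_f$ is available, so I cannot use a subgradient of $F$ at $x^{k+1}$. I keep $r(x^{k+1})$ explicitly and must control $F(x^{k+1})-F(x^k)$. Comparing the subproblem at $x^{k+1}$ and at $x^k$ gives descent $F(x^{k+1})\le F(x^k)+O\big(t_kL_f^2/(1-t_k\rho_r)\big)$. Using the one-step envelope inequality weighted by $\frac{t_k}{1-t_k\rho_r}$ and telescoping the $F$-terms then produces the extra term $\frac{t_0}{\lambda(1-t_0\rho_r)}(F(x^0)-F^*)$. This step requires $x^0\in\dom r$ and nonincreasing $t_k$ so that the weighted telescoping works. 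The constants $2L_f^2$ and $3\rho_F\delta$ follow from the same substitution as in case (a).
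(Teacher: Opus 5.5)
Your Step 2 inequality is correct, and the outlines of cases (b) and (c) match the paper. The gap is in how you close case (a), which (b) inherits.

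After combining your three-point inequality with relative weak convexity of $f$ at $x^k$, the right-hand side becomes $F(\hat x^k)-F(x^k)+\rho_f D_\omega(\hat x^k,x^k)+\bigl[r(x^k)-r(x^{k+1})+\langle g_k,x^k-x^{k+1}\rangle\bigr]$. So you must bound $r(x^k)-r(x^{k+1})$ from above, i.e.\ you need a \emph{lower} bound on $r(x^{k+1})$. A subgradient $g\in\partial F(x^{k+1})$ cannot supply one. Relative weak convexity anchored at $x^{k+1}$ reads $F(y)\ge F(x^{k+1})+\langle g,y-x^{k+1}\rangle-\rho_F D_\omega(y,x^{k+1})$, and this bounds $F(x^{k+1})$ only from above. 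Even if a term $\langle g,x^{k+1}-\hat x^k\rangle$ appeared with a favorable sign, it could not be absorbed at cost $O(t_k^2)$. The distance $\|x^{k+1}-\hat x^k\|$ is of the size of the stationarity gap, not of the step. Young's inequality against $(\frac1{t_k}-\rho_r)D_\omega(\hat x^k,x^{k+1})$ then has two outcomes. If it uses a fixed fraction of that coefficient, it destroys the comparison with $\frac1{t_k}D_\omega(\hat x^k,x^k)$ that Step 1 needs. If it uses only the $O(1)$ multiple that the gap can pay for, it leaves a remainder of order $t_k\|g\|^2$ in the envelope decrease, which does not vanish after dividing by $\sum_k t_k$. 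So this is a structural problem, not bookkeeping.

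The missing idea is to take the subgradient of $r$ at the current point. Choose $g_r^k\in\partial r(x^k)$ and use $r(x^k)-r(x^{k+1})\le\langle g_r^k,x^k-x^{k+1}\rangle+\rho_r D_\omega(x^{k+1},x^k)$. The cross term becomes $\langle g_F(x^k),x^k-x^{k+1}\rangle$, where $g_F(x^k):=g_k+g_r^k\in\partial F(x^k)$ is bounded by $L_F$. Combine the new $\rho_r D_\omega(x^{k+1},x^k)$ with $\rho_r D_\omega(\hat x^k,x^{k+1})$ and with the $-\rho_r D_\omega(\hat x^k,x^k)$ borrowed from $\rho_F=\rho_f+\rho_r$. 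This gives
\begin{equation*}
(1-t_k\rho_r)\bigl(D_\omega(\hat x^k,x^{k+1})+D_\omega(x^{k+1},x^k)-D_\omega(\hat x^k,x^k)\bigr)\le t_k\bigl[-\mathcal G_k+\langle g_F(x^k),x^k-x^{k+1}\rangle\bigr],
\end{equation*}
with $\mathcal G_k:=F(x^k)-F(\hat x^k)-\rho_F D_\omega(\hat x^k,x^k)$. Divide by $\lambda(1-t_k\rho_r)$ and absorb the linear term against $D_\omega(x^{k+1},x^k)$, which is the genuinely small quantity. This yields exactly the remainder $\frac{t_k^2}{2\lambda(1-t_k\rho_r)^2}\|g_F(x^k)\|^2$, after which your constant-step algebra goes through.

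Your case (b) recursion $a_{k+1}\le(1+ct_k^2)a_k+c't_k^2$ already presupposes that the subgradient bound is evaluated at $x^k$. With $g\in\partial F(x^{k+1})$, it would instead involve $F_\lambda^\omega(x^{k+1})$.

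Case (c) is essentially \Cref{lem:comp-lem-rec-gf} plus summation by parts. Your claim $F(x^{k+1})\le F(x^k)+O(t_kL_f^2)$ is true but unnecessary: nonincreasing weights $\frac{t_k}{\lambda(1-t_k\rho_r)}$ and $F(x^k)\ge F^*$ already suffice.
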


The full proof of \Cref{2thm:main_cov} is deferred to \Cref{sec:pf_of_2thm_main}, and the supporting technical lemmas are provided in \Cref{sec:pf_2}. 
As the foundation of this theorem, we first need to establish the descent lemmas of the algorithm. The following \Cref{lem:comp-lem-rec-gF} controls the Bregman Moreau envelope using a subgradient of the full objective $F$.

\begin{lemma}\label{lem:comp-lem-rec-gF}
	Under \Cref{2assum:rwc}, for any $\lambda\in (0, \frac1{\rho_F})$ and step size sequence $\{t_k\}_{k\geq 0}\subset(0, \lambda]$, the iterates $\{x^k\}_{k\geq0}$ generated by Algorithm \ref{alg:comp_bpg} satisfy
	\begin{align}\label{eq:comp-lem-gF}
			\benvF(\xkp) -  \benvF(\xk)
			\leq&\ - \dfrac{t_k(1-\lambda \rho_F)}{(1-t_k\rho_r)\lambda^2}\(D_\omega(\xhk,\xk) + D_\omega(\xk, \xhk)\) + \dfrac{t_k^2}{2(1-t_k\rho_r)^2\lambda} \|g_F(\xk)\|^2,
	\end{align}
    where $\benvF$ is the Moreau envelope defined in \eqref{eq:def_of_me}. 
\end{lemma}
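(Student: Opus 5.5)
The approach is the standard Moreau-envelope descent argument, with two changes: the Euclidean three-point identity is replaced by a Bregman three-point inequality for the subproblem \eqref{eq:comp-alg-update}, and the comparison point is $\xhk=\bproxF(\xk)$. By definition of $\benvF$,
\begin{align*}
\benvF(\xkp)\le F(\xhk)+\tfrac1\lambda D_\omega(\xhk,\xkp),\qquad \benvF(\xk)=F(\xhk)+\tfrac1\lambda D_\omega(\xhk,\xk),
\end{align*}
so it suffices to bound $D_\omega(\xhk,\xkp)-D_\omega(\xhk,\xk)$.

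\textbf{Step 1: three-point inequality for the subproblem.} The objective $\phi_k(x)=\langle g_k,x-\xk\rangle+r(x)+\frac1{t_k}D_\omega(x,\xk)$ has $\phi_k+(\rho_r-\frac1{t_k})\omega$ convex by \Cref{2assum:rwc}. Hence $\phi_k$ is $(\frac1{t_k}-\rho_r)\omega$-relatively strongly convex, and $t_k\le\lambda<1/\rho_F\le1/\rho_r$ ensures the modulus is positive. Its minimizer $\xkp$ therefore satisfies
\begin{align*}
\Bigl(\tfrac1{t_k}-\rho_r\Bigr)D_\omega(\xhk,\xkp)\le \langle g_k,\xhk-\xkp\rangle+r(\xhk)-r(\xkp)+\tfrac1{t_k}\bigl(D_\omega(\xhk,\xk)-D_\omega(\xkp,\xk)\bigr).
\end{align*}
Multiplying by $t_k/(1-t_k\rho_r)$ expresses $D_\omega(\xhk,\xkp)$ in terms of $D_\omega(\xhk,\xk)$, a linear term, a difference of $r$-values, and the negative term $-D_\omega(\xkp,\xk)$. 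That negative term is later used to absorb the error via $D_\omega(\xkp,\xk)\ge\frac12\|\xkp-\xk\|^2$.

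\textbf{Step 2: produce the descent term via $\xhk$.} The optimality condition for $\xhk$ gives $\frac1\lambda(\nabla\omega(\xk)-\nabla\omega(\xhk))\in\partial F(\xhk)$. Applying the relative weak convexity inequality of $F$ twice (once at $\xhk$ with this subgradient, once at $\xk$ with $g_F(\xk)$) and using
\begin{align*}
\langle\nabla\omega(\xk)-\nabla\omega(\xhk),\xk-\xhk\rangle=D_\omega(\xhk,\xk)+D_\omega(\xk,\xhk),
\end{align*}
yields
\begin{align*}
\langle g_F(\xk),\xk-\xhk\rangle\ge\Bigl(\tfrac1\lambda-\rho_F\Bigr)\bigl(D_\omega(\xhk,\xk)+D_\omega(\xk,\xhk)\bigr).
\end{align*}
After multiplying by $\frac{t_k}{(1-t_k\rho_r)\lambda}$, this produces exactly the factor $\frac{t_k(1-\lambda\rho_F)}{(1-t_k\rho_r)\lambda^2}$ appearing in \eqref{eq:comp-lem-gF}.

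\textbf{Step 3 (main obstacle): merge $g_k$ and the $r$-values into $g_F(\xk)$.} Step 1 contains $\langle g_k,\xhk-\xkp\rangle+r(\xhk)-r(\xkp)$, while Step 2 is phrased with $g_F(\xk)$. I would first split
\begin{align*}
\langle g_k,\xhk-\xkp\rangle=\langle g_k,\xhk-\xk\rangle+\langle g_k,\xk-\xkp\rangle.
\end{align*}
I would then take $g_F(\xk)=g_k+v$ with $v\in\partial r(\xk)$, i.e.\ with $\partial F(\xk)\supseteq\partial f(\xk)+\partial r(\xk)$ interpreted through the relative-convexity sum rule noted in the Notation subsection. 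Relative weak convexity of $r$ at $\xk$ with subgradient $v$ lower-bounds $r(\xkp)$ and $r(\xhk)$ relative to $r(\xk)$, up to $\rho_r D_\omega$ terms. Combining these, the $r$-differences should reduce to $\langle v,\xhk-\xkp\rangle$ plus $D_\omega$ corrections. The linear terms then collect into $\langle g_F(\xk),\xhk-\xk\rangle$ (bounded using Step 2) and $\langle g_F(\xk),\xk-\xkp\rangle$.

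The delicate part is tracking the $\rho_r D_\omega$ corrections. If the direct route fails, I would instead use the subgradient $s=\frac1{t_k}(\nabla\omega(\xk)-\nabla\omega(\xkp))-g_k\in\partial r(\xkp)$ and compare at $\xkp$. Either way, the goal is that every leftover correction is nonpositive or is absorbed by the $1/(1-t_k\rho_r)$ normalization.

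\textbf{Step 4: Young's inequality.} Finally, apply
\begin{align*}
\tfrac{t_k}{1-t_k\rho_r}\langle g_F(\xk),\xk-\xkp\rangle-\tfrac{1}{1-t_k\rho_r}D_\omega(\xkp,\xk)\le \tfrac{t_k^2}{2(1-t_k\rho_r)^2}\|g_F(\xk)\|^2,
\end{align*}
which uses $D_\omega(\xkp,\xk)\ge\frac12\|\xkp-\xk\|^2$ together with $1-t_k\rho_r\le1$. Dividing by $\lambda$ and substituting into the envelope inequality gives \eqref{eq:comp-lem-gF}.
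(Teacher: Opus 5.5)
Steps 1, 2 and 4 are each correct as written; Step 1 is exactly the paper's three-point inequality for the subproblem. Step 3, which you flag as the obstacle, does not go through, and the problem is the direction of an inequality. The term $r(\hat x^k)$ enters the right-hand side of Step 1 with a plus sign, so you need an \emph{upper} bound on $r(\hat x^k)-r(x^k)$.

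Relative weak convexity of $r$ at $x^k$ with $v\in\partial r(x^k)$ gives only the lower bound $r(\hat x^k)\ge r(x^k)+\langle v,\hat x^k-x^k\rangle-\rho_rD_\omega(\hat x^k,x^k)$. The bound you actually need, $r(\hat x^k)-r(x^k)\le\langle v,\hat x^k-x^k\rangle+C\,D_\omega(\hat x^k,x^k)$, is a relative-smoothness property that $r$ need not have. For example, take $\omega=\frac12x^2$, $r(x)=|x|^{3/2}$ (convex, hence $\rho_r\omega$-RWC) and $x^k=0$. The only subgradient is $v=0$, while $r(\hat x^k)-r(0)=|\hat x^k|^{3/2}$ is not $O(|\hat x^k|^2)$.

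Your fallback with $s\in\partial r(x^{k+1})$ fails for the same reason: it again only lower-bounds $r(\hat x^k)$, and it is in fact the very inequality that produced Step 1. An upper bound on $r(\hat x^k)$ can only come from the optimality of $\hat x^k$. That information is exactly what you discard in Step 2 when you add the inequality for $F$ at $x^k$ and trade function values for $\langle g_F(x^k),x^k-\hat x^k\rangle$.

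The repair, which is the paper's proof, is never to linearize $r$ at $\hat x^k$.
\begin{itemize}
\item Split the right side of Step 1 as $[\langle g_k,\hat x^k-x^k\rangle+r(\hat x^k)-r(x^k)]+[\langle g_k,x^k-x^{k+1}\rangle+r(x^k)-r(x^{k+1})]$.
\item Bound the first bracket by $F(\hat x^k)-F(x^k)+\rho_fD_\omega(\hat x^k,x^k)$, using relative weak convexity of $f$ at $x^k$ with $g_k$. This way $r(\hat x^k)$ is absorbed into $F(\hat x^k)$ rather than estimated.
\item Bound the second bracket by $\langle g_F(x^k),x^k-x^{k+1}\rangle+\rho_rD_\omega(x^{k+1},x^k)$, using $v\in\partial r(x^k)$.
\item Use only the first half of your Step 2 (the inequality at $\hat x^k$), which is exactly \eqref{eq:bregman-prox-gap-stationarity}:
\begin{align*}
F(\xk)-F(\xhk)-\rho_FD_\omega(\xhk,\xk)\ge\Bigl(\tfrac1\lambda-\rho_F\Bigr)\bigl(D_\omega(\xhk,\xk)+D_\omega(\xk,\xhk)\bigr).
\end{align*}
\end{itemize}

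After multiplying by $t_k/(1-t_k\rho_r)$, the bookkeeping closes exactly. The excess $\frac{t_k\rho_r}{1-t_k\rho_r}D_\omega(\hat x^k,x^k)$ left by Step 1's normalization cancels against the $\rho_r$-part of $\rho_FD_\omega(\hat x^k,x^k)$. The correction $\rho_rD_\omega(x^{k+1},x^k)$ reduces the coefficient of $-D_\omega(x^{k+1},x^k)$ from $\frac{1}{1-t_k\rho_r}$ to $1$. So Step 4 must be run with $D_\omega(x^{k+1},x^k)$ rather than $\frac{1}{1-t_k\rho_r}D_\omega(x^{k+1},x^k)$. Young's inequality still yields exactly $\frac{t_k^2}{2(1-t_k\rho_r)^2}\|g_F(x^k)\|^2$.
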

The proof of \Cref{lem:comp-lem-rec-gF} is provided in \Cref{sec:pf_of_2lem_2}.
While the descent lemma guaranteed by \Cref{lem:comp-lem-rec-gF} requires a bound on the subgradient of the entire objective $F$, \Cref{lem:comp-lem-rec-gf} applies when only the subgradients of $f$ are bounded. Its proof is provided in \Cref{sec:pf_of_2lem_1}.

\begin{lemma}
\label{lem:comp-lem-rec-gf}
Suppose that \Cref{2assum:rwc} and \ref{2assump:comp-ubf} hold. Fix $\lambda\in(0,\frac{1}{\rho_F})$ and suppose $1-t_k\rho_r>0$. Then the iterates generated by \Cref{alg:comp_bpg} satisfy
\begin{align}
\label{eq:comp-lem-gf}
\frac{t_k(1-\lambda\rho_F)}{1-t_k\rho_r}\dsymomelam(\hat x^k,x^k)
\leq F_\lambda^\omega(x^k)-F_\lambda^\omega(x^{k+1})+\frac{t_k}{\lambda(1-t_k\rho_r)}\left(F(x^k)-F(x^{k+1})\right)+\frac{2t_k^2L_f^2}{\lambda(1-t_k\rho_r)}.
\end{align}
\end{lemma}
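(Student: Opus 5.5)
The plan is to compare $F_\lambda^\omega(x^{k+1})$ with $F_\lambda^\omega(x^k)$ using the same test point $\hat x^k=\bprox{F}(x^k)$, and to control $D_\omega(\hat x^k,x^{k+1})$ through the optimality of $x^{k+1}$ in \eqref{eq:comp-alg-update}. By definition \eqref{eq:def_of_me} we have $F_\lambda^\omega(x^{k+1})\le F(\hat x^k)+\frac1\lambda D_\omega(\hat x^k,x^{k+1})$ and $F_\lambda^\omega(x^k)= F(\hat x^k)+\frac1\lambda D_\omega(\hat x^k,x^k)$. Hence
\begin{align*}
F_\lambda^\omega(x^{k+1})-F_\lambda^\omega(x^k)\le \frac1\lambda\bigl(D_\omega(\hat x^k,x^{k+1})-D_\omega(\hat x^k,x^k)\bigr),
\end{align*}
and everything reduces to bounding $D_\omega(\hat x^k,x^{k+1})$.

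\textbf{Step 1 (three-point inequality for the subproblem).} Since $r+\rho_r\omega$ is convex, the subproblem objective $\phi_k(x)=\langle g_k,x-x^k\rangle+r(x)+\frac1{t_k}D_\omega(x,x^k)$ is such that $\phi_k-(\frac1{t_k}-\rho_r)\omega$ is convex, with $\frac1{t_k}-\rho_r>0$. Because $D_\omega(\cdot,x^k)$ differs from $\omega$ only by an affine term, the first-order optimality of $x^{k+1}$ gives, for every $y$, $\phi_k(y)\ge\phi_k(x^{k+1})+(\frac1{t_k}-\rho_r)D_\omega(y,x^{k+1})$. Taking $y=\hat x^k$ and multiplying by $t_k/(1-t_k\rho_r)$ yields
\begin{align*}
D_\omega(\hat x^k,x^{k+1})\le \tfrac{t_k}{1-t_k\rho_r}\Bigl[\langle g_k,\hat x^k-x^{k+1}\rangle+r(\hat x^k)-r(x^{k+1})+\tfrac1{t_k}\bigl(D_\omega(\hat x^k,x^k)-D_\omega(x^{k+1},x^k)\bigr)\Bigr].
\end{align*}

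\textbf{Step 2 (splitting the linear term).} Write $\langle g_k,\hat x^k-x^{k+1}\rangle=\langle g_k,\hat x^k-x^k\rangle+\langle g_k,x^k-x^{k+1}\rangle$. Relative weak convexity of $f$ gives $\langle g_k,\hat x^k-x^k\rangle\le f(\hat x^k)-f(x^k)+\rho_f D_\omega(\hat x^k,x^k)$. Next rewrite $r(\hat x^k)-r(x^{k+1})=F(\hat x^k)-f(\hat x^k)-F(x^{k+1})+f(x^{k+1})$, so the $f(\hat x^k)$ terms cancel. What remains is $f(x^{k+1})-f(x^k)+\langle g_k,x^k-x^{k+1}\rangle-\frac1{t_k}D_\omega(x^{k+1},x^k)$. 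Under \ref{2assump:comp-ubf}, $f$ is $L_f$-Lipschitz, so this is at most $2L_f\|x^{k+1}-x^k\|-\frac1{2t_k}\|x^{k+1}-x^k\|^2\le 2t_kL_f^2$, using $1$-strong convexity of $\omega$ and maximizing the quadratic.

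\textbf{Step 3 (optimality of $\hat x^k$).} Split $F(\hat x^k)-F(x^{k+1})=[F(\hat x^k)-F(x^k)]+[F(x^k)-F(x^{k+1})]$. The second bracket is kept as is; it is finite since $x^k\in\dom r$ for $k\ge1$ by construction, and we may assume $x^0\in\dom r$. For the first bracket, $F+\frac1\lambda D_\omega(\cdot,x^k)$ is $(\frac1\lambda-\rho_F)\omega$-relatively strongly convex with minimizer $\hat x^k$. Comparing its value at $x^k$ with its value at $\hat x^k$ gives $F(\hat x^k)-F(x^k)\le-\frac1\lambda D_\omega(\hat x^k,x^k)-(\frac1\lambda-\rho_F)D_\omega(x^k,\hat x^k)$.

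\textbf{Step 4 (collecting coefficients).} Substituting Steps 2 and 3 into Step 1 and subtracting $D_\omega(\hat x^k,x^k)$, the coefficient of $D_\omega(\hat x^k,x^k)$ becomes
\begin{align*}
\tfrac{1}{1-t_k\rho_r}-1+\tfrac{t_k(\rho_f-1/\lambda)}{1-t_k\rho_r}=-\tfrac{t_k(1-\lambda\rho_F)}{\lambda(1-t_k\rho_r)}.
\end{align*}
The coefficient of $D_\omega(x^k,\hat x^k)$ is the same. Dividing by $\lambda$ produces exactly $-\frac{t_k(1-\lambda\rho_F)}{1-t_k\rho_r}\dsymomelam(\hat x^k,x^k)$, together with $\frac{t_k}{\lambda(1-t_k\rho_r)}(F(x^k)-F(x^{k+1}))+\frac{2t_k^2L_f^2}{\lambda(1-t_k\rho_r)}$. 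Rearranging gives \eqref{eq:comp-lem-gf}.

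The main obstacle is Step 1. The three-point inequality must be justified for a nonconvex $r$, via convexity of $r+\rho_r\omega$ and the subdifferential identity $\partial r=\partial(r+\rho_r\omega)-\rho_r\nabla\omega$. This is also where the factor $1-t_k\rho_r$ enters. The other delicate point is to handle the $r$-difference by passing to $F$-differences instead of using a subgradient of $r$, since $r$'s subgradients are not assumed bounded.
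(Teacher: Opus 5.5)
Your proof is correct and follows essentially the same route as the paper. Both arguments use the three-point inequality for the subproblem from relative weak convexity of $r$, the RWC inequality for $f$ to convert $r$-differences into $F$-differences, the test point $\hat x^k$ in the envelope comparison, the Lipschitz/strong-convexity bound $2t_k^2L_f^2$ on the remainder, and the prox-growth inequality at $z=x^k$, which is equivalent to the paper's \eqref{eq:bregman-prox-gap-stationarity}. The differences are only in bookkeeping (you normalize by $1-t_k\rho_r$ first and split $F(\hat x^k)-F(x^{k+1})$ explicitly), and your coefficient computation in Step 4 checks out.
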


The two lemmas reflect the two ways the deterministic analysis can be closed. When a bound on $\partial F$ is available, \Cref{lem:comp-lem-rec-gF} applies directly. When only the subgradients of $f$ are controlled, \Cref{lem:comp-lem-rec-gf} avoids requiring a bound on $\partial F$. This completes the deterministic analysis of the BPG update, and in the next section we extend the same Bregman framework to stochastic model-based updates.


\section{Bregman Stochastic Model-Based Minimization}\label{sec:BSMM}

We now consider the stochastic composite problem
\begin{align}
\label{eq:sto-comp-opt}
\min_{x\in\R^d}\ F(x):=f(x)+r(x),
\qquad
f(x):=\E_{\xi\sim P}[f(x;\xi)] .
\end{align}
Here $\xi\sim P$ is sampled from a fixed distribution, $f(\cdot;\xi):\R^d\to\R$ is proper and lower semicontinuous for almost every $\xi$, and $r:\R^d\to\R$ is a deterministic regularizer.

The deterministic BPG method in the previous section uses a subgradient of $f$ at each iteration. Its most direct stochastic extension is therefore to replace this deterministic subgradient with a stochastic subgradient oracle. Such an extension leads to the stochastic proximal subgradient method, but it does not cover the full sampled function used in stochastic proximal point methods or structured local approximations used in stochastic prox-linear methods. To accommodate these methods within the same framework, we follow the stochastic model-based formulation of \citet{davis2019stochastic}. Given a base point $x$, let $f_x(\cdot;\xi)$ denote a stochastic model of $f$. At iteration $k$, after sampling $\xi_k$, we compute
\begin{align}
\label{eq:sto-intro-bsmm-update}
x^{k+1}\in\argmin_{y\in\R^d}\left\{f_{x^k}(y;\xi_k)+r(y)+\frac{1}{t_k}D_\omega(y,x^k)
\right\}.
\end{align}
This formulation also covers stochastic proximal point, stochastic proximal subgradient, and stochastic prox-linear methods in the same notation \citep{davis2019stochastic,asi2019stochastic,davis2019proximally,drusvyatskiy2018error,zhang2022stochastic}. 

In this section we analyze the stochastic model-based minimization \eqref{eq:sto-intro-bsmm-update} in the previous Bregman setting where $f(\cdot)$ and $r(\cdot)$ are only relatively weakly convex. The rest of this section is organized as follows. In Section~\ref{sec:3.1}, we state the assumptions on the stochastic models and show that they imply relative weak convexity of the expected objective $F$. In Section~\ref{sec:3.2}, we present the Bregman stochastic model-based algorithm. In Section~\ref{sec:3.3}, we prove the convergence guarantee and derive an $\mathcal O(\varepsilon^{-4})$ iteration complexity for finding an expected $\varepsilon$-stationary point.

\subsection{Preliminaries and Assumptions}
\label{sec:3.1}

We next state the assumptions on the stochastic models $f_x(\cdot;\xi)$ in \eqref{eq:sto-intro-bsmm-update}. These conditions are stated directly in terms of the model functions rather than stochastic subgradients, and they extend the relative weak convexity and subgradient-control conditions from the deterministic setting to stochastic model functions, while adding a one-sided accuracy condition that relates the models to the expected loss $f$.

\begin{assumption}\label{asp:model-basic}\leavevmode
Fix a probability space $(\Omega,\mathcal F,P)$ and equip $\R^d$ with the Borel $\sigma$-algebra. Assume that $r$ is proper and lower semicontinuous, that $(x,y,\xi)\mapsto f_x(y;\xi)$ is jointly measurable, and that $f_x(\cdot;\xi)+r(\cdot)$ is proper and lower semicontinuous for every $x\in\R^d$ and almost every $\xi\in\Omega$. Also assume there exist constants $\tau,\rho>0$ and a continuously differentiable, $1$-strongly convex function $\omega:\R^d\to\R$ such that $F^*:=\inf_{x\in\R^d}F(x)>-\infty$ and, for every $x \in \R^d$:
\begin{enumerate}[label={\textbf{A(\arabic*):}}, ref={Assumption \ref{asp:model-basic}.A(\arabic*)}]
\item \label{asp:model-unbiased} \textbf{One-sided accuracy.}
For all $x, y\in\R^d$,
\begin{align}
\E_\xi[\sfx(x;\xi)] &= f(x), \qquad
\E_\xi[\sfx(y;\xi)-f(y)] \leq
\tau\min\{D_\omega(x,y),D_\omega(y,x)\}. \label{eq:model-asp-unbiased}
\end{align}
\item \label{asp:model-rwc} \textbf{Relative weak convexity.}
For every $x\in \R^d$ and  $\xi\in\Omega$, the stochastic composite surrogate $\sfx(\cdot;\xi)+r(\cdot)$
is $\rho\omega$-relatively weakly convex.
\item \label{asp:model-lip} \textbf{Relative Lipschitz continuity.}
There exist a measurable function $\sml:\R^d\times\Omega\to\R_+$ and a deterministic function $L:\R^d\to\R_+$ such that
\begin{align}
\sqrt{\E_\xi[\sml^2(x;\xi)]}\leq L(x)
\end{align}
and, for all $y\in\R^d$ and almost every $\xi\in\Omega$,
\begin{align}
\sfx(x;\xi)-\sfx(y;\xi)
\leq
\sml(x;\xi)
\sqrt{\min\{D_\omega(x,y),D_\omega(y,x)\}} .
\end{align}
\end{enumerate}
\end{assumption}

\ref{asp:model-unbiased} requires the stochastic surrogate to be unbiased at the center point $x$ and models the one-sided approximation error via the Bregman distance. \ref{asp:model-rwc} ensures that the stochastic subproblem objective inherits weak convexity relative to $\omega$. \ref{asp:model-lip} controls the continuity of the surrogate measured by the Bregman distance.
The corresponding growth parameter $L(x)$ is assumed to satisfy one of the following two conditions.

\begin{assumption}\label{asp:ub}
The growth parameter $L(x)$ satisfies at least one of the following:
\begin{enumerate}[label={\textbf{A(\arabic*):}}, ref={Assumption \ref{asp:ub}.A(\arabic*)}]
\item \label{asp:model-lip-f} \textbf{Global uniform bound.}
There exists $L_f>0$ such that $L(x)\leq L_f$
for all $x\in\R^d$.
\item \label{asp:model-lip-ME} \textbf{Moreau envelope-dependent growth.}
There exist $\alpha_1,\beta_1>0$ such that
\begin{align}
L(x)\leq\sqrt{\alpha_1\bigl(F_\lambda^\omega(x)-F^*\bigr)+\beta_1}
\end{align}
for all $x\in\R^d$.
\end{enumerate}
\end{assumption}

Since the stationarity measure \eqref{eq:def_of_bsm} is defined with respect to the true objective $F=f+r$ rather than individual local surrogates $f_x(\cdot; \xi)$, it is also necessary to obtain the relative weak convexity of $F$ inherited from its components to guarantee that \eqref{eq:def_of_bsm} is well-defined. The following proposition formalizes this property.

\begin{proposition}\label{lem:model-exp-rwc}
If \Cref{asp:model-basic} holds, then $F=f+r$ is $(\tau+\rho)\omega$-relatively weakly convex.
\end{proposition}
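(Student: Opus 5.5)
The idea is to prove convexity of $F+(\tau+\rho)\omega$ directly, by writing it as a pointwise supremum of convex functions. Taking the supremum over base points $x$ of the expected models should give a convex minorant of $F+(\tau+\rho)\omega$ that agrees with it on the diagonal.

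\textbf{Step 1 (convex expected models).} For a fixed base point $x$, define
\[
h_x(y):=\E_\xi\bigl[\sfx(y;\xi)\bigr]+r(y)+\rho\,\omega(y).
\]
By \ref{asp:model-rwc}, for almost every $\xi$ the map $y\mapsto \sfx(y;\xi)+r(y)+\rho\omega(y)$ is convex. Taking expectations preserves convexity, since convex combinations pass through the integral; joint measurability from \Cref{asp:model-basic} justifies the integral. Hence $h_x$ is convex, possibly taking the value $+\infty$.

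\textbf{Step 2 (upper bound by $F$).} By the one-sided accuracy in \ref{asp:model-unbiased},
\[
\E_\xi[\sfx(y;\xi)]\le f(y)+\tau D_\omega(y,x),
\]
using the $\min\{\cdot,\cdot\}\le D_\omega(y,x)$ branch. Therefore
\[
h_x(y)\le F(y)+\rho\omega(y)+\tau D_\omega(y,x).
\]
Expanding $D_\omega(y,x)=\omega(y)-\omega(x)-\langle\nabla\omega(x),y-x\rangle$ gives
\[
h_x(y)-\tau\bigl(\omega(x)+\langle\nabla\omega(x),y-x\rangle\bigr)\le F(y)+(\tau+\rho)\omega(y).
\]
Call the left-hand side $\phi_x(y)$. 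It is $h_x$ minus an affine function of $y$, so it is convex. At $y=x$ the unbiasedness $\E_\xi[\sfx(x;\xi)]=f(x)$ gives equality:
\[
\phi_x(x)=F(x)+(\tau+\rho)\omega(x).
\]

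\textbf{Step 3 (supremum).} From Steps 1 and 2,
\[
F(y)+(\tau+\rho)\omega(y)=\sup_{x}\phi_x(y),
\]
because each $\phi_x$ lies below the left-hand side and $\phi_y$ attains it at $y$. A pointwise supremum of convex functions is convex. Hence $F+(\tau+\rho)\omega$ is convex, which is exactly the first form of \Cref{def:rwc}. The equivalent subgradient inequality then follows by the subdifferential identity recorded in the notation section. Lower semicontinuity and properness of $F$ are inherited from the standing assumptions, or alternatively from the supremum representation when each $\phi_x$ is lsc.

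\textbf{Main obstacle.} The delicate part is the measure-theoretic bookkeeping in Step 1. Expectations of convex extended-valued functions must be well defined, which requires integrability of $\sfx(y;\xi)$. I would first argue from \ref{asp:model-lip} and unbiasedness that $\sfx(y;\xi)$ is bounded below by an integrable function, namely $\sfx(x;\xi)-\sml(x;\xi)\sqrt{D_\omega(y,x)}$. This lets the expectation take values in $(-\infty,+\infty]$, and the convexity inequality then passes to the expectation without ambiguity.
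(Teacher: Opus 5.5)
Your argument is essentially the paper's proof. The paper fixes $x,y$ and $\bar x=\lambda x+(1-\lambda)y$ and applies the convexity of the model centered at $\bar x$, together with unbiasedness at $\bar x$, one-sided accuracy with $D_\omega(\cdot,\bar x)$, and cancellation of the linear Bregman terms. This is exactly the chain $\phi_{\bar x}(\bar x)\le\lambda\phi_{\bar x}(x)+(1-\lambda)\phi_{\bar x}(y)\le\lambda\bigl(F+(\tau+\rho)\omega\bigr)(x)+(1-\lambda)\bigl(F+(\tau+\rho)\omega\bigr)(y)$, which your supremum-of-minorants formulation packages.

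You have one sign slip to fix. The affine term must be added, i.e.\ $\phi_x(y):=h_x(y)+\tau\bigl(\omega(x)+\langle\nabla\omega(x),y-x\rangle\bigr)$. With the minus sign you wrote, neither the minorant inequality nor the touching identity holds: at $y=x$ you would get $F(x)+(\rho-\tau)\omega(x)$ rather than $F(x)+(\tau+\rho)\omega(x)$.
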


With the relative weak convexity of $F$ established, the task of finding an approximate stationary point for the composite problem \eqref{eq:sto-comp-opt} is well-posed. We now proceed to introduce the algorithmic framework, which iteratively minimizes these stochastic surrogates to achieve this goal.

\subsection{Algorithm BSMM}
\label{sec:3.2}
We state the \textbf{B}regman \textbf{S}tochastic \textbf{M}odel-Based \textbf{M}inimization (BSMM) framework in \Cref{alg:bsmm}. 
At each iteration, the update step \eqref{eq:3subpro} minimizes the stochastic model surrogate $f_{x^k}(\cdot; \xi_k)$ alongside the deterministic regularizer $r(\cdot)$ and the Bregman distance $D_\omega(\cdot,x^k)$. Following standard conventions in stochastic nonconvex optimization, the final output $x^{\bar{k}}$ is drawn randomly from the generated trajectory according to a discrete distribution proportional to the step sizes. 

\begin{algorithm}[ht]
	\caption{Bregman Stochastic Model-based Minimization (BSMM)} \label{alg:bsmm}
	{\bf Input:}  Initial point $x^0\in\dom r$, step sizes $\{t_k\}_{k\geq0}>0$, and maximum number of iterations $K$.
	\begin{algorithmic}[1]
		\item[1:] {\bf for} {$k=0,1,\ldots, K-1$} {\bf do}
		\item[2:] $\quad$Sample $\xi_k \sim P$;
		\item[3:] $\quad$Update  
        \begin{align}\label{eq:3subpro}
            x^{k+1} = \argmin_{y\in\R^d} \left\{f_{x^k} (y;\xi_k) + r(y) + \frac{1}{t_k} D_\omega (y,x^k)\right\};
        \end{align}
		\item[4:] {\bf end for}
		\item[5:] Sample $\bar{k}$ from $\{0,1,\ldots, K-1\}$ according to the discrete probability distribution
        \begin{align*}
            \mathbb P(\bar k=k)=\frac{t_k}{\sum_{i=0}^{K-1}t_i}.
        \end{align*}
	\end{algorithmic}
    {\bf Return} $x^{\bar{k}}$.
\end{algorithm}

With the algorithmic framework established, the remaining question is whether BSMM can find an approximate stationary point of the composite problem and how many iterations are required. \Cref{3thm:main} answers this question by establishing convergence bounds for the expected Bregman stationarity measure and the corresponding iteration complexity.
\subsection{Convergence Analysis}
\label{sec:3.3}

We now present the convergence guarantees for \Cref{alg:bsmm}. In \Cref{3thm:main}, we demonstrate that the iterate sequence of \Cref{alg:bsmm} converges at a rate of $\mathcal{O}(\varepsilon^{-4})$. The complete proof is provided in \Cref{sec:pf_of_3thm_main}.

\begin{theorem}\label{3thm:main}
    Suppose that \Cref{asp:model-basic} holds. Let $\lambda\in(0,\frac1{\tau+\rho})$, and let $\{t_k\}_{k\geq0}$ be a nonincreasing step size sequence satisfying $0<t_k<\min\{\frac1{\rho},\frac1{\tau}\}$.
    \begin{enumerate}[label=(\alph*)]
        \item \textbf{Convergence under \ref{asp:model-lip-f}:} If \ref{asp:model-lip-f} holds, then
        \begin{align}\label{eq:model-thm-estimate}
        \E[\dsymomelam(\xhat^{\kbar},\xkbar)]\leq\frac{\benvF(x^0)-F^*+\dfrac{t_0}{\lambda(1-t_0\rho)}\left(F(x^0)-F^*\right)+\dfrac{L_f^2}{\lambda}\sum_{k=0}^{K-1}\dfrac{t_k^2}{(1-t_k\rho)(1-t_k\tau)}}{(1-\lambda(\tau+\rho))\sum_{k=0}^{K-1}t_k}.
        \end{align}
        In particular, let $\lambda=\frac1{2(\tau+\rho)}$ and $t_k=\frac{c}{\sqrt K}$, where $0<c\leq\min\{\frac1{2\rho},\frac1{2\tau}\}$. For any $\delta\geq\max\{\benvF(x^0)-F^*,F(x^0)-F^*\}$, we have
        \begin{align}\label{eq:model-thm-complexity-ubf}
        \E[\dsymphic{1/(2(\tau+\rho))}(\xhat^{\kbar},\xkbar)]\leq\frac{2}{c\sqrt K}\left(\delta+\frac{4(\tau+\rho)c\delta}{\sqrt K}+8(\tau+\rho)L_f^2c^2\right).
        \end{align}

        \item \textbf{Convergence under \ref{asp:model-lip-ME}:} Suppose that \ref{asp:model-lip-ME} holds and $\frac{\alpha_1}{\lambda}\sum_{k=0}^{K-1}\frac{t_k^2}{(1-t_k\rho)(1-t_k\tau)}<1.$
        Then
        \begin{align}\label{eq:model-thm-estimate-ME}
        \E[\dsymomelam(\xhat^{\kbar},\xkbar)]\leq&\ \frac{1}{(1-\lambda(\tau+\rho))\left(1-\dfrac{\alpha_1}{\lambda}\sum_{k=0}^{K-1}\dfrac{t_k^2}{(1-t_k\rho)(1-t_k\tau)}\right)\sum_{k=0}^{K-1}t_k}\Bigg\{\benvF(x^0)-F^*\nonumber\\
        &+\frac{t_0}{\lambda(1-t_0\rho)}\left(F(x^0)-F^*\right)+\frac{\beta_1}{\lambda}\sum_{k=0}^{K-1}\frac{t_k^2}{(1-t_k\rho)(1-t_k\tau)}\Bigg\}.
        \end{align}
        In particular, let $\lambda=\frac1{2(\tau+\rho)}$ and $t_k=\frac{c}{\sqrt K}$, where $0<c\leq\min\left\{\frac1{2\rho},\frac1{2\tau}\right\},\  8\alpha_1(\tau+\rho)c^2<1.$
        For any $\delta\geq\max\{\benvF(x^0)-F^*,F(x^0)-F^*\}$, we have
        \begin{align}\label{eq:model-thm-complexity-ME}
        \E[\dsymphic{1/(2(\tau+\rho))}(\xhat^{\kbar},\xkbar)]\leq\frac{2}{c\sqrt K}\frac{\delta+\dfrac{4(\tau+\rho)c\delta}{\sqrt K}+8(\tau+\rho)\beta_1c^2}{1-8\alpha_1(\tau+\rho)c^2}.
        \end{align}
    \end{enumerate}
\end{theorem}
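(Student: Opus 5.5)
The plan is to follow the template of \Cref{lem:comp-lem-rec-gf}, replacing the deterministic subgradient step by the stochastic model subproblem \eqref{eq:3subpro}. The argument has three parts: a one-step inequality comparing $x^{k+1}$ with $\hat x^k:=\operatorname{prox}^\omega_{\lambda F}(x^k)$, a telescoping sum, and the step-size substitution.

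\textbf{Step 1 (three-point inequality).} Write $\Phi_k(y):=f_{x^k}(y;\xi_k)+r(y)+\frac1{t_k}D_\omega(y,x^k)$. By \ref{asp:model-rwc}, $\Phi_k$ is $(1/t_k-\rho)\omega$-relatively strongly convex. Hence, for every $y$,
\[
\Phi_k(y)\ge \Phi_k(x^{k+1})+\Big(\tfrac1{t_k}-\rho\Big)D_\omega(y,x^{k+1}).
\]

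\textbf{Step 2 (envelope descent).} Take $y=\hat x^k$ and use $F_\lambda^\omega(x^{k+1})\le F(\hat x^k)+\frac1\lambda D_\omega(\hat x^k,x^{k+1})$.
\begin{itemize}
\item Take conditional expectation $\E_k$.
\item Bound $\E_k[f_{x^k}(\hat x^k;\xi_k)]\le f(\hat x^k)+\tau D_\omega(\hat x^k,x^k)$ by \ref{asp:model-unbiased}.
\item Replace $f_{x^k}(x^{k+1};\xi_k)$ by $f_{x^k}(x^k;\xi_k)-\sml(x^k;\xi_k)\sqrt{D_\omega(x^{k+1},x^k)}$ using \ref{asp:model-lip}.
\item Absorb the resulting cross term $\sml\sqrt{D}$ against $\frac1{t_k}D_\omega(x^{k+1},x^k)$ by Young's inequality. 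This is where the $t_k^2L^2/(1-t_k\tau)$-type terms come from.
\end{itemize}
The term $r(x^{k+1})$ does not cancel, because $r$ is not controlled by a Lipschitz bound. As in \Cref{lem:comp-lem-rec-gf}, I will keep it by adding $\frac{t_k}{\lambda(1-t_k\rho)}(F(x^k)-F(x^{k+1}))$ to the right-hand side. To handle this term in expectation, I will also separately test $\Phi_k$ at $y=x^k$: this gives $\E_k[f_{x^k}(x^{k+1};\xi_k)+r(x^{k+1})]\le F(x^k)$ minus a Bregman term, and \ref{asp:model-unbiased} together with \ref{asp:model-lip} converts the model value back to $f(x^{k+1})$ up to an $L^2t_k$ error.

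\textbf{Step 3 (lower bound by the stationarity measure).} Use the optimality of $\hat x^k$, namely that $F+\frac1\lambda D_\omega(\cdot,x^k)$ is $(1/\lambda-(\tau+\rho))$-relatively strongly convex, where relative weak convexity of $F$ comes from \Cref{lem:model-exp-rwc}. Compared at $x^k$, this yields
\[
F(x^k)-F(\hat x^k)-\tfrac1\lambda D_\omega(\hat x^k,x^k)\ge \big(\tfrac1\lambda-(\tau+\rho)\big)D_\omega(x^k,\hat x^k).
\]
Combining with the symmetric piece gives the factor $(1-\lambda(\tau+\rho))\,\dsymomelam(\hat x^k,x^k)$ multiplied by $t_k$. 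The outcome is a one-step bound
\[
t_k(1-\lambda(\tau+\rho))\E\,\dsymomelam\le \E[F_\lambda^\omega(x^k)-F_\lambda^\omega(x^{k+1})]+\tfrac{t_k}{\lambda(1-t_k\rho)}\E[F(x^k)-F(x^{k+1})]+\tfrac{t_k^2L(x^k)^2}{\lambda(1-t_k\rho)(1-t_k\tau)}.
\]

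\textbf{Step 4 (telescoping).} Sum over $k$. The $F$-differences telescope by Abel summation: since $t_k/(1-t_k\rho)$ is nonincreasing and $F(x^{k})-F^*\ge0$, the total is bounded by $\frac{t_0}{\lambda(1-t_0\rho)}(F(x^0)-F^*)$. The envelope terms telescope to $F_\lambda^\omega(x^0)-F^*$. Dividing by $\sum t_k$ and using the definition of $\bar k$ gives \eqref{eq:model-thm-estimate}.

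For (b), insert $L(x^k)^2\le\alpha_1(F_\lambda^\omega(x^k)-F^*)+\beta_1$. I will handle the envelope terms by bounding $\E F_\lambda^\omega(x^k)-F^*$ through the same recursion, or more simply by moving the $\alpha_1$ part to the left-hand side. The latter is what produces the factor $1-\frac{\alpha_1}{\lambda}\sum\frac{t_k^2}{(1-t_k\rho)(1-t_k\tau)}$.

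\textbf{Step 5 (complexity).} Substitute $\lambda=1/(2(\tau+\rho))$ and $t_k=c/\sqrt K$, and use $1-t_k\rho,\,1-t_k\tau\ge\frac12$. This yields \eqref{eq:model-thm-complexity-ubf} and \eqref{eq:model-thm-complexity-ME} by direct arithmetic.

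\textbf{Main obstacle.} I expect the hardest part to be Step 2: obtaining a clean one-step inequality in which the nonsmooth, nonconvex $r$ and the model error cancel with only $\tau$-sized Bregman losses and the stated constants. The min-form Bregman bounds in \ref{asp:model-basic} must be applied in the correct argument order at each use.
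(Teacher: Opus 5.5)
Your skeleton is the paper's: the three-point inequality for the subproblem tested at $\hat x^k$, feasibility of $\hat x^k$ for $F_\lambda^\omega(x^{k+1})$, the gap inequality for $F$ with modulus $\tau+\rho$, and Abel summation with nonincreasing weights. The gap is at the crux of Step 2, the step you flagged.

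After lower-bounding $\widetilde f_{x^k}(x^{k+1};\xi_k)$ by $\widetilde f_{x^k}(x^k;\xi_k)-\widetilde L(x^k;\xi_k)\sqrt{D_\omega(x^{k+1},x^k)}$, the leftover $r(x^k)-r(x^{k+1})=F(x^k)-F(x^{k+1})+f(x^{k+1})-f(x^k)$ needs an \emph{upper} bound on $f(x^{k+1})-f(x^k)$. Testing $\Phi_k$ at $y=x^k$ upper-bounds $\widetilde f_{x^k}(x^{k+1};\xi_k)+r(x^{k+1})$, which is the wrong direction. Moreover, \ref{asp:model-unbiased} and \ref{asp:model-lip} centered at $x^k$ only give the lower bound $f(x^{k+1})\ge f(x^k)-L(x^k)\sqrt{D}-\tau D$. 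The missing idea is to use the model centered at $x^{k+1}$ with an independent copy $\xi\sim P$, which is needed because $x^{k+1}$ depends on $\xi_k$. One has $f(x^{k+1})=\E_\xi[\widetilde f_{x^{k+1}}(x^{k+1};\xi)]$ and $\E_\xi[\widetilde f_{x^{k+1}}(x^k;\xi)]\le f(x^k)+\tau D_\omega(x^{k+1},x^k)$, hence
\begin{equation*}
f(x^{k+1})-f(x^k)\le L(x^{k+1})\sqrt{D_\omega(x^{k+1},x^k)}+\tau D_\omega(x^{k+1},x^k).
\end{equation*}
This $\tau D_\omega$ term, absorbed with the cross terms against $\frac1{t_k}D_\omega(x^{k+1},x^k)$, is the real source of the factor $1-t_k\tau$; your Young step on $\widetilde L\sqrt{D}$ alone does not produce it. It also forces $L(x^{k+1})$ into the bound. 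The correct error term is $\frac{t_k^2}{2\lambda(1-t_k\rho)(1-t_k\tau)}\bigl(\E_k[L^2(x^{k+1})]+L^2(x^k)\bigr)$.

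In fact, your one-step bound with $L(x^k)^2$ alone is false. Take $d=1$, $\omega(x)=\frac12x^2$, $f(x)=\max\{0,Mx\}$ with $M>1$, $r(x)=-x$, and the deterministic model $\widetilde f_x(y)=f(x)+g(x)(y-x)$ with $g(x)=M\mathbf{1}\{x>0\}$. Then \Cref{asp:model-basic} holds with $L(x)=\sqrt2\,g(x)$ and $F^*=0$. At $x^k=0$ one has $L(x^k)=0$, $\hat x^k=x^k$, $x^{k+1}=t_k$ and $F(x^{k+1})=(M-1)t_k$. So your right-hand side equals $-F_\lambda^\omega(t_k)-\frac{(M-1)t_k^2}{\lambda(1-t_k\rho)}<0$, while the left-hand side is $0$.

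In (a) this only breaks the derivation, since $L^2(x^{k+1})\le L_f^2$ restores your constant. In (b) you must also absorb $\E[F_\lambda^\omega(x^{k+1})-F^*]$, and your ``simpler'' option of moving the $\alpha_1$ part to the left-hand side fails. The left-hand side is $\sum_k t_k\E[\dsymomelam(\hat x^k,x^k)]$, which does not dominate envelope gaps. The factor $1-\theta$, with $\theta:=\frac{\alpha_1}{\lambda}\sum_k\frac{t_k^2}{(1-t_k\rho)(1-t_k\tau)}$, comes from a self-bounding argument:
\begin{itemize}
\item Let $S:=\max_{0\le j\le K}\E[F_\lambda^\omega(x^j)-F^*]$, and let $A$ be the braced quantity in \eqref{eq:model-thm-estimate-ME}.
\item Summing the recursion up to $j-1$, dropping the stationarity term and using Abel summation gives $S\le A+\theta S$.
\item The full sum then gives $(1-\lambda(\tau+\rho))\sum_k t_k\E[\dsymomelam(\hat x^k,x^k)]\le A+\theta S\le A/(1-\theta)$.
\end{itemize}
This covers the $x^k$ and $x^{k+1}$ envelope terms at the same cost.
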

The proof of \Cref{3thm:main} relies on the following one-step expected descent inequality in \Cref{lem:model-recursion}, which extends the deterministic analysis from Section~\ref{sec:BPG} to the stochastic surrogate. The proof is in \Cref{sec:pf_of_3lem_2}.

\begin{lemma}\label{lem:model-recursion}
    Suppose that \Cref{asp:model-basic} holds. Let $\lambda\in(0,\frac1{\tau+\rho})$ and $0<t_k<\min\{\frac1{\rho},\frac1{\tau}\}$. Then the iterates generated by \Cref{alg:bsmm} satisfy
    \begin{align}
    \frac{t_k(1-\lambda(\tau+\rho))}{1-t_k\rho}\dsymomelam(\xhk,\xk)\leq&\ \benvF(\xk)-\E_k[\benvF(\xkp)]+\frac{t_k}{\lambda(1-t_k\rho)}\left(F(\xk)-\E_k[F(\xkp)]\right)\nonumber\\
    &+\frac{t_k^2}{2\lambda(1-t_k\rho)(1-t_k\tau)}\left(\E_k[L^2(\xkp)]+L^2(\xk)\right).\label{eq:model-recursion}
    \end{align}
\end{lemma}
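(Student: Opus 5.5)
The plan is to compare the subproblem solved at iteration $k$ with the Bregman proximal point $\xhk$, which serves as a test point, and then push the comparison through the definition of $\benvF$. Write $\varphi_k(y):=\sfxk(y;\xik)+r(y)+\frac1{t_k}D_\omega(y,\xk)$. By \ref{asp:model-rwc}, $\varphi_k$ is $(\frac1{t_k}-\rho)\omega$-relatively strongly convex, and $t_k<1/\rho$ makes this modulus positive. Since $\xkp$ minimizes $\varphi_k$, the three-point (relative strong convexity) inequality at $y=\xhk$ gives
\begin{align*}
\Big(\tfrac1{t_k}-\rho\Big)D_\omega(\xhk,\xkp)\le \sfxk(\xhk;\xik)+r(\xhk)-\sfxk(\xkp;\xik)-r(\xkp)+\tfrac1{t_k}D_\omega(\xhk,\xk)-\tfrac1{t_k}D_\omega(\xkp,\xk).
\end{align*}
On the envelope side, $\benvF(\xkp)\le F(\xhk)+\frac1\lambda D_\omega(\xhk,\xkp)$ and $\benvF(\xk)=F(\xhk)+\frac1\lambda D_\omega(\xhk,\xk)$. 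Multiplying the display by $\frac{t_k}{\lambda(1-t_k\rho)}$ and substituting therefore bounds $\benvF(\xkp)-\benvF(\xk)$ in terms of model values and Bregman distances.

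Next I take $\E_k$ and replace the model values by true function values. For the term at $\xhk$, the first relation in \ref{asp:model-unbiased} and $\xhk$ being $\mathcal F_k$-measurable give $\E_k[\sfxk(\xhk;\xik)]\le f(\xhk)+\tau D_\omega(\xhk,\xk)$. For the term at $\xkp$, I split $-\sfxk(\xkp;\xik)$ as
\begin{align*}
-\sfxk(\xkp;\xik)=-\sfxk(\xk;\xik)+\bigl(\sfxk(\xk;\xik)-\sfxk(\xkp;\xik)\bigr).
\end{align*}
Unbiasedness makes the first piece $-f(\xk)$ in expectation, and \ref{asp:model-lip} bounds the bracket by $\sml(\xk;\xik)\sqrt{D_\omega(\xkp,\xk)}$.

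To produce the terms $F(\xk)-\E_k[F(\xkp)]$ and $L^2(\xkp)$, I also bound $f(\xkp)-f(\xk)$. Here I use the model based at $\xkp$ with an independent copy $\xi'$ of the sample. \ref{asp:model-lip} gives $f_{\xkp}(\xkp;\xi')-f_{\xkp}(\xk;\xi')\le \sml(\xkp;\xi')\sqrt{D}$, and \ref{asp:model-unbiased} gives $\E_{\xi'}[f_{\xkp}(\xk;\xi')]\le f(\xk)+\tau D$. Together,
\begin{align*}
f(\xkp)\le f(\xk)+\tau D_\omega(\xkp,\xk)+L(\xkp)\sqrt{D_\omega(\xkp,\xk)}.
\end{align*}
This lets me trade $-f(\xk)-r(\xkp)$ for $-F(\xkp)$ plus error terms. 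Young's inequality then absorbs both terms of the form $\sqrt{D_\omega(\xkp,\xk)}$ into the negative term $-\frac1{t_k}D_\omega(\xkp,\xk)$, after reserving $\tau D_\omega(\xkp,\xk)$ for the accuracy error. The leftover coefficient $\frac{1-t_k\tau}{t_k}$ is what produces the factor $\frac{t_k}{2(1-t_k\tau)}\bigl(\E_k[L^2(\xkp)]+L^2(\xk)\bigr)$, using $\E_k[\sml^2(\xk;\xik)]\le L^2(\xk)$.

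It remains to turn the surviving terms $F(\xhk)-F(\xk)$ and the Bregman distances from $\xk$ to $\xhk$ into the symmetrized measure. By \Cref{lem:model-exp-rwc}, $F+\frac1\lambda D_\omega(\cdot,\xk)$ is $(\frac1\lambda-\tau-\rho)\omega$-relatively strongly convex with minimizer $\xhk$. Hence
\begin{align*}
F(\xhk)+\tfrac1\lambda D_\omega(\xhk,\xk)+\Big(\tfrac1\lambda-\tau-\rho\Big)D_\omega(\xk,\xhk)\le F(\xk).
\end{align*}
The net coefficient of $D_\omega(\xhk,\xk)$ becomes $-\frac{t_k}{\lambda(1-t_k\rho)}\bigl(\frac1\lambda-\tau\bigr)$, up to how $\tau$ is attributed. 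Bounding it by $-\frac{t_k(1-\lambda(\tau+\rho))}{\lambda^2(1-t_k\rho)}$ yields the symmetrized term $\dsymomelam(\xhk,\xk)$ with coefficient $\frac{t_k(1-\lambda(\tau+\rho))}{1-t_k\rho}$. The remaining $\frac{t_k}{\lambda(1-t_k\rho)}(F(\xk)-F(\xkp))$ is exactly the middle term of \eqref{eq:model-recursion}.

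The main obstacle is the constant bookkeeping. I must choose which of the two Bregman orderings to use in each $\min\{D_\omega(x,y),D_\omega(y,x)\}$, and split the $\frac1{t_k}D_\omega(\xkp,\xk)$ budget between the two Young steps and the $\tau$ error, so that exactly $(1-t_k\tau)$ appears in the denominator. The conditioning also needs care: the auxiliary sample $\xi'$ is independent of $\xkp$, so that $\E[\sml^2(\xkp;\xi')\mid\xkp]\le L^2(\xkp)$ is legitimate.
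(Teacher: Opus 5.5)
Your proposal is correct and follows essentially the same route as the paper. The shared steps are: the three-point inequality for the $(\frac{1}{t_k}-\rho)\omega$-relatively strongly convex subproblem tested at $\hat x^k$; one-sided accuracy at $\hat x^k$; the independent-copy bound $f(x^{k+1})\le f(x^k)+\tau D_\omega(x^{k+1},x^k)+L(x^{k+1})\sqrt{D_\omega(x^{k+1},x^k)}$, paired with Young's inequality against $\frac{1-t_k\tau}{t_k}D_\omega(x^{k+1},x^k)$; the prox-growth inequality for $F$; and the comparison $F_\lambda^\omega(x^{k+1})\le F(\hat x^k)+\frac1\lambda D_\omega(\hat x^k,x^{k+1})$.

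One bookkeeping correction: after you multiply by $\frac{t_k}{\lambda(1-t_k\rho)}$, a term $\frac{t_k\rho}{\lambda(1-t_k\rho)}D_\omega(\hat x^k,x^k)$ survives. The net coefficient is therefore exactly $-\frac{t_k}{\lambda(1-t_k\rho)}\bigl(\frac1\lambda-\tau-\rho\bigr)$, not $-\frac{t_k}{\lambda(1-t_k\rho)}\bigl(\frac1\lambda-\tau\bigr)$. This still equals the target coefficient, so there is no slack to spend, but nothing breaks.
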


Summing \eqref{eq:model-recursion} over $k$ and substituting the prescribed parameter choices yields \Cref{3thm:main}. This completes the analysis of BSMM with sampled objective models and an exactly evaluated Bregman distance.

\section{Model-Based Minimization with Stochastic Bregman Distances} \label{sec:SBMM}

The BSMM framework in Section~\ref{sec:BSMM} allows the objective model to be stochastic but assumes that the Bregman distance $D_\omega$ is evaluated exactly. This assumption is reasonable when the distance generating function $\omega$ has a simple closed form. However, when $\omega$ is itself represented as an expectation or a large finite sum, evaluating $\omega$ and $\nabla\omega$ exactly at every iteration can remove much of the computational advantage gained from sampling the objective model. We therefore extend the previous framework by allowing the reference geometry to be sampled as well.

Suppose that we have access to stochastic realizations of the distance generating function and its gradient, denoted by $\some(\cdot;\xi)$ and $\snome(\cdot;\xi)$, respectively. For a sample $\xi$, define the corresponding stochastic Bregman distance by
\begin{align*}
    \sdome(x,y;\xi) \triangleq \some(x;\xi) - \some(y;\xi) - \la \snome(y;\xi), x-y\ra.
\end{align*}

At each iteration $k$, we draw a minibatch of i.i.d. samples $\Bcal_k \triangleq\{\xi_{k,1}, \ldots, \xi_{k,m_k}\}$ of size $m_k$. We define the stochastic approximations of the objective model surrogate and the Bregman distance as the average
\begin{align*}
    \sfxk(x;\Bcal_k) \triangleq \frac{1}{m_k}\sum_{i=1}^{m_k}\sfxk(x;\xi_{k,i}), \qquad \sdome (x, y;\Bcal_k) \triangleq \frac{1}{m_k}\sum_{i=1}^{m_k} \sdome(x,y;\xi_{k,i}).
\end{align*}
We also define \begin{align*}
\some(x;\Bcal_k):=\frac1{m_k}\sum_{i=1}^{m_k}\some(x;\xi_{k,i}),\qquad \snome(x;\Bcal_k):=\frac1{m_k}\sum_{i=1}^{m_k}\snome(x;\xi_{k,i}).
\end{align*} Using these minibatch estimators, we perform the following update
\begin{align}\label{eq:sbmm_update}
    x^{k+1} = \argmin_{y\in\R^d} \left\{ \sfxk(y;\Bcal_k) + r(y) + \frac{1}{t_k} \sdome(y, x^k; \Bcal_k) \right\}.
\end{align}
We refer to this procedure as the \textbf{S}tochastic \textbf{B}regman \textbf{M}odel-based \textbf{M}inimization (SBMM) method.
In addition, we define the full stochastic composite model as $\sFxk(x;\Bcal_k) \triangleq \sfxk(x;\Bcal_k) + r(x)$, and denote $\E_k[\cdot] \triangleq \E[\cdot \mid \sigma_k]$, where $\sigma_k$ is the $\sigma$-algebra generated by the history of minibatches $\{\Bcal_i\}_{i=0}^{k-1}$.

\subsection{Preliminaries and Assumptions}
Here we state the basic conditions of the stochastic objective model and the sampled distance generating function. These conditions extend the model assumptions of Section~\ref{sec:BSMM} by requiring the Bregman geometry and the relative weak convexity property to hold for each stochastic sample, while preserving the corresponding deterministic quantities in expectation. The additional conditions needed to control the composite
model and the sampled Bregman distance along the iterates are also introduced separately below.

\begin{assumption}\label{asp:model-sb-basic}
    Fix a probability space $(\Omega,\mathcal F,P)$ and equip $\R^d$ with the Borel $\sigma$-algebra. Assume that $r$ is proper and lower semicontinuous, that $(x,y,\xi)\mapsto f_x(y;\xi)$ is jointly measurable, and that $f_x(\cdot;\xi)+r(\cdot)$ is proper and lower semicontinuous for every $x\in\R^d$ and almost every $\xi\in\Omega$. Suppose there exist constants $\rho,\tau>0$ and a differentiable function $\omega:\R^d\to\R$. We further assume that $F^*:=\inf_{x\in\R^d}F(x)>-\infty$.
    \begin{enumerate}[label={\textbf{A(\arabic*):}}, ref={Assumption \ref{asp:model-sb-basic}.A(\arabic*)}]
        \item \label{asp:model-sb-dgf-unbiased} \textbf{Unbiased DGF:}
The maps $(x,\xi)\mapsto\some(x;\xi)$ and $(x,\xi)\mapsto\snome(x;\xi)$ are jointly measurable. For almost every $\xi\in\Omega$, the function $\some(\cdot;\xi)$ is differentiable and $1$-strongly convex, with $\snome(x;\xi)=\nabla\some(x;\xi)$. Moreover,
\begin{align*}
\E_\xi[|\some(x;\xi)|]+\E_\xi[\|\snome(x;\xi)\|]<\infty,\qquad
\E_\xi[\some(x;\xi)]=\omega(x),\qquad
\E_\xi[\snome(x;\xi)]=\nabla\omega(x).
\end{align*}
Consequently, $\E_\xi[\sdome(x,y;\xi)]=D_\omega(x,y)$.
        
        \item \label{asp:model-sb-f} \textbf{One-sided accuracy:} For any $x,y\in\R^d$,
        \begin{align*}
            &\E_\xi[\sfx(x;\xi)] =  f(x), \qquad
            &\E_\xi[\sfx(y;\xi)-f(y)] \leq \tau \min\{D_\omega(x,y), D_\omega(y, x)\}. 
        \end{align*}
        
        \item \label{asp:model-sb-rwc} \textbf{Relative Weak Convexity:} For any $x\in\rr^d$ and almost every $\xi\in\Omega$, the function $\sfx(\cdot; \xi) + r(\cdot)$ is $\rho\some(\cdot;\xi)$-relatively weakly convex.
    \end{enumerate}
\end{assumption}

The first part of \Cref{asp:model-sb-basic} makes $\sdome(\cdot,\cdot;\xi)$ an unbiased stochastic counterpart of $D_\omega$. The second part is the same one-sided model accuracy condition used in Section~\ref{sec:BSMM}. The third part changes the weak convexity requirement from the deterministic geometry $\omega$ to the sampled geometry $\some(\cdot;\xi)$, which matches the Bregman term used in \eqref{eq:sbmm_update}.

The next assumption is the Lipschitz-type condition on the composite model. Since the subproblem contains both $f_{x^k}(\cdot;\xi)$ and $r$, the condition is imposed on the full composite model rather than on the objective model alone.

\begin{assumption}\label{asp:model-sb-lip}
There exist a measurable function $\sml:\R^d\times\Omega\to\R_+$ and a constant $L>0$ such that
\begin{align*}
\sqrt{\E_\xi[\sml^2(x;\xi)]}\leq L
\end{align*}
and, for all $x,y\in\R^d$ and almost every $\xi\in\Omega$,
\begin{align}
\sFx(x;\xi)-\sFx(y;\xi)
\leq
\sml(x;\xi)
\sqrt{\min\{\sdome(x,y;\xi),\sdome(y,x;\xi)\}}.
\label{eq:model-sb-asp-lip-left}
\end{align}
\end{assumption}

\begin{remark}
\Cref{asp:model-sb-basic} and \Cref{asp:model-sb-lip} introduce the differences compared to our stochastic model surrogate setting in \Cref{asp:model-basic} and \Cref{asp:ub} in the following two ways:
\begin{itemize}
    \item The relatively weak convexity property must now hold with respect to the \textit{stochastic} DGF $\some(\cdot;\xi)$, rather than the deterministic $\omega(\cdot)$. This ensures that the randomized subproblems in \eqref{eq:sbmm_update} remain well-defined at every iteration.
    \item Due to coupled noise in both the geometry and the objective, we require stronger relative Lipschitz conditions on the \textit{entire} composite model $\sfx+r$, rather than on $\sfx$ alone.
\end{itemize} 
\end{remark}

The assumptions above define a valid subproblem \eqref{eq:sbmm_update}, but they do not by themselves compare the sampled distance with the exact Bregman distance, which is needed since the stationarity measure is still defined using $D_\omega$. We therefore impose the following lower bound condition along the iterates.

\begin{assumption}[Lower Bound on Stochastic Bregman Distance] \label{asp:model-sb-sbg}
    For each iteration $k$ with minibatch size $m_k$, let $\Bcal_k$ be a minibatch and let $x^{k+1}$ be the resulting SBMM update. There exists a function $c:\mathbb N\to(0,1]$ satisfying $\lim_{m\to\infty}c(m)=1$ such that
\begin{align}\label{eq:sbg_conditional}
    \E_k\left[\sdome(\hat x^k,x^{k+1};\Bcal_k)\right]\geq c(m_k)\E_k\left[D_\omega(\hat x^k,x^{k+1})\right].
\end{align}
\end{assumption}
The role of \Cref{asp:model-sb-sbg} is to compare the sampled Bregman distance with the exact Bregman distance along the iterates. In practice, a sufficiently large batch size $m_k$ allows $c(m_k)$ to approach $1$, so that the sampled Bregman distance approaches the exact Bregman distance in the sense of \eqref{eq:sbg_conditional}. Since verifying this assumption directly can be non-trivial due to the dependence between the iterates and the minibatches, we note that it is naturally implied if we assume uniform expectation lower bounds for fixed minibatches, or high-probability geometric concentration bounds across the domain space.

\begin{remark}[Sufficient conditions for \Cref{asp:model-sb-sbg}]
A stronger condition that implies \Cref{asp:model-sb-sbg} is a uniform lower bound on a set containing the iterates. Suppose there exists a set $\mathcal X\subseteq\R^d$ such that, for each minibatch size $m$ and every minibatch $\Bcal$ of size $m$,
\begin{align*}
\sdome(u,v;\Bcal)\geq c(m)D_\omega(u,v)\qquad\text{for all }u,v\in\mathcal X.
\end{align*}
Then \Cref{asp:model-sb-sbg} follows by applying this inequality with $u=\hat x^k$ and $v=x^{k+1}$ and taking conditional expectation.

As a second sufficient condition, \Cref{asp:model-sb-sbg} can be verified through a high-probability lower bound. Suppose that for each minibatch size $m$, there exist $\epsilon_m,\theta_m\in[0,1)$ and an event $\mathcal G_m$ such that
\begin{align*}
\sdome(u,v;\Bcal)\geq(1-\epsilon_m)D_\omega(u,v)
\qquad\text{for all }u,v\in\mathcal X \quad\text{on }\mathcal G_m,
\end{align*}
and
\begin{align*}
\E_k\left[D_\omega(\hat x^k,x^{k+1})\mathbf 1_{\mathcal G_m^c}\right]
\leq \theta_m \E_k\left[ D_\omega(\hat x^k,x^{k+1})\right].
\end{align*}
Therefore, if $(1-\epsilon_m)(1-\theta_m)\to 1$, then \Cref{asp:model-sb-sbg} holds with $c(m)=(1-\epsilon_m)(1-\theta_m).$
\end{remark}

These assumptions ensure that the randomized subproblem in \eqref{eq:sbmm_update} is well-defined and that the error introduced by sampling the Bregman distance remains bounded. With these conditions in place, we now state the formal algorithm.

\subsection{Algorithm SBMM}
We now present the formal minibatch version of the update in \eqref{eq:sbmm_update} in \Cref{alg:sbmm}. At each iteration, the same minibatch is used to form both the stochastic model and the sampled Bregman distance.

\begin{algorithm}[h]
    \caption{Stochastic Bregman Model-based Minimization (SBMM)}
    \label{alg:sbmm}
    {\bf Input:}  Initial point $x^0\in\dom r$, step sizes $\{t_k\}_{k\geq0}$, batch sizes $\{m_k\}_{k\geq 0}$, and maximum iterations $K$.
	\begin{algorithmic}[1]
		\item[1:] {\bf for} {$k=0,1,\ldots, K-1$} {\bf do}
		\item[2:] $\quad$Sample a minibatch $\Bcal_k =\{\xi_{k,1}, \xi_{k,2}, \ldots, \xi_{k,m_k}\}$;
		\item[3:] $\quad$Update  
		\begin{align*}
            \xkp = \argmin\limits_y \frac1{m_k}\sum_{i=1}^{m_k}\left(\sfxk (y;\xi_{k,i}) + r(y) + \frac{1}{t_k} \sdome (y,\xk;\xi_{k,i})\right);
        \end{align*}
		\item[4:] {\bf end for}
		\item[5:] Sample $\kbar$ from $\{0,1,\ldots, K-1\}$ according to the discrete distribution:
        \begin{align*}
            \mathbb P(\kbar=k)=\frac{t_k}{\sum_{i=0}^{K-1}t_i}.
        \end{align*}
	\end{algorithmic}
    {\bf Return} $x^{\bar{k}}$.
\end{algorithm}

At each step, the algorithm constructs the objective surrogate and the reference geometry using the average over the current minibatch $\Bcal_k$, then evaluates the update by minimizing this stochastic composite function. The final output is randomly selected in iterates $x^{\bar{k}}$, where the selection probability is proportional to the step size $t_k$. 

With the SBMM algorithm and the structural assumptions on the stochastic oracles, we now proceed to establish its theoretical properties. The following section analyzes the convergence of the sequence generated by \Cref{alg:sbmm} and bounds the expected stationarity measure.

\subsection{Convergence Analysis}

We now establish the convergence properties of \Cref{alg:sbmm}. \Cref{thm:model-sbmm} demonstrates that under an appropriate step size and a sufficiently large minibatch, \Cref{alg:sbmm} obtains the standard $\mathcal{O}(\varepsilon^{-4})$ convergence rate, accommodating the variance introduced by the sampled Bregman distance.

\begin{theorem}\label{thm:model-sbmm}
    Suppose that \Cref{asp:model-sb-basic}, \Cref{asp:model-sb-sbg} and \Cref{asp:model-sb-lip} hold. Fix $\lambda\in\left(0,\frac{1}{\rho+\tau}\right)$. Assume that $m_k\equiv m$ and $t_k\equiv t$, where $\frac{\lambda(1-c(m))}{1-\lambda\tau-\lambda\rho c(m)}<t<\frac1\rho.$
    Then the point $x^{\bar k}$ returned by \Cref{alg:sbmm}, where $\mathbb P(\bar k=k)=1/K$, satisfies
    \begin{align}\label{eq:sbmm_final_bound_explicit}
    \E\left[\dsymomelam(\hat x^{\bar k},x^{\bar k})\right]\leq\frac{\benvF(x^0)-F^*}{\Gamma K}+\frac{t^2L^2}{4\lambda c(m)(1-t\rho)\Gamma},
    \end{align}
    where $\Gamma:=\dfrac{1}{c(m)}\left(\dfrac{t(1-\lambda(\rho+\tau))}{1-t\rho}-\lambda(1-c(m))\right).$
    In particular, let $K>1$, set $t=\frac{1}{\rho\sqrt K}$, and suppose that the minibatch size $m=m(K)$ satisfies $1-c(m)\leq\frac{1-\lambda(\rho+\tau)}{2\lambda\rho\sqrt K}$, then
    \begin{align}\label{eq:sbmm_rate_O1sqrtK_explicit}
    \E\left[\dsymomelam(\hat x^{\bar k},x^{\bar k})\right]\leq\frac{1}{\sqrt K}\left(\frac{2\rho\left(\benvF(x^0)-F^*\right)}{1-\lambda(\rho+\tau)}+\frac{L^2}{2\lambda\rho\left(1-\lambda(\rho+\tau)\right)}\right).
    \end{align}
\end{theorem}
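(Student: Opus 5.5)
The proof will go through a one-step expected descent inequality for $\benvF$ along SBMM iterates, modeled on \Cref{lem:model-recursion}, followed by telescoping. Fix $k$ and write $\hat x^k=\bproxF(x^k)$. The comparison point will be $\hat x^k$ in the sampled subproblem \eqref{eq:sbmm_update}. By \ref{asp:model-sb-rwc}, the averaged objective
\[
y\mapsto \sFxk(y;\Bcal_k)+\tfrac1{t}\sdome(y,x^k;\Bcal_k)
\]
is relatively strongly convex, with modulus $(\tfrac1t-\rho)$, with respect to the averaged sampled DGF $\some(\cdot;\Bcal_k)$. Each $\some(\cdot;\xi)$ is $1$-strongly convex, and the Bregman distance of the average equals $\sdome(\cdot,\cdot;\Bcal_k)$. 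The three-point inequality then gives
\[
\sFxk(x^{k+1};\Bcal_k)+\tfrac1t\sdome(x^{k+1},x^k;\Bcal_k)+\big(\tfrac1t-\rho\big)\sdome(\hat x^k,x^{k+1};\Bcal_k)\le \sFxk(\hat x^k;\Bcal_k)+\tfrac1t\sdome(\hat x^k,x^k;\Bcal_k).
\]

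Next, use $\benvF(x^{k+1})\le F(\hat x^k)+\frac1\lambda D_\omega(\hat x^k,x^{k+1})$ together with the three-point identity for $D_\omega$, to bound $D_\omega(\hat x^k,x^{k+1})$ by $\sdome(\hat x^k,x^{k+1};\Bcal_k)$. In expectation this is exactly what \Cref{asp:model-sb-sbg} provides, at the cost of a factor $1/c(m)$. That factor produces the term $\lambda(1-c(m))$ in $\Gamma$ and the $c(m)$ in the denominators.

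Taking $\E_k$ in the three-point inequality will use three facts. First, unbiasedness of $\sdome$ handles the terms whose arguments are $\sigma_k$-measurable, such as $\sdome(\hat x^k,x^k;\Bcal_k)$. Second, the one-sided accuracy \ref{asp:model-sb-f} gives $\E_k\sFxk(\hat x^k)\le F(\hat x^k)+\tau D_\omega(\hat x^k,x^k)$. Third, the term $\sFxk(x^{k+1})$ is handled by \Cref{asp:model-sb-lip}: write $\sFxk(x^{k+1})\ge \sFxk(x^k)-\sml\sqrt{\sdome(x^{k+1},x^k)}$, absorb the square root into $\tfrac1t\sdome(x^{k+1},x^k)$ by Young's inequality (giving $\tfrac{t}{4}\sml^2$), and use $\E_k\sFxk(x^k;\Bcal_k)=F(x^k)$. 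The $t^2L^2/4$ term arises this way.

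Finally, combine with the optimality of $\hat x^k$ for the envelope, namely $F(\hat x^k)+\frac1\lambda D_\omega(\hat x^k,x^k)\le F(x^k)$, used twice via relative strong convexity with modulus $(\frac1\lambda-\rho-\tau)$ of $F+\frac1\lambda D_\omega(\cdot,x^k)$ (\Cref{lem:model-exp-rwc}). The aim is
\[
\Gamma\,\lambda^{-0}\,\dsymomelam(\hat x^k,x^k)\lesssim \benvF(x^k)-\E_k\benvF(x^{k+1})+\frac{t^2L^2}{4\lambda c(m)(1-t\rho)},
\]
after multiplying by $\frac{t}{\lambda(1-t\rho)}$ and matching coefficients. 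The symmetrized distance appears because $D_\omega(x^k,\hat x^k)+D_\omega(\hat x^k,x^k)$ comes from the strong-convexity gap of the envelope problem.

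The main obstacle is this coefficient bookkeeping. The stochastic geometry mismatch $\sdome$ versus $D_\omega$ enters with the wrong sign, and it can only be controlled through the lower bound $c(m)$. This subtracts $\lambda(1-c(m))$ from the useful descent coefficient. The hypothesis $t>\lambda(1-c(m))/(1-\lambda\tau-\lambda\rho c(m))$ is exactly what makes $\Gamma>0$, and I would derive the inequality while tracking this constant.

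Once the recursion holds, take total expectations, sum over $k=0,\dots,K-1$, telescope, and bound $\benvF(x^K)\ge F^*$. Dividing by $\Gamma K$, with the uniform choice of $\bar k$, gives \eqref{eq:sbmm_final_bound_explicit}.

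For \eqref{eq:sbmm_rate_O1sqrtK_explicit}, plug in $t=1/(\rho\sqrt K)$, so that $1-t\rho\ge$ a constant when $K>1$. The batch condition makes $\lambda(1-c(m))\le \frac{t(1-\lambda(\rho+\tau))}{2}$, hence $\Gamma\ge \frac{t(1-\lambda(\rho+\tau))}{2c(m)}$. Substituting this lower bound and simplifying using $c(m)\le1$ yields the stated $O(1/\sqrt K)$ constants.
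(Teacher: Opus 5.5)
Your argument for the one-step recursion is the paper's proof of \Cref{lem:model-sb-1}, and telescoping it gives \eqref{eq:sbmm_final_bound_explicit} as the paper does. The ingredients match: the three-point inequality for the sampled subproblem at $\hat x^k$, Young's inequality absorbing $\sml\sqrt{\sdome(x^{k+1},x^k)}$ into $\frac1t\sdome(x^{k+1},x^k;\Bcal_k)$, one-sided accuracy at $\hat x^k$, and \eqref{eq:bregman-prox-growth} for $F$ with $z=x^k$. The step then closes with \Cref{asp:model-sb-sbg}, the bound $D_\omega(\hat x^k,x^k)\le\lambda^2\dsymomelam(\hat x^k,x^k)$, and the envelope comparison. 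One small point: \Cref{lem:model-exp-rwc} is stated for the deterministic geometry. Here you first take expectations in \ref{asp:model-sb-rwc} using \ref{asp:model-sb-dgf-unbiased} and then repeat its argument, as the paper does.

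The specialization, however, does not give the stated constant. Your bound $\Gamma\ge t(1-\lambda(\rho+\tau))/(2c(m))$ is true but discards the factor $(1-t\rho)^{-1}$. As a result, the second term of \eqref{eq:sbmm_final_bound_explicit} only becomes
\begin{align*}
\frac{tL^2}{2\lambda(1-t\rho)(1-\lambda(\rho+\tau))}=\frac{L^2}{2\lambda\rho(\sqrt K-1)(1-\lambda(\rho+\tau))}.
\end{align*}
Neither $c(m)\le 1$ nor a constant lower bound on $1-t\rho$ (as you suggest) brings this back to the stated $\frac{L^2}{2\lambda\rho\sqrt K(1-\lambda(\rho+\tau))}$. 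Keep the factor instead. Since the batch condition with $t=1/(\rho\sqrt K)$ gives $\lambda(1-c(m))\le t(1-\lambda(\rho+\tau))/2$,
\begin{align*}
\Gamma\ge\frac{t(1-\lambda(\rho+\tau))}{c(m)}\Bigl(\frac{1}{1-t\rho}-\frac12\Bigr)\ge\frac{t(1-\lambda(\rho+\tau))}{2c(m)(1-t\rho)}.
\end{align*}
This bound certifies $\Gamma>0$, so the hypothesis on $t$ holds. It also cancels $(1-t\rho)$ exactly in the $L^2$ term, and it costs only $c(m)\le1$ and $1-t\rho\le1$ in the first term. Together these yield \eqref{eq:sbmm_rate_O1sqrtK_explicit}.
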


The main convergence result relies on a one-step descent lemma. The following \Cref{lem:model-sb-1} bounds the expected decrease of the Moreau envelope along the algorithm's trajectory, serving as the foundational step for the proof of \Cref{thm:model-sbmm}.

\begin{lemma}\label{lem:model-sb-1}
    Suppose that \Cref{asp:model-sb-basic}, \Cref{asp:model-sb-sbg} and \Cref{asp:model-sb-lip} hold. Let $\lambda\in\left(0,\frac{1}{\rho+\tau}\right)$ and $0<t_k<1/\rho$. Then
    \begin{align}\label{eq:lem_model_sb_1}
    &\quad\ \E_k\left[\benvF(\xkp)\right]\nonumber\\
    \leq&\ \benvF(\xk)-\frac{1}{c(m_k)}\left(\frac{t_k(1-\lambda(\rho+\tau))}{1-t_k\rho}-\lambda(1-c(m_k))\right)\dsymomelam(\xhk,\xk)+\frac{t_k^2L^2}{4\lambda c(m_k)(1-t_k\rho)}.
    \end{align}
\end{lemma}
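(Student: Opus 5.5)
The plan is to bound $\benvF(\xkp)$ from above by plugging the feasible point $\xhk$ into the envelope's definition, then control the sampled distance $\sdome(\xhk,\xkp;\bcalk)$ through the optimality of $\xkp$ in the SBMM subproblem, and finally take conditional expectations, using the unbiasedness assumptions and \Cref{asp:model-sb-sbg}. Since $\xhk$ is $\sigma_k$-measurable, $\E_k$ commutes with it.

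\textbf{Step 1 (envelope upper bound).} By the definition \eqref{eq:def_of_me},
\[
\benvF(\xkp)\le F(\xhk)+\tfrac1\lambda D_\omega(\xhk,\xkp).
\]
Taking $\E_k$ and applying \Cref{asp:model-sb-sbg} gives
\[
\E_k[\benvF(\xkp)]\le F(\xhk)+\tfrac{1}{\lambda c(m_k)}\E_k[\sdome(\xhk,\xkp;\bcalk)].
\]
We also record $F(\xhk)=\benvF(\xk)-\tfrac1\lambda D_\omega(\xhk,\xk)$.

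\textbf{Step 2 (three-point inequality for the sampled subproblem).} By \Cref{asp:model-sb-basic}.A(3), each $\sfxk(\cdot;\xiki)+r$ is $\rho\some(\cdot;\xiki)$-relatively weakly convex. Hence the averaged subproblem objective $\Phi_k(y):=\sFxk(y;\bcalk)+\tfrac1{t_k}\sdome(y,\xk;\bcalk)$ satisfies: $\Phi_k-(\tfrac1{t_k}-\rho)\some(\cdot;\bcalk)$ is convex. Optimality of $\xkp$ then yields
\[
\Phi_k(\xhk)\ge\Phi_k(\xkp)+\big(\tfrac1{t_k}-\rho\big)\sdome(\xhk,\xkp;\bcalk).
\]
Rearranging,
\[
\sdome(\xhk,\xkp;\bcalk)\le\tfrac{t_k}{1-t_k\rho}\Big[\sFxk(\xhk;\bcalk)-\sFxk(\xkp;\bcalk)+\tfrac1{t_k}\sdome(\xhk,\xk;\bcalk)-\tfrac1{t_k}\sdome(\xkp,\xk;\bcalk)\Big].
\]
We split $\sFxk(\xhk)-\sFxk(\xkp)$ as $[\sFxk(\xhk)-\sFxk(\xk)]+[\sFxk(\xk)-\sFxk(\xkp)]$.

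For the second bracket, apply \eqref{eq:model-sb-asp-lip-left} samplewise with the $\sdome(\xkp,\xk;\xiki)$ branch of the min, followed by Young's inequality:
\[
\sml(\xk;\xiki)\sqrt{\sdome(\xkp,\xk;\xiki)}\le \tfrac{t_k}{4}\sml^2(\xk;\xiki)+\tfrac1{t_k}\sdome(\xkp,\xk;\xiki).
\]
Averaging over the batch cancels the $-\tfrac1{t_k}\sdome(\xkp,\xk;\bcalk)$ term. After $\E_k$, this contributes at most $t_kL^2/4$, hence $\tfrac{t_k^2L^2}{4(1-t_k\rho)}$ overall. Dividing by $\lambda c(m_k)$ reproduces exactly the last term of \eqref{eq:lem_model_sb_1}.

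\textbf{Step 3 (expectations and the envelope's optimality).} By one-sided accuracy and unbiasedness of the DGF (\Cref{asp:model-sb-basic}),
\[
\E_k[\sFxk(\xhk;\bcalk)-\sFxk(\xk;\bcalk)]\le F(\xhk)-F(\xk)+\tau D_\omega(\xhk,\xk),
\qquad
\E_k[\sdome(\xhk,\xk;\bcalk)]=D_\omega(\xhk,\xk).
\]
To eliminate $F(\xhk)-F(\xk)$, we use that $F$ is $(\rho+\tau)\omega$-relatively weakly convex (as in \Cref{lem:model-exp-rwc}). Then $y\mapsto F(y)+\tfrac1\lambda D_\omega(y,\xk)$ is $(\tfrac1\lambda-\rho-\tau)$-relatively strongly convex and minimized at $\xhk$. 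Comparing its values at $\xk$ and $\xhk$ gives
\[
F(\xhk)-F(\xk)\le-\tfrac1\lambda D_\omega(\xhk,\xk)-\big(\tfrac1\lambda-\rho-\tau\big)D_\omega(\xk,\xhk).
\]

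\textbf{Step 4 (collecting coefficients).} Substituting Steps 2--3 into Step 1, the coefficient of $D_\omega(\xhk,\xk)$ becomes
\[
-\tfrac1\lambda+\tfrac{t_k}{\lambda c(1-t_k\rho)}\big(\tau-\tfrac1\lambda+\tfrac1{t_k}\big),
\]
and the coefficient of $D_\omega(\xk,\xhk)$ is $-\tfrac{t_k}{\lambda c(1-t_k\rho)}\big(\tfrac1\lambda-\rho-\tau\big)$. The remaining task is to show that both are bounded above by $-\lambda\Gamma_k$, where $\Gamma_k=\tfrac1{c}\big(\tfrac{t_k(1-\lambda(\rho+\tau))}{1-t_k\rho}-\lambda(1-c)\big)$. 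This turns them into the symmetrized measure $\dsymomelam$.

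I expect this coefficient matching to be the main obstacle. The $\tfrac{1}{t_k}D_\omega(\xhk,\xk)$ contribution carries the factor $\tfrac{1}{c(1-t_k\rho)}>1$, which is where the $\lambda(1-c)$ loss originates, and the two Bregman directions receive different coefficients. I would first try to compute both coefficients exactly and bound each by the smaller of the two, which should give the $\Gamma_k$ in the statement. If the exact forms do not align, I would instead rebalance by applying Step 3's envelope inequality with a weight chosen so that the two coefficients coincide.
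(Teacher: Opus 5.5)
Your proposal is correct and follows essentially the paper's proof. It uses the same three-point inequality for the sampled subproblem at $y=\xhk$, the same Young bound producing $t_kL^2/4$, the same one-sided accuracy and prox-growth estimates, and the same envelope comparison at $\xhk$; the only difference is that you invoke \Cref{asp:model-sb-sbg} at the outset, whereas the paper invokes it after forming $\E_k[\sdome(\xhk,\xkp;\bcalk)]-D_\omega(\xhk,\xk)$.

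The coefficient matching you flag closes once the target is normalized as $-\Gamma_k/\lambda^2$, not $-\lambda\Gamma_k$, since $\dsymomelam$ carries a factor $\lambda^{-2}$. Writing $1+t_k\tau-t_k/\lambda=(1-t_k\rho)-\frac{t_k}{\lambda}(1-\lambda(\rho+\tau))$ shows that your $D_\omega(\xhk,\xk)$ coefficient equals exactly $-\Gamma_k/\lambda^2$. Your $D_\omega(\xk,\xhk)$ coefficient, $-\frac{t_k(1-\lambda(\rho+\tau))}{\lambda^2c(1-t_k\rho)}$, is smaller than that by $\frac{1-c}{\lambda c}$. Bounding both coefficients by the larger (less negative) one therefore gives the lemma exactly, with no reweighting needed; this step is equivalent to the paper's use of $D_\omega(\xhk,\xk)\le\lambda^2\dsymomelam(\xhk,\xk)$.
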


Lemma~\ref{lem:model-sb-1} provides the recursive relationship for the conditional expectation of the Moreau envelope across successive iterations. Summing this inequality over the optimization horizon balances the accumulated objective descent against the cumulative variance from both the objective surrogate and the stochastic reference geometry. This direct summation yields the explicit bounds stated in Theorem~\ref{thm:model-sbmm}, establishing that the stochastic Bregman geometry can be tightly integrated into model-based minimization under the minibatch lower-bound condition.

\section{Numerical Experiments}
In this section, we present numerical experiments to validate our theoretical findings and evaluate the empirical performance of the proposed methods. Specifically, we apply BPG to the $\ell_1-\ell_p$ regularized compressed sensing problem and SBMM to the robust $L_p$ PCA problem. We compare the proposed methods with standard problem-specific baselines. The experiments are conducted  on a MacBook Pro with an Apple M3 Pro chip. The compressed sensing experiments are implemented in MATLAB
R2024b, while the PCA experiments are implemented in Julia. 

\subsection{Compressed Sensing}
We first apply the Bregman proximal subgradient method to the compressed sensing problem in \cite{wang2010sparse}. 
Consider the following $\ell_1-\ell_p$ regularized recovery problem:
\begin{align*}
    \min_{x\in\rr^n} \frac{1}{2}\|Ax-b\|_2^2 + \lambda_1\|x\|_1 - \lambda_2 \|x\|_p.
\end{align*}

\paragraph{Algorithm.}
We apply \Cref{alg:comp_bpg} by setting the objective surrogate as $f(x) = \frac{1}{2}\|Ax-b\|_2^2$ and the regularizer as $r(x) = \lambda_1\|x\|_1 - \lambda_2 \|x\|_p$. The step sizes are chosen as $t_k \equiv t = \lambda/\sqrt{\nu}$, and the distance generating function is defined as $\omega(x) = \lambda_2 \bigl(\|x\|_p + \frac{1}{2}\|x\|_2^2\bigr)$. The subproblems in \eqref{eq:comp-alg-update} are solved using CVX with the Gurobi backend. 

\paragraph{Baselines.}
We compare the performance of our algorithm with two standard approaches: the Lasso model \citep{10.1111/j.2517-6161.1996.tb02080.x},
\begin{align*}
    \min_{x\in\rr^n} \frac{1}{2}\|Ax-b\|_2^2 + \lambda_1\|x\|_1,
\end{align*}
and the Basis Pursuit (BP) model \citep{chen2001atomic},
\begin{align*}
\min_{x\in\R^n}\|x\|_1\quad\text{s.t.}\quad Ax=b.
\end{align*}

\paragraph{Experimental Setup.}
To evaluate the signal recovery performance, we generate synthetic data through the following procedure. The sensing matrix $A \in \rr^{m\times n}$ is drawn from a standard Gaussian distribution with $m=32$ and $n=256$. The true sparse signal $x^* \in \rr^n$ is generated with a sparsity level $\|x^*\|_0 \in \{8,16,24,32\}$, where the nonzero entries are sampled from a standard normal distribution. The observation vector is computed directly as $b = Ax^*$. 

We test the regularization parameters $\lambda_1 = \lambda_2 \in \{0.1, 0.01, 0.001\}$ and vary $p$ across the set $\{1, 1.1, 1.2, \dots, 2\}$ to assess how the $\ell_1-\ell_p$ penalty influences recovery accuracy. The initial point $x_0$ is generated by solving the standard Lasso problem using the built-in MATLAB function \texttt{lasso(A,b,'lambda',lambda)}. The algorithm terminates when the iterates satisfy $\|x_{\nu+1}-x_\nu\|_2/\|x_\nu\|\leq 10^{-6}$ for any $\nu$, or when it reaches a maximum of $1000$ iterations. We repeat each experimental configuration 5 times and report the average and standard deviation of the relative error of the final output $x_{\text{final}}$, defined as:
\begin{align*}
    \text{Relative Error} = \frac{\|x_{\text{final}} - x^*\|_2}{\|x^*\|_2}. 
\end{align*}

\paragraph{Results.}
\begin{figure}[ht]
    \centering
    \includegraphics[width=\linewidth]{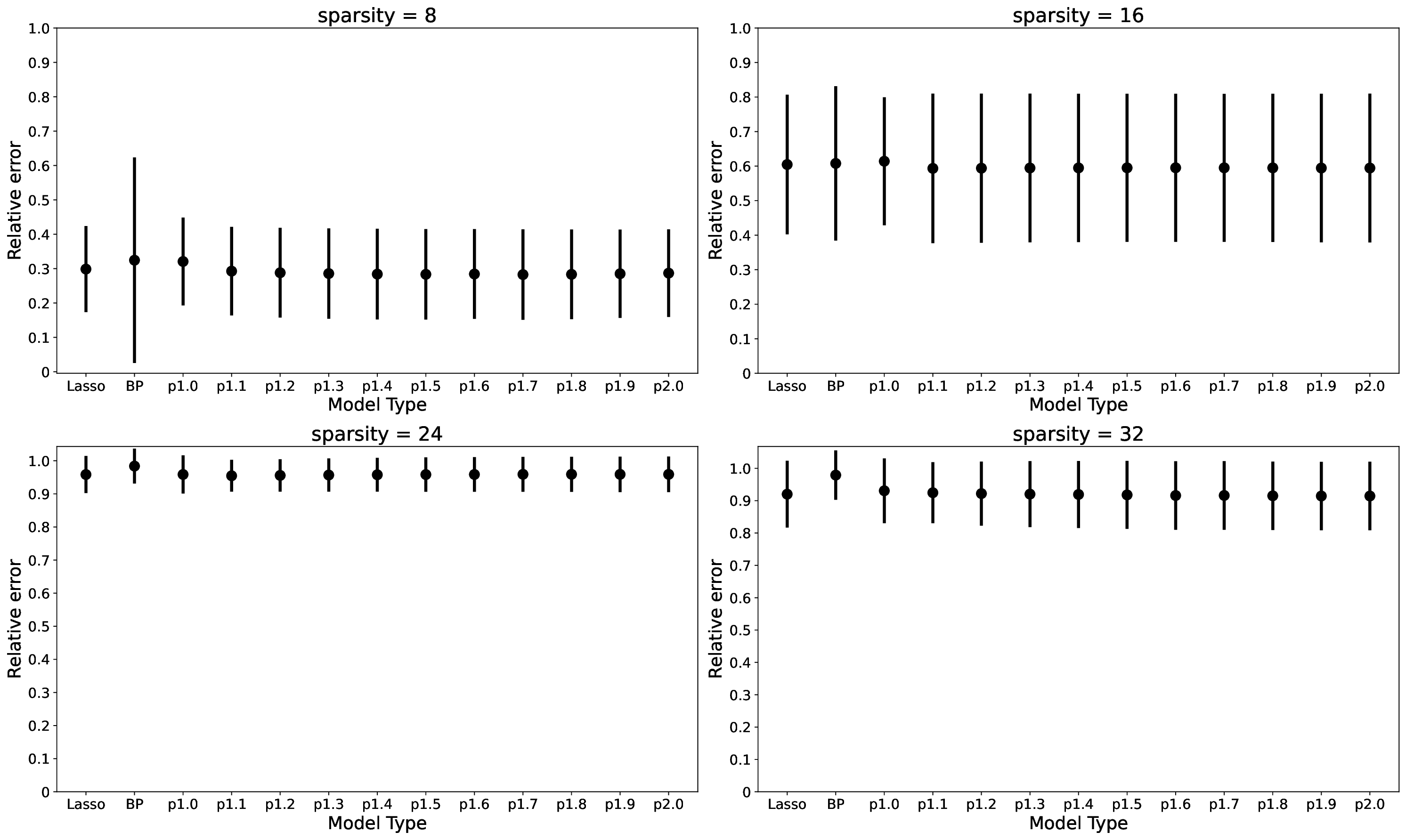}
    \caption{Performance of the Bregman Proximal Subgradient Method on the Compressed Sensing Problem.}
    \label{fig:cs_m32}
\end{figure}

We show the results in Figure~\ref{fig:cs_m32}. For the lowest sparsity level $\|x^*\|_0=8$, the $\ell_1-\ell_p$ regularized models solved by BPG achieve a slightly smaller average relative error than Lasso and BP for most choices of $p>1$. A similar but weaker improvement is observed when $\|x^*\|_0=16$. However, the error bars overlap across methods, so the improvement should be interpreted as moderate rather than decisive. 

When the sparsity level increases to $\|x^*\|_0=24$ and $\|x^*\|_0=32$, all methods exhibit a large relative error (close to one in most cases). This indicates that the recovery problem becomes too difficult under the current sampling configuration, and the advantage of the nonconvex $\ell_1-\ell_p$ regularization is no longer clear. Across the tested values of $p$, the performance is relatively stable; changing $p$ has a visible effect in the low-sparsity case but only a minor effect in the more difficult regimes.

\subsection{Principal Component Analysis}\label{subsec:pca}

In this subsection, we apply the SBMM method to the robust $L_p$ Principal Component Analysis (PCA) problem~\citep{kwak2013principal}:
\begin{align}\label{eq:pca-lp}
\max_{W\in\R^{d\times m}}\frac{1}{n}\sum_{i=1}^{n}\|W^\top x_i\|_p^p \quad \text{s.t.} \quad W^\top W=I_m,
\end{align}
where $X=[x_1,\ldots,x_n]\in\R^{d\times n}$ is the centered data matrix and $W\in\R^{d\times m}$ has orthonormal columns. Equivalently, up to a positive scaling, we minimize
\begin{align}\label{eq:pca-lp-min}
\min_{W\in\R^{d\times m}}f(W)=-\frac1p\sum_{i=1}^{n}\sum_{j=1}^{m}|w_j^\top x_i|^p \quad \text{s.t.} \quad W^\top W=I_m.
\end{align}
When $p=2$, this reduces to standard PCA. For $1<p<2$, the objective is continuously differentiable, but its gradient is not Lipschitz near points where $w_j^\top x_i=0$, while the Stiefel constraint makes the problem nonconvex. We use the reference function
\begin{align*}
\omega(W):=\frac1p\sum_{i=1}^{n}\sum_{j=1}^{m}|w_j^\top x_i|^p.
\end{align*}

\paragraph{Subproblem Formulation.}
In this experiment, we take the full dataset as the minibatch, so the stochastic model and the sampled Bregman distance reduce to their full-batch counterparts. At each iteration $k$, we compute the full gradient
$g_k=\nabla f(W_k)$ using all $n$ samples and solve
\begin{align}
\label{eq:pca-subprob}
V^*=\argmin_V\left\{\langle g_k,V\rangle+\frac1{t_k}D_\omega(W_k+V,W_k)\right\}
\quad \text{s.t.} \quad W_k^\top V+V^\top W_k=0,
\end{align}
where $t_k=\tau_0/\sqrt{k}$ for $k=1,2,\ldots$. The subsequent polar retraction $W_{k+1}=\operatorname{Retr}(W_k,V^*)$ preserves the Stiefel constraint. We implement the SBMM update on the Stiefel manifold through a tangent-space subproblem followed by a polar retraction. We formulate \eqref{eq:pca-subprob} as a conic program using the power-cone constraints $(t_{ij},1,z_{ij})\in\mathcal K_{\mathrm{pow}}(1/p)$, where $z_{ij}=(w_j+v_j)^\top x_i$, and solve it using SCS~\citep{ocpb:16}.

\paragraph{Baselines.}
We compare SBMM with the Non-Greedy PCA-$L_p$ method~\citep{kwak2013principal}, which solves~\eqref{eq:pca-lp} using closed-form SVD updates derived from the full gradient $$\nabla_W F_p = \sum_{i=1}^n x_i\left(\mathrm{sign}(W^\top x_i) \odot |W^\top x_i|^{p-1}\right)^\top.$$ We also test a hybrid strategy (SBMM $+$ Non-Greedy) that runs SBMM first, then uses its output to warm-start the Non-Greedy method.

\paragraph{Experimental Setup.}
We generate synthetic data matrices $X = A_1 A_2 A_3$, where $A_1 \in \rr^{d \times d}$, $A_2 \in \rr^{d \times n}$, and $A_3 \in \rr^{n \times n}$ have i.i.d.\ standard Gaussian entries. The rows of $X$ are centered and normalized. To test sensitivity to initialization, we use two random orthonormal initializations and three additional initializations selected from random candidates with relatively high initial objective values.

We test $p \in \{1.1, 1.2, 1.3\}$ across dimensions $(d, n, m) \in \{(5,10,3), (5,50,3), (5,100,3), (10,100,5)\}$, reporting averages over 5 initializations. SBMM runs for $K = 20{,}000$ iterations, and we record the best objective value attained along the trajectory. The initial step size $\tau_0 \in \{1, 2, 4\}$ is tuned for the best average objective. For the hybrid method, SBMM is run for $200$ iterations before switching to the Non-Greedy method, which is run for at most $400$ iterations.

\paragraph{Results.}
Table~\ref{tab:pca-results} reports the objective values and the average iteration counts. 

\begin{table}[h!]
\centering
\caption{Comparison of SBMM, Non-Greedy PCA-$L_p$, and SBMM $+$ Non-Greedy on the PCA-$L_p$ problem~\eqref{eq:pca-lp-min}. Objective values are averaged over 5 initializations. ``Obj'' reports the objective value $f(W)$; ``Iter'' reports the average number of iterations. For SBMM, we report the best objective attained over the trajectory. For the hybrid method, ``Iter'' reports only the subsequent Non-Greedy iterations.}
\label{tab:pca-results}
\small
\begin{tabular}{cc|rr|rr|rr}
\toprule
 & & \multicolumn{2}{c|}{SBMM} & \multicolumn{2}{c|}{Non-Greedy} & \multicolumn{2}{c}{SBMM + Non-Greedy} \\
$p$ & $(d, n, m)$ & Obj & Iter & Obj & Iter & Obj & Iter \\
\midrule
\multirow{4}{*}{1.1}
 & $(5,\; 10,\; 3)$     & $\mathbf{-28.79}$    & 20000 & $-28.21$    & 27  & $-28.78$    & 16  \\
 & $(5,\; 50,\; 3)$     & $-136.88$   & 20000 & $-136.74$   & 37  & $\mathbf{-137.12}$   & 33  \\
 & $(5,\; 100,\; 3)$    & $-287.59$   & 20000 & $-288.57$   & 30  & $\mathbf{-288.78}$   & 43  \\
 & $(10,\; 100,\; 5)$   & $\mathbf{-525.61}$   & 20000 & $-523.28$   & 45  & $-524.51$   & 45  \\
\midrule
\multirow{4}{*}{1.2}
 & $(5,\; 10,\; 3)$     & $\mathbf{-27.09}$    & 20000 & $-26.85$    & 31  & $-27.02$    & 22  \\
 & $(5,\; 50,\; 3)$     & $-129.09$   & 20000 & $\mathbf{-129.72}$   & 67  & $-129.62$   & 56  \\
 & $(5,\; 100,\; 3)$    & $-270.07$   & 20000 & $-272.66$   & 57  & $\mathbf{-275.16}$   & 63  \\
 & $(10,\; 100,\; 5)$   & $-499.10$   & 20000 & $-500.59$   & 84  & $\mathbf{-500.94}$   & 59  \\
\midrule
\multirow{4}{*}{1.3}
 & $(5,\; 10,\; 3)$     & $\mathbf{-25.75}$    & 20000 & $-25.55$    & 49  & $-25.66$    & 35  \\
 & $(5,\; 50,\; 3)$     & $-123.84$   & 20000 & $\mathbf{-124.60}$   & 75  & $-124.36$   & 83  \\
 & $(5,\; 100,\; 3)$    & $-259.32$   & 20000 & $-262.30$   & 93  & $\mathbf{-263.06}$   & 100 \\
 & $(10,\; 100,\; 5)$   & $-478.22$   & 20000 & $-480.87$   & 123 & $\mathbf{-481.96}$   & 130 \\
\bottomrule
\end{tabular}
\end{table}

The results show that for the smallest setting $(d,n,m)=(5,10,3)$, SBMM attains a lower objective value than the Non-Greedy baseline for all three tested values of $p$. Across all 12 settings, the hybrid method attains the lowest objective value in six cases, SBMM in four cases, and the Non-Greedy method in two cases. In particular, for $p=1.2$ and $(d,n,m)=(5,100,3)$, the hybrid 
method reaches $-275.16$, compared with $-272.66$ for the Non-Greedy method. These results indicate that the SBMM phase can provide a useful warm start in several instances.

\section{Conclusion}

This paper studied Bregman proximal methods for composite optimization problems where both the loss term and the regularizer may be nonsmooth and nonconvex. Under relative weak convexity, we proved Bregman stationarity guarantees for three settings: a deterministic proximal subgradient method, a stochastic model-based method with exact Bregman distance, and a minibatch method where the Bregman distance itself is sampled. In each case, the analysis gives an $\mathcal O(\varepsilon^{-4})$ iteration complexity for finding an $\varepsilon$-stationary point under the corresponding subgradient, model accuracy, and stochastic geometry assumptions. The numerical experiments on compressed sensing and robust $L_p$ PCA illustrate that Bregman geometry can be useful for structured nonconvex problems, although the empirical gains depend on the problem regime and on how the proximal subproblems are solved.

\newpage
\bibliographystyle{abbrvnat} 
\bibliography{reference}

\appendix

\section{Properties of Bregman distance and Bregman Proximal Gradient Methods}
By the definition of Bregman distance, we have the following lower bound for Bregman distance.
\begin{lemma} \label{lem:bregman_strongly_convex_lower_bound}
	Let $\omega:\R^d\to\R$ be a continuously differentiable and $1$-strongly convex function. For any $x,y\in\R^d$, the Bregman distance associated with $\omega$ satisfies
	\begin{align*}
		D_\omega(x,y) \geq \dfrac{1}{2}\|x-y\|^2.
	\end{align*}
\end{lemma}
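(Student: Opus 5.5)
The plan is to reduce the claim to the strong convexity of $\omega$. Fix $x,y\in\R^d$. Since $\omega$ is $1$-strongly convex, the function $h(z):=\omega(z)-\frac12\|z\|^2$ is convex, and it is continuously differentiable because $\omega$ is, with $\nabla h(z)=\nabla\omega(z)-z$. I will then apply the first-order characterization of convexity for the differentiable convex function $h$ at the base point $y$:
\begin{align*}
h(x)\geq h(y)+\langle \nabla h(y),x-y\rangle .
\end{align*}

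The next step substitutes the definitions of $h$ and $\nabla h$ and rearranges. After moving the $\omega$-terms to one side, this gives
\begin{align*}
\omega(x)-\omega(y)-\langle\nabla\omega(y),x-y\rangle\geq \tfrac12\|x\|^2-\tfrac12\|y\|^2-\langle y,x-y\rangle.
\end{align*}
The left-hand side is exactly $D_\omega(x,y)$. The right-hand side expands as $\frac12\|x\|^2-\frac12\|y\|^2-\langle y,x\rangle+\|y\|^2=\frac12\|x-y\|^2$, which is the Bregman distance of the Euclidean function $\frac12\|\cdot\|^2$. Combining the two identifies the inequality as $D_\omega(x,y)\geq\frac12\|x-y\|^2$, which is the claim.

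There is no real obstacle. The only point needing care is the gradient inequality for the differentiable convex function $h$, which is standard. Alternatively, one can derive it from convexity along the segment: the ratio $\frac{h(y+s(x-y))-h(y)}{s}$ is nondecreasing in $s$, and letting $s\downarrow0$ gives $h(x)-h(y)\geq\langle\nabla h(y),x-y\rangle$. Since the paper's notation fixes $\|\cdot\|$ as the Euclidean norm, no dual-norm issues arise.
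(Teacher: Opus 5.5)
Your proof is correct. It is the argument the paper has in mind: the paper gives no separate proof and only says the bound follows from the definition of $D_\omega$. Your step of applying the gradient inequality to $\omega-\frac12\|\cdot\|^2$, which is convex by the paper's own definition of $1$-strong convexity, and then rearranging is precisely the justification behind that remark.
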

We also have the following three points identity that will be used in the convergence analysis:
\begin{lemma}[Three points identity] \cite[Lemma 3.1]{chen1993convergence}\label{lem:bregman_three_points_identity}
 	Let $\omega:\R^d\to\R$ be a continuously differentiable and $1$-strongly convex function. For any $x,y,z\in\R^d$, we have
	\begin{align}\label{eq:bregman_three_points_identity}
		[\nabla \omega(y) - \nabla \omega(z)]^\top (x-z) = D_\omega(x,z) + D_\omega(z, y) - D_\omega(x, y)
	\end{align}
\end{lemma}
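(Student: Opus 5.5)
The identity follows by direct expansion from the definition of the Bregman distance, so the plan is to write out each of the three terms on the right-hand side of \eqref{eq:bregman_three_points_identity} and let the function values cancel. Only differentiability of $\omega$ is used; strong convexity plays no role.

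Using $D_\omega(u,v)=\omega(u)-\omega(v)-\langle\nabla\omega(v),u-v\rangle$, I will write
\begin{align*}
D_\omega(x,z)&=\omega(x)-\omega(z)-\langle\nabla\omega(z),x-z\rangle,\\
D_\omega(z,y)&=\omega(z)-\omega(y)-\langle\nabla\omega(y),z-y\rangle,\\
D_\omega(x,y)&=\omega(x)-\omega(y)-\langle\nabla\omega(y),x-y\rangle.
\end{align*}
Forming $D_\omega(x,z)+D_\omega(z,y)-D_\omega(x,y)$, the values $\omega(x)$, $\omega(y)$ and $\omega(z)$ each appear once with a plus sign and once with a minus sign, so they cancel.

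What remains is the inner-product part:
\begin{align*}
-\langle\nabla\omega(z),x-z\rangle-\langle\nabla\omega(y),z-y\rangle+\langle\nabla\omega(y),x-y\rangle.
\end{align*}
The last two terms combine linearly into $\langle\nabla\omega(y),(x-y)-(z-y)\rangle=\langle\nabla\omega(y),x-z\rangle$. Adding the first term gives $\langle\nabla\omega(y)-\nabla\omega(z),x-z\rangle$, which is the left-hand side of \eqref{eq:bregman_three_points_identity}.

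This completes the argument. The only place where care is needed is the sign bookkeeping in the linear terms, so I will check the combination $(x-y)-(z-y)=x-z$ explicitly.
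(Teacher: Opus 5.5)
Your proof is correct: the function values cancel and the linear terms combine to $\langle\nabla\omega(y)-\nabla\omega(z),x-z\rangle$. As you note, only differentiability of $\omega$ is used. The paper gives no proof of its own and simply cites Lemma 3.1 of \citet{chen1993convergence}, where the identity is obtained by exactly this direct expansion.
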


\begin{lemma}\label{lem:rwc-fb}
    Let $\varphi:\rr^d\to \rr$ be a $\rho\omega$-relatively weakly convex function and  $\lambda\in (0, \frac1\rho)$ be a real number.
	For any $x,z\in\rr^d$, it holds that
    \begin{align*}
        \varphi(z) - \varphi(\xhat)  \geq \frac1\lambda [D_\omega(\xhat,x) - D_\omega(z,x)],
    \end{align*}
	with $\xhat\triangleq \bproxvp(x)$.
\end{lemma}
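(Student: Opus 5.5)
The statement to prove is \Cref{lem:rwc-fb}. The plan is to use the optimality of $\xhat=\bproxvp(x)$ in the strongly convex proximal subproblem. Set
\[
\Phi(y):=\varphi(y)+\tfrac1\lambda D_\omega(y,x).
\]
Since $D_\omega(y,x)=\omega(y)-\omega(x)-\langle\nabla\omega(x),y-x\rangle$, we can write $\Phi=(\varphi+\rho\omega)+(\tfrac1\lambda-\rho)\omega+(\text{affine in }y)$. Here $\varphi+\rho\omega$ is convex by \Cref{def:rwc}, and $\lambda<1/\rho$. Hence $\Phi-(\tfrac1\lambda-\rho)\omega$ is convex, $\Phi$ is strongly convex, and $\xhat$ is its unique minimizer.

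The required inequality is equivalent to $\Phi(z)\ge\Phi(\xhat)$ for all $z$, i.e.
\[
\varphi(z)+\tfrac1\lambda D_\omega(z,x)\ \ge\ \varphi(\xhat)+\tfrac1\lambda D_\omega(\xhat,x).
\]
This is just the definition of $\xhat$ as a minimizer, so it holds immediately. Rearranging gives
\[
\varphi(z)-\varphi(\xhat)\ge\tfrac1\lambda\bigl[D_\omega(\xhat,x)-D_\omega(z,x)\bigr].
\]

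For completeness, I would add the sharper version that is presumably used later. Since $\Phi-(\tfrac1\lambda-\rho)\omega$ is convex and $0\in\partial\Phi(\xhat)$, we get
\[
\Phi(z)\ge\Phi(\xhat)+\left(\tfrac1\lambda-\rho\right)D_\omega(z,\xhat).
\]
This strengthens the bound by the nonnegative term $(\tfrac1\lambda-\rho)D_\omega(z,\xhat)$.

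The only genuinely delicate point is existence of a minimizer. Lower semicontinuity of $\varphi$ gives lower semicontinuity of $\Phi$. Coercivity comes from the strong convexity just shown: a proper lsc strongly convex function has an affine minorant plus a quadratic term, so it is coercive and attains its minimum. I would state this briefly and cite it, rather than treat it as a real obstacle.
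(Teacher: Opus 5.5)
Your proof is correct and is the same as the paper's: the inequality is just the minimality of $\hat x=\bproxvp(x)$ for $\varphi(\cdot)+\frac1\lambda D_\omega(\cdot,x)$, evaluated against the candidate point $z$, and then rearranged. Your additional remarks on strong convexity, existence of the minimizer, and the sharpened bound with the extra term $(\frac1\lambda-\rho)D_\omega(z,\hat x)$ are correct but not needed here; the sharpened bound is essentially the paper's \Cref{lem:bregman-rcvx-sym}.
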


\begin{proof}
    By the definition of $\bproxvp(\cdot)$, we have
    \begin{align*}
        \varphi(z)+ \frac1\lambda D_\omega(z, x) & \geq \min_y [\varphi(y) + \frac1\lambda D_\omega(y,x)] = \varphi(\bproxvp(x))+ \frac1\lambda D_\omega(\bproxvp(x),x).
    \end{align*}
    The desired inequality follows.
\end{proof}

\begin{lemma}\label{lem:rwc-breg-me}
    Let $\varphi:\rr^d\to \rr$ be a $\rho\omega$-relatively weakly convex function and the real number $\lambda\in (0, \dfrac1\rho)$. For any $x, z\in\rr^d$, it holds that
    \begin{align*}
        \benvvp(z) - \benvvp(x) \leq \frac1\lambda [D_\omega(\xhat,z) - D_\omega(\xhat,x)],
    \end{align*} 
	with $\xhat\triangleq \bproxvp(x)$.
\end{lemma}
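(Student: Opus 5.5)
The plan is to compare the envelope values at $z$ and at $x$ through the common proximal point $\xhat=\bproxvp(x)$, and to cancel the resulting terms with the three points identity. Since $\lambda\in(0,\frac1\rho)$ and $\varphi$ is $\rho\omega$-relatively weakly convex, the function $y\mapsto\varphi(y)+\frac1\lambda D_\omega(y,x)$ is strongly convex up to the term $(\frac1\lambda-\rho)\omega$. The proximal point $\xhat$ is therefore well defined and attains the minimum, which gives
\begin{align*}
\benvvp(x)=\varphi(\xhat)+\frac1\lambda D_\omega(\xhat,x).
\end{align*}

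For the envelope at $z$, I would use $\xhat$ as a feasible (generally suboptimal) point in the minimization defining $\benvvp(z)$:
\begin{align*}
\benvvp(z)=\min_{y}\Big\{\varphi(y)+\frac1\lambda D_\omega(y,z)\Big\}\leq \varphi(\xhat)+\frac1\lambda D_\omega(\xhat,z).
\end{align*}
Subtracting the identity for $\benvvp(x)$, the $\varphi(\xhat)$ terms cancel. What remains is
\begin{align*}
\benvvp(z)-\benvvp(x)\leq\frac1\lambda\big[D_\omega(\xhat,z)-D_\omega(\xhat,x)\big],
\end{align*}
which is exactly the claim.

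The argument uses nothing beyond the definitions in \eqref{eq:def_of_me}. Relative weak convexity enters only to guarantee that $\xhat$ exists and realizes the minimum, so that $\benvvp(x)$ can be written exactly through $\xhat$. The one point needing care is this attainment. Strong convexity of the proximal objective together with lower semicontinuity of $\varphi$ gives coercivity, hence existence of a minimizer, and strict convexity gives uniqueness. Everything else is a one-line comparison, so I do not expect a real obstacle.
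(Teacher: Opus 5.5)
Your proof is correct and matches the paper's argument: bound $\benvvp(z)$ by evaluating its defining objective at the feasible point $\xhat$, then use $\benvvp(x)=\varphi(\xhat)+\frac1\lambda D_\omega(\xhat,x)$ so that the $\varphi(\xhat)$ terms cancel. One small correction: the three points identity mentioned in your opening sentence is never used; only the direct cancellation of $\varphi(\xhat)$ is needed.
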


\begin{proof}
    By the definitions of $\benvvp$ and $\bproxvp$, we have
    \begin{align*}
        \benvvp(z) 
        &= \min_y [\varphi(y) + \frac1\lambda D_\omega(y,z)]\leq \varphi(\bproxvp(x)) + \frac1\lambda D_\omega(\bproxvp(x),z) \\
        &= \benvvp(x) - \frac1\lambda D_\omega(\bproxvp(x),x) + \frac1\lambda D_\omega(\bproxvp(x),z),
    \end{align*}
    which is the desired inequality. 
\end{proof}

\section{Proof of Results in Section \ref{sec:BPG}}\label{sec:pf_2}

We start with the following well-known three point inequality in Lemma \ref{lem:bregman-cvx-triple} of Bregman Moreau Envelope.
\begin{lemma}\cite[Lemma 3.2]{chen1993convergence}\label{lem:bregman-cvx-triple}
	Let $\varphi:\rr^d\to \rr$ be a convex function, $\omega:\rr^d\to\rr$ be a convex and differentiable DGF, and $\lambda>0$. For any $x,z\in\rr^d$, it holds that
	\begin{align*}
		\varphi(z) + \frac1\lambda D_\omega(z,x) \geq \varphi(\xhat) + \frac1\lambda D_\omega(\xhat,x)+ \frac1\lambda D_\omega(z,\xhat),
	\end{align*}
	with $\xhat:= \bproxvp(x)$.
\end{lemma}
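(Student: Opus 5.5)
The plan is to combine the first-order optimality condition of the Bregman proximal subproblem with the three points identity in \Cref{lem:bregman_three_points_identity}. Throughout, I take $\xhat=\bproxvp(x)$ to be well defined, as the statement presupposes. In the setting of the paper this is automatic: $\omega$ is $1$-strongly convex, so $y\mapsto\varphi(y)+\frac1\lambda D_\omega(y,x)$ is strongly convex and has a unique minimizer.

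\emph{Step 1 (optimality condition).} The function $\varphi$ is convex and finite on all of $\R^d$, hence continuous. The function $y\mapsto\frac1\lambda D_\omega(y,x)$ is convex and differentiable with gradient $\frac1\lambda(\nabla\omega(y)-\nabla\omega(x))$. The convex subdifferential sum rule therefore applies without any qualification issue, and Fermat's rule at the minimizer $\xhat$ gives
\begin{align*}
g:=\frac1\lambda\bigl(\nabla\omega(x)-\nabla\omega(\xhat)\bigr)\in\partial\varphi(\xhat).
\end{align*}

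\emph{Step 2 (subgradient inequality).} By convexity of $\varphi$, for every $z\in\R^d$,
\begin{align*}
\varphi(z)\geq\varphi(\xhat)+\frac1\lambda\bigl\langle\nabla\omega(x)-\nabla\omega(\xhat),z-\xhat\bigr\rangle.
\end{align*}

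\emph{Step 3 (three points identity).} Apply \eqref{eq:bregman_three_points_identity} with the substitution $(x,y,z)\mapsto(z,x,\xhat)$. This gives
\begin{align*}
\bigl\langle\nabla\omega(x)-\nabla\omega(\xhat),z-\xhat\bigr\rangle=D_\omega(z,\xhat)+D_\omega(\xhat,x)-D_\omega(z,x).
\end{align*}
Substituting this into Step 2 and moving $\frac1\lambda D_\omega(z,x)$ to the left-hand side yields exactly
\begin{align*}
\varphi(z)+\frac1\lambda D_\omega(z,x)\geq\varphi(\xhat)+\frac1\lambda D_\omega(\xhat,x)+\frac1\lambda D_\omega(z,\xhat).
\end{align*}

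The only delicate point is Step 1, namely justifying the subdifferential sum rule and the existence of the minimizer. Both follow from the finiteness of $\varphi$ on $\R^d$, the differentiability of $\omega$, and the strong convexity of $\omega$. The rest is direct algebra.
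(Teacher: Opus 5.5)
Your proof is correct: Fermat's rule with the convex sum rule gives $\frac1\lambda(\nabla\omega(x)-\nabla\omega(\xhat))\in\partial\varphi(\xhat)$, and combining the subgradient inequality with the three points identity (your substitution $(x,y,z)\mapsto(z,x,\xhat)$ checks out) yields exactly the claimed inequality. The paper gives no proof of this lemma and only cites \cite{chen1993convergence}. Its proof of the relatively weakly convex extension, \Cref{lem:bregman-rcvx-sym}, uses the same optimality-condition-plus-rearrangement argument, which reduces to yours when $\rho=0$.
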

From Lemma \ref{lem:bregman-cvx-triple}, we have a straightforward extension from convex to relatively weakly convex function, and we present it in the following Lemma \ref{lem:bregman-rcvx-sym}.
\begin{lemma}
\label{lem:bregman-rcvx-sym}
Let $\varphi:\R^d\to\R\cup\{+\infty\}$ be proper, lower
semicontinuous, and $\rho\omega$-relatively weakly convex. Let
$\lambda\in(0,1/\rho)$, then for every $z\in\R^d$,
\begin{align}
\label{eq:bregman-prox-growth}
    \varphi(z)+\frac{1}{\lambda}D_\omega(z,x)
    \geq
    \varphi(\hat x)+\frac{1}{\lambda}D_\omega(\hat x,x)
    +\left(\frac{1}{\lambda}-\rho\right)
    D_\omega(z,\hat x).
\end{align}
In particular,
\begin{align}
\label{eq:bregman-prox-gap-stationarity}
\varphi(x)-\varphi(\hat x)-\rho D_\omega(\hat x,x)
\geq
\left(\frac{1}{\lambda}-\rho\right)
\left(
D_\omega(\hat x,x)+D_\omega(x,\hat x)
\right) = \lambda(1-\lambda\rho)
    \dsymomelam(\hat x,x).
\end{align}
\end{lemma}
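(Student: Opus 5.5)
The plan is to reduce \eqref{eq:bregman-prox-growth} to the convex three point inequality of Lemma \ref{lem:bregman-cvx-triple}, applied to a shifted convex function with a rescaled distance generating function. Set $\psi:=\varphi+\rho\omega$, which is convex by Definition \ref{def:rwc}, and set $\mu:=\frac1\lambda-\rho>0$. The key algebraic observation is that the prox objective can be rewritten as
\begin{align*}
\varphi(y)+\frac1\lambda D_\omega(y,x)=\psi(y)-\rho\omega(y)+\frac1\lambda D_\omega(y,x)
=\psi(y)+\mu\,\omega(y)-\frac1\lambda\omega(x)-\frac1\lambda\langle\nabla\omega(x),y-x\rangle .
\end{align*}
That is, up to terms constant in $y$ plus a linear term in $y$, it equals $\psi(y)+\mu\,\omega(y)$. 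Absorbing the linear term into $\psi$ keeps convexity. So the problem becomes a convex function plus $\mu\omega$, and $\hat x$ is its unique minimizer; uniqueness holds by strong convexity of $\mu\omega$.

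Rather than forcing the exact form of Lemma \ref{lem:bregman-cvx-triple}, I would argue directly, which is cleaner. Define $\Phi(y):=\varphi(y)+\frac1\lambda D_\omega(y,x)$. By the display above, $\Phi=h+\mu\omega$, where $h(y):=\psi(y)-\frac1\lambda\langle\nabla\omega(x),y\rangle+\text{const}$ is convex.

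Optimality of $\hat x$ gives $0\in\partial h(\hat x)+\mu\nabla\omega(\hat x)$. This uses the sum rule, valid because $\omega$ is differentiable and $h$ is proper, lsc and convex. So there is $g\in\partial h(\hat x)$ with $g=-\mu\nabla\omega(\hat x)$. Convexity of $h$ then gives, for every $z$,
\begin{align*}
\Phi(z)-\Phi(\hat x)\ \ge\ \langle g,z-\hat x\rangle+\mu\bigl(\omega(z)-\omega(\hat x)\bigr)=\mu\bigl(\omega(z)-\omega(\hat x)-\langle\nabla\omega(\hat x),z-\hat x\rangle\bigr)=\mu D_\omega(z,\hat x).
\end{align*}
This is exactly \eqref{eq:bregman-prox-growth}. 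The step needing the most care is justifying existence of the minimizer and the subdifferential sum rule for a proper lsc convex $h$ plus a differentiable strongly convex term. Existence follows from strong convexity together with lower semicontinuity, which gives coercivity of $\Phi$.

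For \eqref{eq:bregman-prox-gap-stationarity}, take $z=x$ in \eqref{eq:bregman-prox-growth}. Using $D_\omega(x,x)=0$, this yields
\begin{align*}
\varphi(x)-\varphi(\hat x)\ \ge\ \frac1\lambda D_\omega(\hat x,x)+\mu D_\omega(x,\hat x).
\end{align*}
Subtract $\rho D_\omega(\hat x,x)$ from both sides. The right side becomes $\mu\bigl(D_\omega(\hat x,x)+D_\omega(x,\hat x)\bigr)$. By \eqref{eq:def_of_bsm}, this equals $\mu\lambda^2\dsymomelam(\hat x,x)=\lambda(1-\lambda\rho)\dsymomelam(\hat x,x)$, which completes the proof.
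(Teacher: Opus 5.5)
Your proof is correct and is essentially the paper's argument. The paper also extracts $s\in\partial(\varphi+\rho\omega)(\hat x)$ with $s+(\frac1\lambda-\rho)\nabla\omega(\hat x)-\frac1\lambda\nabla\omega(x)=0$ from the optimality condition and applies the subgradient inequality of $\varphi+\rho\omega$; this is your step with $g=s-\frac1\lambda\nabla\omega(x)$ once the linear term is absorbed into $h$, followed by the same specialization $z=x$ (your remarks on existence of $\hat x$ and the sum rule just fill in details the paper takes for granted).
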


\begin{proof}
Since $\varphi+\rho\omega$ is convex and $\hat x$ minimizes
$\varphi(\cdot)+\lambda^{-1}D_\omega(\cdot,x)$, there exists
$s\in\partial(\varphi+\rho\omega)(\hat x)$ such that
\begin{align}
    s
    +\left(\frac{1}{\lambda}-\rho\right)\nabla\omega(\hat x)
    -\frac{1}{\lambda}\nabla\omega(x)
    =0.
\end{align}
By the convexity of $\varphi+\rho\omega$,
\begin{align}
    \varphi(z)+\rho\omega(z)
    \geq
    \varphi(\hat x)+\rho\omega(\hat x)
    +\langle s,z-\hat x\rangle.
\end{align}
Substituting the expression for $s$ and rearranging gives
\begin{align}
    \varphi(z)+\frac{1}{\lambda}D_\omega(z,x)
    \geq
    \varphi(\hat x)+\frac{1}{\lambda}D_\omega(\hat x,x)
    +\left(\frac{1}{\lambda}-\rho\right)
    D_\omega(z,\hat x),
\end{align}
which proves \eqref{eq:bregman-prox-growth}. Setting $z=x$ and using the definition of $\dsymomelam$ yields
\eqref{eq:bregman-prox-gap-stationarity}. 
\end{proof}

Then we present the following technical lemma for analyzing the one-step descent progress.
\begin{lemma}\label{lem:comp-lem-main}
	Let  $\lambda\in (0, \dfrac1{\rho_F})$ and the step size $t_k\in(0, \lambda]$. Then the iterates $\{x^k\}_{k\geq 0}$ generated by 
    \Cref{alg:comp_bpg} satisfy that
	\begin{align}
			& \frac{1}{t_k}\left(D_\omega(\xhk,\xkp) + D_\omega(\xkp, \xk) - D_\omega(\xhk,\xk)\right) \nonumber\\
			\leq &\  F(\xhk) -  F(\xk) 
			+ \la g_f(\xk), \xk - \xkp\ra + r(\xk)-r(\xkp) \nonumber\\
			& + \rho_r D_\omega(\xhk, \xkp) + \rho_f D_\omega(\xhk, \xk)\label{eq:comp-lem1-way1-ineq-1}
	\end{align}
	and
	\begin{align}
			& \frac{1}{t_k}\left(D_\omega(\xhk,\xkp) + D_\omega(\xkp, \xk) - D_\omega(\xhk,\xk)\right) \nonumber\\
			\leq &\  F(\xhk) -  F(\xkp) 
			+ \la g_f(\xk), \xk - \xkp\ra + f(\xkp)-f(\xk) \nonumber\\
			& + \rho_r D_\omega(\xhk, \xkp) + \rho_f D_\omega(\xhk, \xk)\label{eq:comp-lem1-way2-ineq-1}
	\end{align}
	with $g_f(\xk)\in\partial f(\xk)$.
\end{lemma}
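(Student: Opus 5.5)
The plan is to combine the optimality of $\xkp$ in the subproblem \eqref{eq:comp-alg-update}, tested at the point $z=\xhk$, with the relative weak convexity of $f$ at $\xk$. Both inequalities \eqref{eq:comp-lem1-way1-ineq-1} and \eqref{eq:comp-lem1-way2-ineq-1} will come from a single intermediate bound, regrouped in two different ways.

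\emph{Step 1 (subproblem growth).} Define $\varphi_k(x):=\la g_f(\xk),x-\xk\ra+r(x)$. Adding a linear function does not change relative weak convexity, so $\varphi_k$ is $\rho_r\omega$-relatively weakly convex. By \eqref{eq:comp-alg-update}, $\xkp=\prox^\omega_{t_k,\varphi_k}(\xk)$. The step sizes satisfy $t_k\le\lambda<1/\rho_F\le 1/\rho_r$, since $\rho_F=\rho_f+\rho_r$ with $\rho_f>0$. Hence \Cref{lem:bregman-rcvx-sym} applies with $\lambda$ replaced by $t_k$, $\rho=\rho_r$, $x=\xk$ and $z=\xhk$. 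Cancelling the $\la g_f(\xk),\cdot-\xk\ra$ terms and moving the Bregman terms to the left gives
\begin{align*}
\frac1{t_k}\left(D_\omega(\xhk,\xkp)+D_\omega(\xkp,\xk)-D_\omega(\xhk,\xk)\right)\le \la g_f(\xk),\xhk-\xkp\ra+r(\xhk)-r(\xkp)+\rho_r D_\omega(\xhk,\xkp).
\end{align*}

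\emph{Step 2 (linearization of $f$).} Split the inner product as $\la g_f(\xk),\xhk-\xk\ra+\la g_f(\xk),\xk-\xkp\ra$. Definition~\ref{def:rwc}, applied to $f$ with $z=\xk$ and $x=\xhk$, gives
\begin{align*}
\la g_f(\xk),\xhk-\xk\ra\le f(\xhk)-f(\xk)+\rho_f D_\omega(\xhk,\xk).
\end{align*}
Substituting this into Step 1, the right-hand side becomes
\begin{align*}
F(\xhk)-f(\xk)-r(\xkp)+\la g_f(\xk),\xk-\xkp\ra+\rho_rD_\omega(\xhk,\xkp)+\rho_fD_\omega(\xhk,\xk).
\end{align*}

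\emph{Step 3 (two regroupings).} To obtain \eqref{eq:comp-lem1-way1-ineq-1}, write $-f(\xk)-r(\xkp)=-F(\xk)+r(\xk)-r(\xkp)$. To obtain \eqref{eq:comp-lem1-way2-ineq-1}, write instead $-f(\xk)-r(\xkp)=-F(\xkp)+f(\xkp)-f(\xk)$.

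The argument is essentially bookkeeping. The only point needing care is Step 1: one must check that \Cref{lem:bregman-rcvx-sym} is legitimately applicable to the linearly perturbed regularizer with parameter $t_k$, that is, that $t_k<1/\rho_r$. This is guaranteed by the standing assumptions $t_k\le\lambda<1/\rho_F$.
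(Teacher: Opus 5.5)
Your proof is correct and follows essentially the same route as the paper. The paper writes out the first-order optimality condition of the subproblem, applies the $\rho_r\omega$-subgradient inequality for $r$ and the three-point identity to obtain exactly your Step 1 inequality at $x=\xhk$, and then linearizes $f$ and regroups in the same two ways. Your packaging of Step 1 as \Cref{lem:bregman-rcvx-sym} applied to the linearly perturbed regularizer, with parameter $t_k<1/\rho_r$, is a valid shortcut, since that lemma's proof is precisely this optimality-condition-plus-three-point computation.
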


\begin{proof}
	From the optimality condition of \eqref{eq:comp-alg-update}, we have
	\begin{align}
			\frac{1}{t_k}(\nabla \omega(\xk) - \nabla \omega(\xkp)) - g_f(\xk) \in \partial r(\xkp),
	\end{align}
	with $g_f(\xk)\in\partial f(\xk)$.
	From the $\rho_r\omega$-RWC of $r$ , we obtain that for any $x\in\rr^d$,
	\begin{align*}
			r(x) \geq r(\xkp) + \la \frac{1}{t_k}(\nabla \omega(\xk) - \nabla \omega(\xkp)) - g_f(\xk), x - \xkp\ra - \rho_r D_\omega(x, \xkp).
	\end{align*}
	Rearranging the terms yields that
	\begin{align}\label{eq:comp-lem1-ineq-1}
		\begin{split}
			& r(x) - r(\xkp) + \rho_r D_\omega(x, \xkp) + \la g_f(\xk), x - \xkp\ra \\
			\geq &\ \frac{1}{t_k}\la \nabla \omega(\xk) - \nabla \omega(\xkp), x - \xkp\ra \\
			= &\  \frac{1}{t_k}\(D_\omega(x,\xkp) + D_\omega(\xkp, \xk) - D_\omega(x,\xk)\).
		\end{split}
	\end{align}
        where the equality comes from the three points identity \eqref{eq:bregman_three_points_identity}. Since $f$ is $\rho_f\omega$-RWC, we similarly have
	\begin{align}\label{eq:comp-lem1-frwc}
		f(x)- f(\xk) + \rho_f D_\omega(x, \xk) \geq \la g_f(\xk), x - \xk\ra.
	\end{align}
	So combine \eqref{eq:comp-lem1-ineq-1} and \eqref{eq:comp-lem1-frwc}, and we obtain 
	\begin{align*}
        &f(x)- f(\xk) + r(x) - r(\xkp) + \rho_f D_\omega(x, \xk) + \rho_r D_\omega(x, \xkp) + \la g_f(\xk), \xk - \xkp\ra \\
        \ge\ & \frac{1}{t_k}\(D_\omega(x,\xkp) + D_\omega(\xkp, \xk) - D_\omega(x,\xk)\)
	\end{align*}
	Let $\lambda\in (0, \dfrac1{\rho_F})$, and  $x=\xhk\triangleq \prox_{\lambda,F}^\omega(\xk)$. Then rearrange the terms in two different ways and we obtain that
	\begin{align*}
		\begin{split}
			& \frac{1}{t_k}\(D_\omega(\xhk,\xkp) + D_\omega(\xkp, \xk) - D_\omega(\xhk,\xk)\) \\
			\leq &\  F(\xhk) -  F(\xk) 
			+ \la g_f(\xk), \xk - \xkp\ra + r(\xk)-r(\xkp) \\
			& + \rho_r D_\omega(\xhk, \xkp) + \rho_f D_\omega(\xhk, \xk),
		\end{split}
	\end{align*}
	and
	\begin{align*}
		\begin{split}
			& \frac{1}{t_k}\(D_\omega(\xhk,\xkp) + D_\omega(\xkp, \xk) - D_\omega(\xhk,\xk)\) \\
			\leq &\  F(\xhk) -  F(\xkp) 
			+ \la g_f(\xk), \xk - \xkp\ra + f(\xkp)-f(\xk) \\
			& + \rho_r D_\omega(\xhk, \xkp) + \rho_f D_\omega(\xhk, \xk).
		\end{split}
	\end{align*}
\end{proof}
\subsection{Proof of \Cref{lem:comp-lem-rec-gf}}\label{sec:pf_of_2lem_1}

\begin{proof}
	From the optimality condition of \eqref{eq:comp-alg-update} and the $\rho_r\omega$-RWC of $r$, we have
	\begin{align*}
		&(1-t_k\rho_r)D_\omega(\xhk,\xkp)
		+D_\omega(\xkp,\xk)
		-D_\omega(\xhk,\xk)\\
		\leq&\
		t_k\left[
		\la g_k,\xhk-\xkp\ra
		+r(\xhk)-r(\xkp)
		\right].
	\end{align*}
	Since $f$ is $\rho_f\omega$-RWC, we have
	\begin{align*}
		\la g_k,\xhk-\xk\ra
		\leq
		f(\xhk)-f(\xk)
		+\rho_fD_\omega(\xhk,\xk).
	\end{align*}
	Combining the above two inequalities, using $\rho_F=\rho_f+\rho_r$, and rearranging the terms, we obtain
	\begin{align*}
		&(1-t_k\rho_r)
		\left(
		D_\omega(\xhk,\xkp)
		-D_\omega(\xhk,\xk)
		\right)
		+D_\omega(\xkp,\xk)\\
		\leq&\
		t_k\left[
		F(\xk)-F(\xkp)
		-\left(
		F(\xk)-F(\xhk)
		-\rho_FD_\omega(\xhk,\xk)
		\right)
		\right]\\
		&+
		t_k\left[
		\la g_k,\xk-\xkp\ra
		+f(\xkp)-f(\xk)
		\right].
	\end{align*}
	Since $\xhk$ is a feasible point in the definition of $\benvF(\xkp)$, we have
	\begin{align*}
		\benvF(\xkp)-\benvF(\xk)
		\leq
		\frac{1}{\lambda}
		\left(
		D_\omega(\xhk,\xkp)
		-D_\omega(\xhk,\xk)
		\right).
	\end{align*}
	Combining the above two inequalities yields
	\begin{align*}
		&\frac{t_k}{\lambda(1-t_k\rho_r)}
		\left(
		F(\xk)-F(\xhk)
		-\rho_FD_\omega(\xhk,\xk)
		\right)\\
		\leq&\
		\benvF(\xk)-\benvF(\xkp)
		+
		\frac{t_k}{\lambda(1-t_k\rho_r)}
		\left(
		F(\xk)-F(\xkp)
		\right)\\
		&+
		\frac{1}{\lambda(1-t_k\rho_r)}
		\left[
		t_k\left(
		\la g_k,\xk-\xkp\ra
		+f(\xkp)-f(\xk)
		\right)
		-D_\omega(\xkp,\xk)
		\right].
	\end{align*}
	Since $f$ is $L_f$-Lipschitz and $\|g_k\|\leq L_f$, it follows from the $1$-strong convexity of $\omega$ that
	\begin{align*}
		&t_k\left(
		\la g_k,\xk-\xkp\ra
		+f(\xkp)-f(\xk)
		\right)
		-D_\omega(\xkp,\xk)\\
		\leq&\
		2t_kL_f\|\xkp-\xk\|
		-\frac{1}{2}\|\xkp-\xk\|^2
		\leq2t_k^2L_f^2.
	\end{align*}
	Finally, applying \eqref{eq:bregman-prox-gap-stationarity} to $F$, we have
	\begin{align*}
		F(\xk)-F(\xhk)
		-\rho_FD_\omega(\xhk,\xk)
		\geq
		\lambda(1-\lambda\rho_F)
		\dsymomelam(\xhk,\xk).
	\end{align*}
	Substituting the above two inequalities into the preceding inequality, we obtain
	\begin{align*}
		\frac{t_k(1-\lambda\rho_F)}
		{1-t_k\rho_r}
		\dsymomelam(\xhk,\xk)
		\leq
		\benvF(\xk)-\benvF(\xkp)+
		\frac{t_k}{\lambda(1-t_k\rho_r)}
		\left(
		F(\xk)-F(\xkp)
		\right)+\frac{2t_k^2L_f^2}
		{\lambda(1-t_k\rho_r)},
	\end{align*}
	which proves \eqref{eq:comp-lem-gf}.
\end{proof}

\subsection{Proof of \Cref{lem:comp-lem-rec-gF}}\label{sec:pf_of_2lem_2}
\begin{proof}
	Denote $\mathcal G_k:=F(\xk)-F(\xhk)-\rho_FD_\omega(\xhk,\xk).$
	Choose $g_r^k\in\partial r(\xk)$ and define $g_F(\xk):=g_k+g_r^k.$ By the $\rho_r\omega$-RWC of $r$, we have
	\begin{align}
		r(\xk)-r(\xkp)
		\leq
		\la g_r^k,\xk-\xkp\ra
		+
		\rho_rD_\omega(\xkp,\xk).
		\label{eq:comp-gF-rwc-r}
	\end{align}
	Combining \eqref{eq:comp-lem1-way1-ineq-1}, with $g_f(\xk)=g_k$, and \eqref{eq:comp-gF-rwc-r}, we obtain
	\begin{align*}
		&\frac{1}{t_k}\left(D_\omega(\xhk,\xkp)+D_\omega(\xkp,\xk)-D_\omega(\xhk,\xk)\right)\\
		\leq&\
		F(\xhk)-F(\xk)+\la g_F(\xk),\xk-\xkp\ra + \rho_rD_\omega(\xkp,\xk) + \rho_rD_\omega(\xhk,\xkp) + \rho_fD_\omega(\xhk,\xk)\\=&-\mathcal G_k+\la g_F(\xk),\xk-\xkp\ra+\rho_r\left(D_\omega(\xhk,\xkp)+D_\omega(\xkp,\xk)-D_\omega(\xhk,\xk)\right).
	\end{align*}
	Therefore, we have
	\begin{align}
		&(1-t_k\rho_r)\left(D_\omega(\xhk,\xkp)+D_\omega(\xkp,\xk)-
		D_\omega(\xhk,\xk)\right)
		\leq t_k\left[-\mathcal G_k+\la g_F(\xk),\xk-\xkp\ra\right].\label{eq:comp-gF-main}
	\end{align}
	Invoking \Cref{lem:rwc-breg-me}, we have
	\begin{align*}
		\benvF(\xkp)-\benvF(\xk)
		\leq\frac{1}{\lambda}\left(D_\omega(\xhk,\xkp)-D_\omega(\xhk,\xk)\right).
	\end{align*}
	It follows from \eqref{eq:comp-gF-main} that
	\begin{align}
		\benvF(\xkp)-\benvF(\xk)\leq-\frac{t_k}{\lambda(1-t_k\rho_r)}\mathcal G_k+\frac{1}{\lambda}\left[\frac{t_k}{1-t_k\rho_r}\la g_F(\xk),\xk-\xkp\ra-D_\omega(\xkp,\xk)\right].
		\label{eq:comp-gF-envelope}
	\end{align}
	By the Cauchy-Schwarz inequality and \Cref{lem:bregman_strongly_convex_lower_bound}, we obtain
	\begin{align}
		&\frac{t_k}{1-t_k\rho_r}
		\la g_F(\xk),\xk-\xkp\ra
		-
		D_\omega(\xkp,\xk)\nonumber\\
		\leq&\
		\frac{t_k}{1-t_k\rho_r}
		\|g_F(\xk)\|
		\|\xk-\xkp\|-\frac{1}{2}\|\xk-\xkp\|^2
		\leq\frac{t_k^2}{2(1-t_k\rho_r)^2}\|g_F(\xk)\|^2.
		\label{eq:comp-gF-young}
	\end{align}
	Finally, applying \eqref{eq:bregman-prox-gap-stationarity} to $F$, we have
	\begin{align}
		\mathcal G_k\geq\left(\frac{1}{\lambda}-\rho_F\right) \left(D_\omega(\xhk,\xk)+D_\omega(\xk,\xhk)\right).\label{eq:comp-gF-gap}
	\end{align}
	Combining \eqref{eq:comp-gF-envelope}, \eqref{eq:comp-gF-young}, and \eqref{eq:comp-gF-gap}, we obtain
	\begin{align*}
		\benvF(\xkp)-\benvF(\xk)
		\leq-\frac{t_k(1-\lambda\rho_F)}
		{(1-t_k\rho_r)\lambda^2}
		\left(D_\omega(\xhk,\xk)+D_\omega(\xk,\xhk)\right)+\frac{t_k^2}
		{2\lambda(1-t_k\rho_r)^2}
		\|g_F(\xk)\|^2,
	\end{align*}
	which proves \eqref{eq:comp-lem-gF}.
\end{proof}

\subsection{Proof of \Cref{2thm:main_cov}}\label{sec:pf_of_2thm_main}
\begin{proof}
We will utilize the one-step descent results of \Cref{lem:comp-lem-rec-gF} and \Cref{lem:comp-lem-rec-gf} to prove the main \Cref{2thm:main_cov}. The entire proof is divided into three parts.
\begin{enumerate}[label = (\arabic*)]

\item Summing up \eqref{eq:comp-lem-gF} with respect to $k$ and applying \ref{2assump:comp-ubF}, we have
\begin{align*}
\dsum_{k=0}^{K-1}\dfrac{1-\lambda \rho_F}{1-t_k\rho_r}t_k \dsymomelam(\xhk,\xk) =&\ \dsum_{k=0}^{K-1}\dfrac{(1-\lambda \rho_F)t_k}{(1-t_k\rho_r)\lambda^2}\(D_\omega(\xhk,\xk) + D_\omega(\xk, \xhk)\) \\
\leq&\ \benvF(x^0)-\benvF(x^K) +\dsum_{k=0}^{K-1}\dfrac{L_F^2}{2\lambda(1-t_k\rho_r)^2}t_k^2
\end{align*}
Since $t_k\in(0, \lambda]$, we have that $1\geq1-t_k\rho_r\geq 1-\lambda\rho_r$. It follows that
\begin{align*}
(1-\lambda \rho_F)\dsum_{k=0}^{K-1}t_k \dsymomelam(\xhk,\xk)\leq \benvF(x^0)-\benvF(x^K) +\dfrac{L_F^2}{2\lambda(1-\lambda\rho_r)^2}\dsum_{k=0}^{K-1}t_k^2
\end{align*}
Therefore we have
\begin{align*}
\((1-\lambda\rho_F)\dsum_{k=0}^{K-1}t_k\)\min_{k=0}^{K-1}\dsymomelam(\xhk,\xk) \leq \benvF(x^0)-F^*+\dfrac{L_F^2}{2\lambda(1-\lambda\rho_r)^2}\dsum_{k=0}^{K-1}t_k^2
\end{align*}
Dividing both sides by $(1-\lambda\rho_F)\sum_{k=0}^{K-1}t_k$ yields the desired result in \eqref{eq:comp-estimate-ubF}, i.e. the convergence result listed below under \Cref{2assum:rwc} and \ref{2assump:comp-ubF}.
\begin{align*}
\min_{k=0}^{K-1}\dsymomelam(\xhk, \xk) \leq \dfrac{\benvF(x^0)-F^*+\dfrac{L_F^2}{2\lambda(1-\lambda\rho_r)^2}\dsum_{k=0}^{K-1}t_k^2}{(1-\lambda\rho_F)\dsum_{k=0}^{K-1}t_k}
\end{align*}
Next, let $\lambda = \dfrac1{2\rho_F}$ and we will prove \eqref{eq:comp-complexity-ubF} by analyzing the following two cases:
\begin{itemize}
    \item First, assume that there exists some $\delta\geq F_{1/(2\rho_F)}^\omega(x^0)-F^*$ satisfying $\sqrt{\frac{(1-\rho_r/(2\rho_F))^2\delta}{\rho_FL_F^2K}} \leq\frac{1}{2\rho_F} $, then we have that $t_k = \sqrt{\frac{(1-\rho_r/(2\rho_F))^2\delta}{\rho_FL_F^2K}}$ and then 
\begin{align*}
&\min_{0\le k\le K-1}\dsymomelam(\xhk,\xk)\\ &\leq\ \dfrac{\benvF(x^0)-F^*+\dfrac{L_F^2}{2\lambda(1-\lambda\rho_r)^2}\dsum_{k=0}^{K-1}t_k^2}{(1-\lambda\rho_F)\dsum_{k=0}^{K-1}t_k}\le\ \dfrac{\delta + \dfrac{L_F^2\rho_F}{(1-\lambda\rho_r)^2}Kt_k^2}{\frac12Kt_k}= \dfrac{2\delta}{Kt_k} + 2\dfrac{L_F^2\rho_Ft_k}{(1-\frac{\rho_r}{2\rho_F})^2}\\
&= \ 2\sqrt{\dfrac{2\delta}{Kt_k} \cdot 2\dfrac{L_F^2\rho_Ft_k}{(1-\frac{\rho_r}{2\rho_F})^2}} = \dfrac{4L_F}{1-\frac{\rho_r}{2\rho_F}}\sqrt{\dfrac{\rho_F\delta}{K}} \le\ 8L_F\sqrt{\dfrac{\rho_F\delta}{K}}
\end{align*}
where the second inequality and the third equality comes from the choice of $\delta$ and $t_k$, and the fourth inequality is from $\rho_r \le \rho_F\ \Longrightarrow\ 1-\frac{\rho_r}{2\rho_F} \ge \frac{1}{2}$.\\
\item Otherwise, we have $\frac{1}{2\rho_F}\leq \sqrt{\frac{(1-\rho_r/(2\rho_F))^2\delta}{\rho_FL_F^2K}}$. It follows that $t_k = \frac{1}{2\rho_F}$ and $L_F\leq 2(1-\frac{\rho_r}{2\rho_F})\sqrt{\frac{\rho_F\delta}{K}}.$ Thus,
\begin{align*}
\min_{0\le k\le K-1}\dsymomelam(\xhk,\xk) &\leq\ \dfrac{\benvF(x^0)-F^*+\dfrac{L_F^2}{2\lambda(1-\lambda\rho_r)^2}\dsum_{k=0}^{K-1}t_k^2}{(1-\lambda\rho_F)\dsum_{k=0}^{K-1}t_k}\\
&\le\ \dfrac{\delta + \dfrac{L_F^2\rho_F}{(1-\lambda\rho_r)^2}Kt_k^2}{\frac12Kt_k}= \dfrac{2\delta}{Kt_k} + 2\dfrac{L_F^2\rho_Ft_k}{(1-\frac{\rho_r}{2\rho_F})^2}= \dfrac{4\delta\rho_F}{K} + \dfrac{L_F^2}{(1-\frac{\rho_r}{2\rho_F})^2} \\
&\le \dfrac{4\rho_F\delta}{K} + \dfrac{4\rho_F\delta}{K} = 8\dfrac{\rho_F\delta}{K}
\end{align*}
where the third inequality comes from $\frac{1}{2\rho_F}\leq \sqrt{\frac{(1-\rho_r/(2\rho_F))^2\delta}{\rho_FL_F^2K}} \Longrightarrow \frac{L_F^2}{(1-\rho_r/(2\rho_F))^2}\le \frac{4\rho_F\delta}{K}$
\end{itemize}
Therefore we have \begin{align*}
    \min_{0\le k\le K-1}\dsymphic{1/(2\rho_F)}(\xhk, \xk) \leq 8\max\left\{\dfrac{\rho_F\delta}{K}, L_F\sqrt{\dfrac{\rho_F\delta}{K}}\right\}
\end{align*}

\item Combining \ref{2assump:comp-ubme} and \Cref{lem:comp-lem-rec-gF}, we have 
\begin{align}
\benvF(\xkp)-F^*\leq&\   \benvF(\xk)- F^* - \dfrac{t_k(1-\lambda \rho_F)}{(1-t_k\rho_r)}\dsymomelam(\xhk,\xk) \nonumber\\
&\ + \dfrac{t_k^2}{2(1-t_k\rho_r)^2\lambda}\(\alpha_1 (\benvF(\xk)-F^*)+\beta_1\)\label{eq:comp-lem-rec-Fdiff}
\end{align}
Rearranging the terms and using $t_k\in(0,\lambda]$ yields
\begin{align*}
&\benvF(\xkp)-F^* \\
\leq&\   \(1+\dfrac{t_k^2\alpha_1}{2(1-\lambda\rho_r)^2\lambda}\)(\benvF(\xk)- F^*) - \dfrac{t_k(1-\lambda \rho_F)}{(1-t_k\rho_r)}\dsymomelam(\xhk,\xk)
+ \dfrac{t_k^2\beta_1}{2(1-\lambda\rho_r)^2\lambda}.
\end{align*}
Dropping the nonpositive term, recursively applying the preceding inequality, and using $1+s\leq e^s$, we obtain that, for all $0\leq j\leq K$,
\begin{align*}
\benvF(x^j)-F^*
\leq&\ \left(\benvF(x^0)-F^*+\dfrac{\beta_1}{2(1-\lambda\rho_r)^2\lambda}\dsum_{k=0}^{j-1}t_k^2\right)
 e^{\frac{\alpha_1}{2(1-\lambda\rho_r)^2\lambda}\sum_{k=0}^{j-1}t_k^2} \\
\leq&\ \left(\benvF(x^0)-F^*+\dfrac{C\beta_1}{2(1-\lambda\rho_r)^2\lambda}\right)
 e^{\frac{C\alpha_1}{2(1-\lambda\rho_r)^2\lambda}}
 =C_F.
\end{align*}
Therefore, for all $0\leq k\leq K$ we have $\benvF(\xk) - F^* \leq C_F.$
Then it follows from \eqref{eq:comp-lem-rec-Fdiff} that
\begin{align}
\dsum_{k=0}^{K-1}\dfrac{1-\lambda \rho_F}{1-t_k\rho_r}t_k \dsymomelam(\xhk,\xk)\leq \benvF(x^0)-\benvF(x^K) +\dsum_{k=0}^{K-1}\dfrac{(\alpha_1C_F+\beta_1)}{2\lambda(1-t_k\rho_r)^2}t_k^2 \label{eq:2pf_main_thm_1}
\end{align}
Similar to the proof in part (a), we obtain the desired inequality \eqref{eq:comp-estimate-ubME}.

Next, we will show that \eqref{eq:comp-complexity-ubME} holds. Let $\lambda = \frac{1}{2\rho_F}$ and $C=\Cbar$. Since $Kt_k^2\leq\Cbar,$
we have
\begin{align*}
C_F\leq&\left(\delta+\frac{\Cbar\beta_1\rho_F}{\left(1-\frac{\rho_r}{2\rho_F}\right)^2}\right)
e^{\frac{\Cbar\alpha_1\rho_F}{\left(1-\frac{\rho_r}{2\rho_F}\right)^2}}
\leq\left(\delta+4\Cbar\beta_1\rho_F\right)e^{4\Cbar\alpha_1\rho_F}\triangleq\Cbar_F.
\end{align*}
It follows from \eqref{eq:comp-estimate-ubME} that
\begin{align}
\min_{0\le k\le K-1}\dsymomelam(\xhk,\xk)
\leq\frac{2\delta}{Kt_k}+\frac{2\rho_F(\alpha_1\Cbar_F+\beta_1)}{\left(1-\frac{\rho_r}{2\rho_F}\right)^2}t_k.
\label{eq:comp-ubME-three-cases}
\end{align}
We consider the following three cases.
\begin{itemize}
\item First, suppose that $t_k=\frac{1}{2\rho_F}.$
According to the choice of $t_k$, we have
\begin{align*}
\frac{\alpha_1\Cbar_F+\beta_1}
{\left(1-\frac{\rho_r}{2\rho_F}\right)^2}
\leq
\frac{4\rho_F\delta}{K}.
\end{align*}
It follows from \eqref{eq:comp-ubME-three-cases} that
\begin{align*}
\min_{0\le k\le K-1}\dsymomelam(\xhk,\xk)
\leq
\frac{4\rho_F\delta}{K}
+
\frac{\alpha_1\Cbar_F+\beta_1}
{\left(1-\frac{\rho_r}{2\rho_F}\right)^2}
\leq
\frac{8\rho_F\delta}{K}.
\end{align*}

\item Next, suppose that
\begin{align*}
t_k
=
\sqrt{
\frac{
\left(1-\frac{\rho_r}{2\rho_F}\right)^2\delta
}{
\rho_F(\alpha_1\Cbar_F+\beta_1)K
}
}.
\end{align*}
Then \eqref{eq:comp-ubME-three-cases} gives
\begin{align*}
\min_{0\le k\le K-1}\dsymomelam(\xhk,\xk)
\leq&
\frac{4}
{1-\frac{\rho_r}{2\rho_F}}
\sqrt{
\frac{
(\alpha_1\Cbar_F+\beta_1)\rho_F\delta}{K}}
\leq8\sqrt{\frac{(\alpha_1\Cbar_F+\beta_1)\rho_F\delta}{K}}.
\end{align*}

\item Finally, suppose that $t_k=\sqrt{\dfrac{\Cbar}{K}}.$
According to the choice of $t_k$, we have
\begin{align*}
\frac{\rho_F(\alpha_1\Cbar_F+\beta_1)}
{\left(1-\frac{\rho_r}{2\rho_F}\right)^2}
\leq
\frac{\delta}{\Cbar}.
\end{align*}
It follows from \eqref{eq:comp-ubME-three-cases} that
\begin{align*}
\min_{0\le k\le K-1}\dsymomelam(\xhk,\xk)
\leq
\frac{4\delta}{\sqrt{\Cbar K}}.
\end{align*}
\end{itemize}
Combining the above three cases, we obtain
\begin{align*}
\min_{0\le k\le K-1}\dsymomelam(\xhk,\xk)
\leq
8\max\left\{
\frac{\rho_F\delta}{K},
\sqrt{
\frac{
(\alpha_1\Cbar_F+\beta_1)\rho_F\delta
}{K}
},
\frac{\delta}{2\sqrt{\Cbar K}}
\right\},
\end{align*}
which proves \eqref{eq:comp-complexity-ubME}.

\item By \Cref{lem:comp-lem-rec-gf}, we have
\begin{align}
&\quad\ \sum_{k=0}^{K-1}\frac{t_k(1-\lambda\rho_F)}{1-t_k\rho_r}\dsymomelam(\xhk,\xk)\nonumber\\
&\leq\benvF(x^0)-\benvF(x^K)+\sum_{k=0}^{K-1}\frac{t_k}{\lambda(1-t_k\rho_r)}\bigl(F(x^k)-F(x^{k+1})\bigr)+\sum_{k=0}^{K-1}\frac{2t_k^2L_f^2}{\lambda(1-t_k\rho_r)}.
\label{eq:comp-gf-summed}
\end{align}
Define $a_k:=\dfrac{t_k}{\lambda(1-t_k\rho_r)}.$
Since $\{t_k\}$ is nonincreasing, $\{a_k\}$ is also
nonincreasing. Therefore,
\begin{align}
&\sum_{k=0}^{K-1}
a_k\bigl(F(x^k)-F(x^{k+1})\bigr)
\nonumber\\
&=a_0\bigl(F(x^0)-F^*\bigr)+\sum_{k=1}^{K-1}
(a_k-a_{k-1})\bigl(F(x^k)-F^*\bigr)-a_{K-1}\bigl(F(x^K)-F^*\bigr)
\nonumber\\
&\leq
a_0\bigl(F(x^0)-F^*\bigr).
\end{align}
Using $\benvF(x^K)\geq F^*$ in
\eqref{eq:comp-gf-summed} and dividing by $\displaystyle\sum_{k=0}^{K-1}
\frac{t_k(1-\lambda\rho_F)}{1-t_k\rho_r}$
proves \eqref{eq:comp-estimate-ubf}.

For the constant-step result, let
$\lambda=1/(2\rho_F)$ and $t=
\min\left\{\frac{1}{2\rho_F},\frac{1}{2L_f}\sqrt{\frac{\delta}{\rho_FK}}\right\}.$
The constant-step specialization gives
\begin{align}
\min_{0\leq k\leq K-1}
\dsymphic{1/(2\rho_F)}(\xhk,\xk)
\leq
\frac{2\delta}{Kt}
+
\frac{4\rho_F\delta}{K}
+
8\rho_FtL_f^2.
\end{align}
If $t=1/(2\rho_F)$, then
$L_f^2\leq\rho_F\delta/K$, and the right-hand side is at most
\begin{align}
    \frac{12\rho_F\delta}{K}.
\end{align}
Otherwise,
\begin{align}
t=
\frac{1}{2L_f}
\sqrt{\frac{\delta}{\rho_FK}},
\end{align}
and the right-hand side is at most
\begin{align}
8L_f\sqrt{\frac{\rho_F\delta}{K}}
+
\frac{4\rho_F\delta}{K}.
\end{align}
This proves \eqref{eq:comp-complexity-ubf}.
		
\end{enumerate}
\end{proof}

\section{Proof of Results in Section \ref{sec:BSMM}}
\subsection{Proof of \Cref{lem:model-exp-rwc}}\label{sec:pf_of_3lem_1}
\begin{proof}
    Let $x,y\in\rr^d$ and $\lambda\in[0,1]$. Denote that $\xbar\triangleq \lambda x + (1-\lambda)y$. 
    Then, by \ref{asp:model-unbiased} and the convexity of $\smf{\xbar}(\cdot;\xi) + r(\cdot) + \rho\omega(\cdot)$ from \ref{asp:model-rwc}, we have 
    \begin{align*}
        & f(\xbar) + r(\xbar) + (\tau+\rho)\omega(\xbar) \\
        = &\, \E_{\xi} [\smf{\xbar}(\xbar;\xi) + r(\xbar) + \rho\omega(\xbar)] + \tau\omega(\xbar)\\
        \leq &\, 
        \lambda\E_{\xi} [\smf{\xbar}(x;\xi) + r(x)+\rho \omega(x)] + (1-\lambda)\E_{\xi} [\smf{\xbar}(y;\xi)+ r(y)+\rho \omega(y)]  + \tau\omega(\xbar) \\
        = &\, \lambda \(f(x) +  r(x)+\rho \omega(x)\) + (1-\lambda)\(f(y) + r(y)+\rho \omega(y)\) \\
        & + \lambda\E_{\xi} [\smf{\xbar}(x;\xi) - f(x)] + (1-\lambda)\E_{\xi} [\smf{\xbar}(y;\xi)-f(y)] + \tau\omega(\xbar).
    \end{align*}
    Invoking \ref{asp:model-unbiased} again, we have
    \begin{align*}
        \lambda\E_{\xi} [\smf{\xbar}(x;\xi) - f(x)] + (1-\lambda)\E_{\xi} [\smf{\xbar}(y;\xi)-f(y)] \leq\lambda\tau D_\omega(x, \xbar) + (1-\lambda)\tau D_\omega(y,\xbar).
    \end{align*}
    It follows that
    \begin{align*}
        & f(\xbar) + r(\xbar) + (\tau+\rho)\omega(\xbar) \\
        \leq &\, \lambda \(f(x) +  r(x)+\rho \omega(x)\) + (1-\lambda)\(f(y) + r(y)+\rho \omega(y)\)\\
        & + \lambda\tau D_\omega(x, \xbar) + (1-\lambda)\tau D_\omega(y,\xbar)+ \tau\omega(\xbar) \\
        = &\, \lambda \(f(x) +  r(x)+ (\rho+\tau)\omega(x)\) + (1-\lambda)\(f(y) + r(y)+(\rho+\tau)\omega(y)\),
    \end{align*}
    which shows that $f(\cdot)+r(\cdot) + (\tau+\rho)\omega(\cdot)$ is convex. Therefore, the function $f(\cdot)+r(\cdot)$ is $(\tau+\rho)\omega$-relatively weakly convex.
\end{proof}

\subsection{Proof of \Cref{3thm:main}}\label{sec:pf_of_3thm_main}
\begin{proof}
    Taking full expectation in \eqref{eq:model-recursion}, we obtain
    \begin{align}\label{eq:model-thm-recursion}
    \frac{t_k(1-\lambda(\tau+\rho))}{1-t_k\rho}\E[\dsymomelam(\xhk,\xk)]\leq&\ \E[\benvF(\xk)]-\E[\benvF(\xkp)]+\frac{t_k}{\lambda(1-t_k\rho)}\left(\E[F(\xk)]-\E[F(\xkp)]\right)\nonumber\\
    &+\frac{t_k^2}{2\lambda(1-t_k\rho)(1-t_k\tau)}\left(\E[L^2(\xkp)]+\E[L^2(\xk)]\right).
    \end{align}
    \begin{enumerate}
        \item Under \ref{asp:model-lip-f}, \eqref{eq:model-thm-recursion} gives
        \begin{align*}
        \frac{t_k(1-\lambda(\tau+\rho))}{1-t_k\rho}\E[\dsymomelam(\xhk,\xk)]\leq&\ \E[\benvF(\xk)]-\E[\benvF(\xkp)]+\frac{t_k}{\lambda(1-t_k\rho)}\left(\E[F(\xk)]-\E[F(\xkp)]\right)\\
        &+\frac{t_k^2L_f^2}{\lambda(1-t_k\rho)(1-t_k\tau)}.
        \end{align*}
        Since $\{t_k\}_{k\geq0}$ is nonincreasing, we have
        \begin{align*}
        &\sum_{k=0}^{K-1}\frac{t_k}{\lambda(1-t_k\rho)}\left(\E[F(\xk)]-\E[F(\xkp)]\right)\\
        =&\ \frac{t_0}{\lambda(1-t_0\rho)}\left(F(x^0)-F^*\right)+\sum_{k=1}^{K-1}\left(\frac{t_k}{\lambda(1-t_k\rho)}-\frac{t_{k-1}}{\lambda(1-t_{k-1}\rho)}\right)\left(\E[F(\xk)]-F^*\right)\\
        &-\frac{t_{K-1}}{\lambda(1-t_{K-1}\rho)}\left(\E[F(x^K)]-F^*\right)\\
        \leq&\ \frac{t_0}{\lambda(1-t_0\rho)}\left(F(x^0)-F^*\right).
        \end{align*}
        Summing the one-step inequality with respect to $k$ and using $\E[\benvF(x^K)]\geq F^*$, we obtain
        \begin{align*}
        &(1-\lambda(\tau+\rho))\sum_{k=0}^{K-1}t_k\E[\dsymomelam(\xhk,\xk)]\\
        \leq&\ \benvF(x^0)-F^*+\frac{t_0}{\lambda(1-t_0\rho)}\left(F(x^0)-F^*\right)+\frac{L_f^2}{\lambda}\sum_{k=0}^{K-1}\frac{t_k^2}{(1-t_k\rho)(1-t_k\tau)}.
        \end{align*}
        Applying the sampling rule in \Cref{alg:bsmm} proves \eqref{eq:model-thm-estimate}.

        Next, let $\lambda=1/(2(\tau+\rho))$ and $t_k=c/\sqrt K$. Since $c\leq\min\{1/(2\rho),1/(2\tau)\}$, we have $1-t_k\rho\geq1/2$ and $1-t_k\tau\geq1/2$. Therefore, \eqref{eq:model-thm-estimate} gives
        \begin{align*}
        \E[\dsymphic{1/(2(\tau+\rho))}(\xhat^{\kbar},\xkbar)]\leq\frac{2}{c\sqrt K}\left(\delta+\frac{4(\tau+\rho)c\delta}{\sqrt K}+8(\tau+\rho)L_f^2c^2\right),
        \end{align*}
        which proves \eqref{eq:model-thm-complexity-ubf}.

        \item Under \ref{asp:model-lip-ME}, \eqref{eq:model-thm-recursion} gives
        \begin{align}\label{eq:model-thm-recursion-ME}
        &\quad\ \frac{t_k(1-\lambda(\tau+\rho))}{1-t_k\rho}\E[\dsymomelam(\xhk,\xk)]\nonumber\\
        \leq&\ \E[\benvF(\xk)-F^*]-\E[\benvF(\xkp)-F^*]
        +\frac{t_k}{\lambda(1-t_k\rho)}\left(\E[F(\xk)]-\E[F(\xkp)]\right)\nonumber\\
        &+\frac{\alpha_1t_k^2}{2\lambda(1-t_k\rho)(1-t_k\tau)}\left(\E[\benvF(\xkp)-F^*]+\E[\benvF(\xk)-F^*]\right)+\frac{\beta_1t_k^2}{\lambda(1-t_k\rho)(1-t_k\tau)}.
        \end{align}
        For any $1\leq j\leq K$, summing \eqref{eq:model-thm-recursion-ME} from $k=0$ to $j-1$, dropping the nonnegative stationarity term, and using the same summation-by-parts argument as in part (1), we obtain
        \begin{align*}
        \E[\benvF(x^j)-F^*]\leq&\ \benvF(x^0)-F^*+\frac{t_0}{\lambda(1-t_0\rho)}\left(F(x^0)-F^*\right)\\
        &+\frac{\alpha_1}{\lambda}\sum_{k=0}^{j-1}\frac{t_k^2}{(1-t_k\rho)(1-t_k\tau)}\max_{0\leq i\leq j}\E[\benvF(x^i)-F^*]\\
        &+\frac{\beta_1}{\lambda}\sum_{k=0}^{j-1}\frac{t_k^2}{(1-t_k\rho)(1-t_k\tau)}.
        \end{align*}
        Taking the maximum with respect to $j$, we have
        \begin{align*}
        \max_{0\leq j\leq K}\E[\benvF(x^j)-F^*]\leq\frac{\benvF(x^0)-F^*+\dfrac{t_0}{\lambda(1-t_0\rho)}\left(F(x^0)-F^*\right)+\dfrac{\beta_1}{\lambda}\sum_{k=0}^{K-1}\dfrac{t_k^2}{(1-t_k\rho)(1-t_k\tau)}}{1-\dfrac{\alpha_1}{\lambda}\sum_{k=0}^{K-1}\dfrac{t_k^2}{(1-t_k\rho)(1-t_k\tau)}}.
        \end{align*}
        Summing \eqref{eq:model-thm-recursion-ME} from $k=0$ to $K-1$ and substituting the preceding bound, we obtain
        \begin{align*}
        &(1-\lambda(\tau+\rho))\sum_{k=0}^{K-1}t_k\E[\dsymomelam(\xhk,\xk)]\\
        \leq&\ \frac{\benvF(x^0)-F^*+\dfrac{t_0}{\lambda(1-t_0\rho)}\left(F(x^0)-F^*\right)+\dfrac{\beta_1}{\lambda}\sum_{k=0}^{K-1}\dfrac{t_k^2}{(1-t_k\rho)(1-t_k\tau)}}{1-\dfrac{\alpha_1}{\lambda}\sum_{k=0}^{K-1}\dfrac{t_k^2}{(1-t_k\rho)(1-t_k\tau)}}.
        \end{align*}
        Applying the sampling rule in \Cref{alg:bsmm} proves \eqref{eq:model-thm-estimate-ME}.

        Finally, let $\lambda=1/(2(\tau+\rho))$ and $t_k=c/\sqrt K$. Since $c\leq\min\{1/(2\rho),1/(2\tau)\}$, we have
        \begin{align*}
        \sum_{k=0}^{K-1}\frac{t_k^2}{(1-t_k\rho)(1-t_k\tau)}\leq4c^2.
        \end{align*}
        Therefore, \eqref{eq:model-thm-estimate-ME} gives
        \begin{align*}
        \E[\dsymphic{1/(2(\tau+\rho))}(\xhat^{\kbar},\xkbar)]\leq\frac{2}{c\sqrt K}\frac{\delta+\dfrac{4(\tau+\rho)c\delta}{\sqrt K}+8(\tau+\rho)\beta_1c^2}{1-8\alpha_1(\tau+\rho)c^2},
        \end{align*}
        which proves \eqref{eq:model-thm-complexity-ME}.
    \end{enumerate}
\end{proof}

\subsection{Proof of \Cref{lem:model-recursion}}\label{sec:pf_of_3lem_2}
\begin{proof}
    From the optimality condition of \eqref{eq:3subpro} and the $\rho\omega$-RWC of $\sfxk(\cdot;\xik)+r(\cdot)$, we have
    \begin{align*}
    &\frac1{t_k}\left(D_\omega(\xhk,\xkp)-D_\omega(\xhk,\xk)+D_\omega(\xkp,\xk)\right)\\
    \leq&\ \sfxk(\xhk;\xik)+r(\xhk)-\sfxk(\xkp;\xik)-r(\xkp)+\rho D_\omega(\xhk,\xkp).
    \end{align*}
    Taking conditional expectation and adding and subtracting the true objective, we obtain
    \begin{align*}
    &\frac1{t_k}\E_k\left[D_\omega(\xhk,\xkp)-D_\omega(\xhk,\xk)+D_\omega(\xkp,\xk)\right]\\
    \leq&\ F(\xhk)-\E_k[F(\xkp)]+\E_k[\sfxk(\xhk;\xik)-f(\xhk)]\\
    &+\E_k[f(\xkp)-\sfxk(\xkp;\xik)]+\rho\E_k[D_\omega(\xhk,\xkp)].
    \end{align*}
    By \ref{asp:model-unbiased}, we have
    \begin{align*}
    \E_k[\sfxk(\xhk;\xik)-f(\xhk)]\leq\tau D_\omega(\xhk,\xk).
    \end{align*}
    Moreover, applying \eqref{eq:bregman-prox-gap-stationarity} to $F$, we have
    \begin{align*}
    F(\xhk)-F(\xk)+\tau D_\omega(\xhk,\xk)\leq-\lambda(1-\lambda(\tau+\rho))\dsymomelam(\xhk,\xk)-\rho D_\omega(\xhk,\xk).
    \end{align*}
    Let $\xi\sim P$ be independent of $\{\xi_k\}_{k=0}^{K-1}$. By \ref{asp:model-unbiased} and \ref{asp:model-lip}, we obtain
    \begin{align*}
    &\quad\ \E_k[f(\xkp)-\sfxk(\xkp;\xik)]\nonumber\\
    &\leq\E_k\left[\left(L(\xkp)+\sml(\xk;\xik)\right)\sqrt{\min\{D_\omega(\xkp,\xk),D_\omega(\xk,\xkp)\}}+\tau\min\{D_\omega(\xkp,\xk),D_\omega(\xk,\xkp)\}\right].
    \end{align*}
    Since $t_k\tau<1$, the Cauchy--Schwarz inequality and Young's inequality yield
    \begin{align*}
    t_k\E_k[f(\xkp)-\sfxk(\xkp;\xik)]-\E_k[D_\omega(\xkp,\xk)]
    \leq\frac{t_k^2}{2(1-t_k\tau)}\left(\E_k[L^2(\xkp)]+L^2(\xk)\right).
    \end{align*}
    Combining the above inequalities, we obtain
    \begin{align*}
    &(1-t_k\rho)\E_k\left[D_\omega(\xhk,\xkp)-D_\omega(\xhk,\xk)\right]\\
    \leq&\ -t_k\lambda(1-\lambda(\tau+\rho))\dsymomelam(\xhk,\xk)+t_k\left(F(\xk)-\E_k[F(\xkp)]\right)+\frac{t_k^2}{2(1-t_k\tau)}\left(\E_k[L^2(\xkp)]+L^2(\xk)\right).
    \end{align*}
    Since $\xhk$ is a feasible point in the definition of $\benvF(\xkp)$, we have
    \begin{align*}
    \E_k[\benvF(\xkp)]-\benvF(\xk)\leq\frac1\lambda\E_k\left[D_\omega(\xhk,\xkp)-D_\omega(\xhk,\xk)\right].
    \end{align*}
    Combining the above two inequalities and rearranging the terms proves \eqref{eq:model-recursion}.
\end{proof}

\section{Proof of Results in Section \ref{sec:SBMM}}

\subsection{Proof of \Cref{thm:model-sbmm}}
\begin{proof}
    By \Cref{lem:model-sb-1} and the definition of $\Gamma$, we have
    \begin{align*}
    \E_k[\benvF(x^{k+1})]\leq\benvF(x^k)-\Gamma\dsymomelam(\hat x^k,x^k)+\frac{t^2L^2}{4\lambda c(m)(1-t\rho)}.
    \end{align*}
    Taking full expectation, summing with respect to $k$, and using $\E[\benvF(x^K)]\geq F^*$, we obtain
    \begin{align*}
    \Gamma\sum_{k=0}^{K-1}\E[\dsymomelam(\hat x^k,x^k)]\leq\benvF(x^0)-F^*+\frac{Kt^2L^2}{4\lambda c(m)(1-t\rho)}.
    \end{align*}
    Since $t_k\equiv t$, the sampling rule in \Cref{alg:sbmm} is uniform. Therefore,
    \begin{align*}
    \E[\dsymomelam(\hat x^{\bar k},x^{\bar k})]=\frac1K\sum_{k=0}^{K-1}\E[\dsymomelam(\hat x^k,x^k)].
    \end{align*}
    Dividing the preceding inequality by $\Gamma K$ proves \eqref{eq:sbmm_final_bound_explicit}.

    Next, let $t=\frac{1}{\rho\sqrt K}$. From the condition on $1-c(m)$ in \Cref{thm:model-sbmm}, we have
    \begin{align*}
    \Gamma\geq\frac{t(1-\lambda(\rho+\tau))}{2c(m)(1-t\rho)}.
    \end{align*}
    Substituting this inequality into \eqref{eq:sbmm_final_bound_explicit}, we obtain
    \begin{align*}
    \E[\dsymomelam(\hat x^{\bar k},x^{\bar k})]\leq\frac{2c(m)(1-t\rho)(\benvF(x^0)-F^*)}{Kt(1-\lambda(\rho+\tau))}+\frac{tL^2}{2\lambda(1-\lambda(\rho+\tau))}.
    \end{align*}
    Using $c(m)\leq1$, $1-t\rho\leq1$, and $t=\frac{1}{\rho\sqrt K}$ proves \eqref{eq:sbmm_rate_O1sqrtK_explicit}.
\end{proof}

\subsection{Proof of \Cref{lem:model-sb-1}}
\begin{proof}
At $k$th iteration, by the optimality condition, we obtain that
\begin{align*}
    0\in \partial \sFxk(\xkp; \Bcal_k) +\frac{1}{t_k}\left(\snome(\xkp;\Bcal_k)-\snome(\xk;\Bcal_k)\right).
\end{align*} 
By the $\rho\some(\cdot;\Bcal_k)$-relatively weak convexity of $\sFxk (\cdot; \Bcal_k) = \sfxk(\cdot;\Bcal_k) + r(\cdot)$, we derive that for any $x\in\rr^d$,
\begin{align*}
    \sfxk(x;\Bcal_k) + r(x) 
    \geq&\ \sfxk(\xkp;\Bcal_k) +
    r(\xkp) + \dfrac{1}{t_k}\la \snome(x^k; \Bcal_k) - \snome(\xkp; \Bcal_k), x-\xkp \ra \\
    &-\rho \sdome(x, \xkp; \Bcal_k).
\end{align*} 
By the three point identity, we have that
\begin{align*}
    \sfxk(x;\Bcal_k) + r(x) 
    \geq &\ \sfxk(\xkp;\Bcal_k) + r(\xkp)  \\
    & + \dfrac{1}{t_k}\(\sdome(x, \xkp; \Bcal_k) - \sdome(x, \xk; \Bcal_k)+ \sdome(\xkp, \xk; \Bcal_k)\) 
    -\rho \sdome(x, \xkp; \Bcal_k).
\end{align*} 
Rearranging the terms, we obtain that
\begin{align*}
    & \dfrac{1}{t_k}\(\sdome(x, \xkp;\bcalk) - \sdome(x, \xk;\bcalk) + \sdome(\xkp, \xk;\bcalk)\) \\
    \leq &\, f(x) + r(x) - \(f(\xk) +r(\xk)\) +\sfxk(x;\bcalk) - f(x)\\ 
    &  +f(\xk) + r(\xk)- \(\sfxk(\xkp;\bcalk)+r(\xkp)\) + \rho \sdome(x, \xkp;\bcalk).
\end{align*}
Taking expectations in \ref{asp:model-sb-rwc} and using \ref{asp:model-sb-dgf-unbiased} and \ref{asp:model-sb-f}, we obtain that $F=f+r$ is $(\rho+\tau)\omega$-relatively weakly convex. Thus, for $\lambda\in(0,1/(\rho+\tau))$, we define $\xhk\triangleq\bproxF(x^k)=\argmin_y\left\{F(y)+\frac1\lambda D_\omega(y,\xk)\right\}$. Also, taking expectation with respect to $\bcalk\mid\sigma_k$ from both sides and plugging in $x=\xhk$, we derive that
\begin{align}\label{eq:sbmm-lem1-1}
    &\dfrac{1}{t_k}\E_k\[\sdome(\xhk, \xkp;\bcalk) - \sdome(\xhk, \xk;\bcalk) + \sdome(\xkp, \xk;\bcalk)\] \nonumber\\
    \leq &\ \E_k\[F(\xhk) - F(\xk)\] 
    +\E_k\[\sfxk(\xhk;\bcalk) - f(\xhk)\] \\
    & +\E_k\[f(\xk) + r(\xk)- \(\sfxk(\xkp;\bcalk)+r(\xkp)\)\] + \rho\E_k\[ \sdome(\xhk, \xkp; \bcalk)\]. \nonumber
\end{align}
We divide the right hand side into three parts and analyze them separately. For the first term, applying \eqref{eq:bregman-prox-gap-stationarity} to $F$, we have
\begin{align}\label{eq:sbmm-lem1-2}
\E_k\[F(\xhk)-F(\xk)\]\leq-\frac1\lambda D_\omega(\xhk,\xk)-\left(\frac1\lambda-\rho-\tau\right)D_\omega(\xk,\xhk).
\end{align}
Let $\xi\sim P$ be independent of $\Bcal_0,\ldots,\Bcal_{K-1}$. By \ref{asp:model-sb-f}, we have
\begin{align}\label{eq:sbmm-lem1-3}
    \E_k\[\sfxk(\xhk;\bcalk) - f(\xhk)\] \leq \tau D_\omega(\xhk, \xk)
\end{align}
and
\begin{align}\label{eq:sbmm-lem1-4}
    f(\xkp) = \E_\xi\[\sfxkp(\xkp;\xi)\], \qquad
    f(\xk) =\E_k\[\sfxk(\xk;\bcalk)\]
\end{align}
Using \eqref{eq:sbmm-lem1-4} and \Cref{asp:model-sb-lip}, we obtain
\begin{align}\label{eq:sbmm-lem1-5}
    &\E_k\[f(\xk) + r(\xk)- \(\sfxk(\xkp;\bcalk)+r(\xkp)\)\] \nonumber \\
    = &\, \E_k\[\sfxk(\xk;\bcalk) + r(\xk)- \(\sfxk(\xkp;\bcalk)+r(\xkp)\)\]\nonumber \\  
    = &\, \dfrac{1}{m_k}\dsum_{i=1}^{m_k}\E_k\[\sfxk(\xk;\xiki) + r(\xk)- \(\sfxk(\xkp;\xiki)+r(\xkp)\)\]\nonumber \\  
    \leq &\, \dfrac{1}{m_k}\dsum_{i=1}^{m_k}\E_k\[\sml(\xk;\xiki)\sqrt{\sdome(\xkp, \xk;\xiki)} \]\\
    \leq &\, \dfrac{1}{m_k}\dsum_{i=1}^{m_k}\(\frac{t_k}4\E_k\[ \sml^2(\xk;\xiki)\] + \frac{1}{t_k}\E_k\[\sdome(\xkp, \xk;\xiki)\]\) \nonumber \\
    \leq &\,\frac{t_k}4 L^2 + \frac{1}{t_k} \cdot\dfrac{1}{m_k}\dsum_{i=1}^{m_k}\E_k\[\sdome(\xkp, \xk;\xiki)\]
    = \, \frac{t_k}4 L^2 + \frac{1}{t_k}\E_k\[\sdome(\xkp, \xk;\bcalk)\].\nonumber 
\end{align}
Finally, combining \eqref{eq:sbmm-lem1-1}, \eqref{eq:sbmm-lem1-2}, \eqref{eq:sbmm-lem1-3}, and \eqref{eq:sbmm-lem1-5}, we obtain
\begin{align*}
&\quad\ \frac1{t_k}\E_k\[\sdome(\xhk,\xkp;\bcalk)\]-\frac1{t_k}D_\omega(\xhk,\xk)+\frac1{t_k}\E_k\[\sdome(\xkp,\xk;\bcalk)\]\nonumber\\
\leq&-\frac1\lambda D_\omega(\xhk,\xk)-\left(\frac1\lambda-\rho-\tau\right)D_\omega(\xk,\xhk)\\
&+\tau D_\omega(\xhk,\xk)+\frac{t_k}4L^2+\frac1{t_k}\E_k\[\sdome(\xkp,\xk;\bcalk)\]+\rho\E_k\[\sdome(\xhk,\xkp;\bcalk)\].
\end{align*}
Canceling the common term, adding $\rho D_\omega(\xhk,\xk)$ to both sides, and multiplying by $\frac{t_k}{1-t_k\rho}$, we obtain
\begin{align*}
\E_k\[\sdome(\xhk,\xkp;\bcalk)\]-D_\omega(\xhk,\xk)\leq-\frac{t_k\lambda(1-\lambda(\rho+\tau))}{1-t_k\rho}\dsymomelam(\xhk,\xk)+\frac{t_k^2L^2}{4(1-t_k\rho)}.
\end{align*}
By \Cref{asp:model-sb-sbg}, we have
\begin{align*}
c(m_k)\E_k[D_\omega(\xhk,\xkp)]-D_\omega(\xhk,\xk)\leq-\frac{t_k\lambda(1-\lambda(\rho+\tau))}{1-t_k\rho}\dsymomelam(\xhk,\xk)+\frac{t_k^2L^2}{4(1-t_k\rho)}.
\end{align*}
Therefore,
\begin{align*}
&c(m_k)\left(\E_k[D_\omega(\xhk,\xkp)]-D_\omega(\xhk,\xk)\right)\\
\leq&\ -\frac{t_k\lambda(1-\lambda(\rho+\tau))}{1-t_k\rho}\dsymomelam(\xhk,\xk)+(1-c(m_k))D_\omega(\xhk,\xk)+\frac{t_k^2L^2}{4(1-t_k\rho)}.
\end{align*}
Since $D_\omega(\xhk,\xk)\leq\lambda^2\dsymomelam(\xhk,\xk)$, we obtain
\begin{align*}
&c(m_k)\left(\E_k[D_\omega(\xhk,\xkp)]-D_\omega(\xhk,\xk)\right)\\
\leq&\ -\left(\frac{t_k\lambda(1-\lambda(\rho+\tau))}{1-t_k\rho}-\lambda^2(1-c(m_k))\right)\dsymomelam(\xhk,\xk)+\frac{t_k^2L^2}{4(1-t_k\rho)}.
\end{align*}
Since $\xhk$ is feasible in the definition of $\benvF(\xkp)$, we have
\begin{align*}
\E_k[\benvF(\xkp)]-\benvF(\xk)\leq\frac1\lambda\E_k[D_\omega(\xhk,\xkp)-D_\omega(\xhk,\xk)].
\end{align*}
Combining the preceding two inequalities proves \eqref{eq:lem_model_sb_1}.
\end{proof}

\end{document}